\newtheorem{theorem}{Theorem}[section]
\newtheorem{lemma}[theorem]{Lemma}
\newtheorem{proposition}[theorem]{Proposition}
\newtheorem{corollary}[theorem]{Corollary}
\newtheorem{conjecture}[theorem]{Conjecture}
\theoremstyle{definition}
\newtheorem{example}[theorem]{Example}
\newtheorem{remark}[theorem]{Remark}
\newenvironment{proofmonspan}{\noindent {\bf Proof of Theorem \ref{monomial span}:}}{\qed \par}
\newcommand{\excise}[1]{}
\newcommand{\Spec}{\operatorname{Spec}}
\newcommand{\Proj}{\operatorname{Proj}}
\newcommand{\supp}{\operatorname{Supp}}
\renewcommand{\Im}{\operatorname{Im}}
\newcommand{\Ker}{\operatorname{Ker}}
\newcommand{\codim}{\operatorname{codim}}
\renewcommand{\dim}{\operatorname{dim}}
\newcommand{\Ann}{\operatorname{Ann}}
\newcommand{\Sym}{\operatorname{Sym}}
\newcommand{\rk}{\operatorname{rk}}
\renewcommand{\and}{\qquad\text{and}\qquad}
\newcommand{\Z}{\mathbb{Z}}
\newcommand{\Q}{\mathbb{Q}}
\newcommand{\N}{\mathbb{N}}
\newcommand{\R}{\mathbb{R}}
\newcommand{\C}{\mathbb{C}}
\newcommand{\la}{\leftarrow}
\newcommand{\mmod}{/\!\!/}
\renewcommand{\la}{\lambda}
\newcommand{\cs}{{\C^\times}}
\renewcommand{\a}{\alpha}
\renewcommand{\b}{\beta}
\newcommand{\Hom}{\operatorname{Hom}}
\newcommand{\becircled}{\mathaccent "7017}
\newcommand{\bS}{\mathbb{S}}
\newcommand{\nc}{\newcommand}
\nc{\cq}{/\!\!/}
\nc{\red}{{U}}
\nc{\zred}{{U_\fz}}
\nc{\sred}{{\cal U}}
\nc{\szred}{{\cal U}_{\fz}}
\nc{\sEnd}{{\cal E}\mathit{nd}}
\nc{\sHom}{{\cal H}\mathit{om}}
\nc{\Pic}{\operatorname{Pic}}
\nc{\cone}[1]{\text{\textswab C}(#1)}
\nc{\coneZ}[1]{\text{\textswab C}_\Z(#1)}
\newcommand{\fz}{{\mathfrak{z}}}
\nc{\Gd}{G'}
\newcommand{\End}{\operatorname{End}}
\nc{\Ree}{\EuScript{R}}
\nc{\muq}{\boldsymbol{\mu}}
\renewcommand{\th}{\theta}
\newcommand{\ith}{i^\text{th}}
\newcommand{\Lie}{\operatorname{Lie}}
\newcommand{\tX}{\tilde{X}}
\renewcommand{\L}{\Lambda}
\newcommand{\hL}{\hat{\Lambda}}
\newcommand{\QH}{QH^*_\bG(\tX; \C)}
\newcommand{\QHT}{QH^*_{\bT}(\tX; \C)}
\newcommand{\QHpol}{\QH_{\operatorname{pol}}}
\newcommand{\QHTpol}{\QHT_{\operatorname{pol}}}
\newcommand{\QHrat}{\QH_{\operatorname{rat}}}
\newcommand{\QHloc}{\QH_{\operatorname{loc}}}
\newcommand{\IH}{I\! H^*_\bG(X; \C)}
\newcommand{\IHT}{I\! H^*_\bT(X; \C)}
\newcommand{\IHz}{I\! H^*(X; \C)}
\newcommand{\mm}{\mathfrak{m}}
\newcommand{\init}{\operatorname{in}}
\newcommand{\SRind}{\SR_{\operatorname{ind}}}
\newcommand{\SRbc}{SR_{\operatorname{bc}}}
\newcommand{\SR}{SR}
\newcommand{\OT}{OT}
\newcommand{\OTh}{\OT_\hbar}
\newcommand{\Ih}{I_\hbar}
\newcommand{\Hilb}{\operatorname{Hilb}}
\newcommand{\bG}{\mathbf{G}}
\newcommand{\bT}{\mathbf{T}}
\newcommand{\Cbar}{\overline{C}}
\newcommand{\imin}{j_{\operatorname{max}}}
\newcommand{\fch}{f_C}
\newcommand{\fcz}{f_{C,0}}
\begin{document}
\spacing{1.5}
\noindent{\Large\bf Intersection cohomology and quantum cohomology\\
of conical symplectic resolutions}
\bigskip

\spacing{1.2}
\noindent{\bf Michael McBreen}\\
Department of Mathematics, \'Ecole Polytechnique F\'ed\'erale de Lausanne, CH-1015 Lausanne\\

\noindent{\bf Nicholas Proudfoot}\footnote{Supported by NSF grant DMS-0950383.}\\
Department of Mathematics, University of Oregon,
Eugene, OR 97403
\\
{\small
\begin{quote}
\noindent {\em Abstract.}
For any conical symplectic resolution, we give a conjecture relating the intersection cohomology
of the singular cone to the quantum cohomology of its resolution.  We prove this conjecture for
hypertoric varieties, recovering the ring structure on hypertoric intersection cohomology that was
originally constructed by Braden and the second author.
\end{quote} }

\section{Introduction}
Let $\tX$ be a conical symplectic resolution of $X$; examples include the Springer resolution, Hilbert schemes
of points on ALE spaces,
quiver varieties, hypertoric varieties, and transverse slices to Schubert varieties in the affine Grassmannian.
The purpose of this paper is to state a conjectural relationship between the intersection cohomology of $X$
and the quantum cohomology of $\tX$ (Conjecture \ref{main}), and to prove this conjecture for hypertoric varieties
(Theorem \ref{hypertoric main}).

Before describing the conjecture itself, we say a few words about the significance of the two sides.
Intersection cohomology groups of quiver varieties were shown by Nakajima to coincide with multiplicity
spaces of simple modules in standard modules over a specialized quantum loop algebra \cite[3.3.2 \& 14.3.10]{Na}.
The equivariant intersection cohomology of a hypertoric variety is isomorphic to the Orlik-Terao algebra of a hyperplane
arrangement \cite[4.5]{TP08}, which has been the subject of much recent study \cite{Terao-OT,PS,
ST-OT,Schenck-OT,DR-OT,DGS-OT,Dinh-OT,Liu-OT}.  The equivariant intersection cohomology groups of
slices in the affine Grassmannian, with the equivariant parameters specialized to generic values, are isomorphic
via the geometric Satake correspondence to weight spaces of simple representations for the Langlands dual group
\cite[3.11 \& 5.2]{G-GS}.

On the quantum cohomology side, Okounkov and Pandharipande studied the 
Hilbert scheme of points in the plane \cite{OP}, and Maulik and Oblomkov studied more generally
the Hilbert scheme of points on an ALE space of type A \cite{Maulik-Oblomkov}.
Braverman, Maulik, and Okounkov computed the quantum cohomology
of the Springer resolution \cite{BMO} and gave some indication of how to proceed for arbitrary conical symplectic resolutions.
This program was carried out for quiver varieties by Maulik and Okounkov \cite{QGQC}, who relate their quantum cohomology
to the representation theory of the Yangian, and for hypertoric varieties by Shenfeld and the first author \cite{McBS}.
This last paper gives an explicit generators-and-relations presentation of the hypertoric quantum cohomology ring, which is a large
part of what makes the hypertoric case of our conjecture more tractable than the others. 

Our conjecture very roughly says that the intersection cohomology of $X$ is isomorphic to the quantum cohomology
of $\tX$ specialized at $q=1$.  Of course, this cannot quite be correct as stated.  The first problem is that
quantum cohomology is an algebra over power series, not polynomials, so it does not make sense to set $q$ equal to 1.
We address this problem simply by working with the subalgebra of quantum cohomology generated by ordinary cohomology
and polynomials in $q$, which is tautologically an algebra over polynomials in $q$.  The second problem is that we
would expect any specialization of the quantum parameters to have the same dimension as the cohomology of $\tX$,
and the intersection cohomology of $X$ is strictly smaller than that.  Indeed, the actual statement involves
taking a quotient of this specialization by the annihilator of $\hbar$, where $\hbar$ is the equivariant parameter for
the conical action of the multiplicative group.  A precise formulation of the conjecture appears in Section \ref{sec:statement}.
We work out the example of $T^*\mathbb{P}^1$ in explicit detail, and give a heuristic reason why we would expect the conjecture
to hold in general (Example \ref{basic example}).

One of the interesting consequences of our conjecture would be that the intersection cohomology of $X$ inherits
a ring structure from the quantum cohomology of $\tX$.  As mentioned above, the intersection cohomology of
a hypertoric variety is already known to have a natural ring structure by work of Braden and the second author \cite{TP08}.
However, the techniques in that paper were very combinatorial, and it was never adequately explained
why such a structure should exist.  We regard the proof of our conjecture for hypertoric varieties as an explanation
of where this mysterious ring structure comes from.  See Section \ref{sec:BP} for a more detailed discussion of the relationship
between our results and those of \cite{TP08}.  For other conical symplectic resolutions, our (conjectural) ring structure on
the intersection cohomology of $X$ appears to be new.  In particular, when $X$ is a slice in the affine Grassmannian,
our conjecture posits the existence of a natural ring structure on a weight space of an irreducible representation of the
Langlands dual group.  This may be related to the ring structure on an entire irreducible representation constructed by
Feigin, Frenkel, and Rybnikov \cite{FFR} (Remark \ref{ring structure}).

Section \ref{sec:statement} is devoted to the statement of our conjecture, while the remainder of the paper is 
dedicated to the proof in the hypertoric case.  The proof involves two technical results about Orlik-Terao
algebras that we believe may be of independent interest, and we therefore placed them in an appendix that can
be read independently from the rest of the paper.

\vspace{\baselineskip}
\noindent
{\em Acknowledgments:}
The authors would like to thank Roman Bezrukavnikov, 
Tom Braden, Joel Kamnitzer, Davesh Maulik, Andrei Okounkov, and Daniel Shenfeld
for helpful conversations.

\section{Statement of the conjecture}\label{sec:statement}
The purpose of this section is to make the necessary definitions to state our conjecture.

\subsection{Conical symplectic resolutions}\label{PCSR}
Let $(\tX,\omega)$ be a symplectic variety equipped with an action of $\cs$, and let $X = \Spec\C[\tX]$.
We say that $\tX$ is a {\bf conical symplectic resolution} if $\cs$ acts on $\omega$ with positive weight,
$\C[\tX]$ is non-negatively graded with only the constants in degree zero,
and the natural map from $\tX$ to $X$ is a projective resolution of singularities.
Examples of conical symplectic resolutions include the following:
\begin{itemize}
\item $\tX$ is a crepant resolution of $X = \C^2/\Gamma$, where $\Gamma$
is a finite subgroup of $\operatorname{SL}(2;\C)$.
The action of $\cs$ is induced by the inverse of the diagonal action on $\C^2$.
\item $\tX$ is the Hilbert scheme of a fixed number of points on the crepant resolution of $\C^2/\Gamma$,
and $X$ is the symmetric variety of unordered collections of points on the singular space.
\item $\tX$ and $X$ are hypertoric varieties (Section \ref{sec:hypertoric}).
\item $\tX = T^*(G/P)$ for a reductive algebraic group $G$ and a parabolic subgroup $P$,
and $X$ is the affinization of this variety.  (If $G$ is of type A, then $X$
is isomorphic to the closure of a nilpotent orbit in the Lie algebra of $G$.)
The action of $\cs$ is the inverse scaling action on the cotangent fibers.
\item $X$ is a transverse slice between Schubert varieties in the affine Grassmannian,
and $\tX$ is a resolution constructed from a convolution variety (Remark \ref{ring structure}).
\item $\tX$ and $X$ are Nakajima quiver varieties \cite{Nak94,Nak98}.
\end{itemize}

\begin{remark}
  The last class of examples overlaps significantly with each of the
  others.  The first two classes are special
  cases of quiver varieties, where the underlying graph of the quiver is the extended Dynkin diagram corresponding
  to $\Gamma$.  A hypertoric variety associated to a
  cographical arrangement is a quiver variety for the corresponding graph, but not all hypertoric varieties are of this form.  
If $G$ has type A, then $T^*(G/P)$ is a quiver variety of type A, as are slices in the affine Grassmannian for $G$
(but neither of these statements holds in other types).
\end{remark}

\subsection{BBD decomposition}
Let $G$ be a reductive algebraic group acting on $\tX$ via Hamiltonian symplectomorphisms that
commute with the action of $\cs$, and let $\bG = G \times \cs$.
Let $Z := \tX\otimes_X\tX$ be the {\bf Steinberg variety}, and let $Z_0,Z_1,\ldots,Z_r$ be its irreducible components,
with $Z_0$ being the diagonal copy of $\tX$.  
Let $$H := H^{2\dim X}_{BM}(Z; \C) = \C\{[Z_0], [Z_1], \ldots, [Z_r]\}$$
be the top degree Borel-Moore homology group of $Z$.  Then $H$ is an algebra under convolution 
with unit $[Z_0]$ \cite[2.7.41]{CG97}, 
and it acts on $H^*_\bG(\tX; \C)$.  Explicitly, the action of $[Z_i]$ is the graded $H^*_\bG(*; \C)$-linear endomorphism $L_i$
of $H^*_\bG(\tX; \C)$ given by pulling and pushing along the two projections from $Z_i$ to $\tX$.
The following results follow from \cite[\S 8.9]{CG97}; the main tool in the proof is the Beilinson-Bernstein-Deligne
decomposition theorem, applied to the map $\tX\to X$.

\begin{theorem}\label{BBD-CG}
For each pair $(S,\chi)$ consisting of a symplectic leaf of $X$ and a local system $\chi$ on $S$,
there is a vector space $V_{(S,\chi)}$ such that the following statements hold.
\begin{enumerate}
\item The convolution algebra $H$ is semisimple with $$H\;\cong\; \bigoplus_{(S,\chi)}\End\!\left(V_{(S,\chi)}\right).$$
\item Let $\becircled X$ be the dense symplectic leaf and $\operatorname{triv}$ the trivial local system
on $\becircled X$.  Then $V_{(\becircled X, \operatorname{triv})} \cong\C$.
\item There is a canonical isomorphism $$\IH \cong \Hom_H\!\left(V_{(\becircled X, \operatorname{triv})},
H^*_\bG(\tX; \C)\right).$$
\item  The kernel of the map from $H$
to $\End\!\left(V_{(\becircled X, \operatorname{triv})}\right)$ is equal to $\C\big\{[Z_1],\ldots,[Z_r]\big\}$.
\end{enumerate}
\end{theorem}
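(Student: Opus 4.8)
The plan is to deduce Theorem \ref{BBD-CG} from the general machinery of \cite[\S 8.9]{CG97} by verifying that the map $\tX \to X$ satisfies the hypotheses needed for that machinery to apply, and then reading off the four statements from the structure of the decomposition theorem. The key input is that $\tX \to X$ is a semismall resolution — this follows from the symplectic structure: a symplectic resolution is always semismall because the fibers over a symplectic leaf of codimension $2k$ have dimension exactly $k$ (this is Kaledin's theorem, or can be extracted from the fact that $\tX \to X$ is a crepant resolution of a variety with symplectic singularities). Once semismallness is in hand, the decomposition theorem applied to the constant sheaf (shifted) on $\tX$ expresses $R\pi_* \underline{\C}_{\tX}[\dim X]$ as a direct sum of shifted IC sheaves $\IC(\bar S, \chi)$ with no shifts, indexed by pairs $(S,\chi)$ of a symplectic leaf and an irreducible local system appearing in the relevant fiber cohomology, each with a multiplicity space $V_{(S,\chi)}$.

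**First I would** recall the convolution-theoretic formalism of \cite[\S 2.7, \S 8.6--8.9]{CG97}: the top Borel--Moore homology $H$ of the fiber product $Z = \tX \times_X \tX$ is identified with $\Ext^{\bullet}$-type endomorphisms of $R\pi_* \underline{\C}_{\tX}[\dim X]$ in the derived category, and since this complex is a semisimple perverse sheaf (by the decomposition theorem plus semismallness), Schur's lemma gives the algebra isomorphism $H \cong \bigoplus_{(S,\chi)} \End(V_{(S,\chi)})$, which is statement (1). Statement (2) is the observation that the open leaf $\becircled X$ is smooth and $\pi$ is an isomorphism over it, so the summand $\IC(X, \operatorname{triv})$ occurs with multiplicity one; hence $V_{(\becircled X,\operatorname{triv})} \cong \C$. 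Statement (3) follows from $\IH = H^*(X; \IC(X,\operatorname{triv})) = \Hom(\IC(X,\operatorname{triv}), R\pi_* \underline{\C}_{\tX}[\dim X]$ up to shift$)$ as a module over the endomorphism algebra, which translates into $\Hom_H(V_{(\becircled X,\operatorname{triv})}, H^*_\bG(\tX))$; one must be a little careful inserting the equivariant parameters for $\bG$, but the equivariant formality of these spaces (a standard consequence of the $\cs$-action contracting to the fixed locus) makes this bookkeeping routine rather than substantive.

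**The main obstacle** will be statement (4): identifying the kernel of $H \surj \End(V_{(\becircled X,\operatorname{triv})})$ precisely with the span of $[Z_1], \ldots, [Z_r]$, i.e. of the non-diagonal components of $Z$. The inclusion "$\supseteq$" is the easy direction — a non-diagonal component $Z_i$ projects with positive-dimensional generic fiber to $\tX$, so the induced operator $L_i$ lands in positive cohomological degree relative to the diagonal and hence annihilates the one-dimensional top summand; more precisely, $[Z_i]$ acts as zero on the summand $\IC(X,\operatorname{triv})$ because that summand is concentrated in the bottom of the perverse filtration while $L_i$ strictly raises it. For the reverse inclusion one counts dimensions: the kernel has codimension $1 = \dim \End(V_{(\becircled X,\operatorname{triv})})$ in $H$, and $\{[Z_1],\ldots,[Z_r]\}$ spans a subspace of codimension exactly one (since $[Z_0]$ is not in their span — the components are linearly independent in Borel--Moore homology), so the two subspaces of the same codimension, one containing the other, must coincide. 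The only real content to check is that $[Z_0] = [Z_\Delta]$ maps to a nonzero (hence generating) element of $\End(V_{(\becircled X,\operatorname{triv})}) \cong \C$, which is clear since it is the unit of $H$.
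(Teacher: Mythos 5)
Your proposal follows the same route as the paper, which for this theorem simply invokes \cite[\S 8.9]{CG97} together with the BBD decomposition theorem applied to the semismall map $\pi:\tX\to X$; your treatment of parts (1)--(3) and the dimension count in (4) match that machinery. The one place where your justification is wrong is the easy inclusion $\C\big\{[Z_1],\ldots,[Z_r]\big\}\subseteq\Ker\big(H\to\End(V_{(\becircled X,\operatorname{triv})})\big)$: since $\pi$ is semismall, $R\pi_*\,\C_{\tX}[\dim X]$ is already a (semisimple) perverse sheaf, so there is no nontrivial perverse filtration for $L_i$ to ``strictly raise,'' and each $L_i$ is in fact a degree-preserving operator on $H^*_\bG(\tX;\C)$. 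The correct argument is by supports: the projection of $H$ onto the block $\End(V_{(\becircled X,\operatorname{triv})})$ is computed by restricting Borel--Moore classes to the open subset $\pi^{-1}(\becircled X)\times_{\becircled X}\pi^{-1}(\becircled X)$, over which $Z$ coincides with the diagonal $Z_0$; hence each non-diagonal irreducible component $Z_i$ ($i\geq 1$) is disjoint from this open set and $[Z_i]$ restricts to zero there. With that repair, your dimension count finishes part (4) as stated.
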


From this we may deduce the following corollary.

\begin{corollary}\label{BBD-cor}
There is a canonical decomposition of graded $H^*_\bG(\*; \C)$-modules 
$$H^*_\bG(\tX; \C) \;\cong\; \bigcap_{i=1}^r \Ker(L_i) \;\oplus\; \sum_{i=1}^r \Im(L_i),$$
and the first summand is canonically isomorphic to $\IH$.
\end{corollary}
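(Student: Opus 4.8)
The plan is to deduce Corollary \ref{BBD-cor} from Theorem \ref{BBD-CG} by translating the statements about the semisimple convolution algebra $H$ into statements about the module $H^*_\bG(\tX;\C)$. First I would record the structural consequence of part (1): since $H \cong \bigoplus_{(S,\chi)} \End(V_{(S,\chi)})$ is semisimple, every finite-dimensional $H$-module, and more generally every graded $H^*_\bG(*;\C)$-linear $H$-module, decomposes canonically as a direct sum over the simple factors; for the factor $\End(V_{(\becircled X,\operatorname{triv})})$, the corresponding isotypic component of $H^*_\bG(\tX;\C)$ is $V_{(\becircled X,\operatorname{triv})}\otimes \Hom_H(V_{(\becircled X,\operatorname{triv})}, H^*_\bG(\tX;\C))$, which by parts (2) and (3) is canonically $\IH$ (using $V_{(\becircled X,\operatorname{triv})}\cong\C$). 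So the content of the corollary is really the identification of this isotypic piece, on one hand, with $\bigcap_i \Ker(L_i)$, and of the complementary sum of the other isotypic pieces with $\sum_i \Im(L_i)$.

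Next I would identify the two-sided ideal $I := \C\{[Z_1],\ldots,[Z_r]\}$ inside $H$. By part (4) it is the kernel of the projection $H \to \End(V_{(\becircled X,\operatorname{triv})})$, hence it is the sum of all the \emph{other} matrix blocks $\bigoplus_{(S,\chi)\neq(\becircled X,\operatorname{triv})}\End(V_{(S,\chi)})$, and its complementary block is the central idempotent $e$ projecting onto $\End(V_{(\becircled X,\operatorname{triv})})$. Then $\IH = e\cdot H^*_\bG(\tX;\C)$ and the complement is $I\cdot H^*_\bG(\tX;\C)$. The key step is now to match these with kernels and images of the $L_i$. On the one hand, $\sum_i \Im(L_i) = I\cdot H^*_\bG(\tX;\C)$ directly, since the $L_i$ are exactly the operators by which $[Z_i]$ acts and these span $I$. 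On the other hand, for a semisimple algebra a vector is killed by every element of the ideal $I$ precisely when it lies in the block complementary to $I$: that is, $\bigcap_i \Ker(L_i) = \{v : I\cdot v = 0\} = (1-e')\cdot H^*_\bG(\tX;\C)$ where $1-e'$ is the central idempotent with $I = e'H$; but since $I$ is the sum of all blocks except $\End(V_{(\becircled X,\operatorname{triv})})$, that complementary block is exactly $e = 1 - e'$, so $\bigcap_i\Ker(L_i) = e\cdot H^*_\bG(\tX;\C) = \IH$. Finally $e\cdot H^*_\bG(\tX;\C) \oplus I\cdot H^*_\bG(\tX;\C) = H^*_\bG(\tX;\C)$ because $e$ and $1-e$ are complementary central idempotents, giving the asserted direct sum decomposition, and all identifications are canonical because $e$ is the canonical central idempotent attached to the distinguished factor.

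The one point requiring a little care — and the step I expect to be the main obstacle — is the grading compatibility: one must check that the central idempotent $e \in H$ acts on the graded module $H^*_\bG(\tX;\C)$ by a graded ($H^*_\bG(*;\C)$-linear, degree-zero) projector, so that both summands are graded $H^*_\bG(*;\C)$-submodules as claimed. This follows because $H$ itself is a graded algebra (concentrated in degree zero after the standard shift, or with its natural grading as top Borel-Moore homology), the decomposition in Theorem \ref{BBD-CG}(1) is a decomposition of graded algebras, and each $L_i$ is graded $H^*_\bG(*;\C)$-linear by construction (pull-push along proper maps); hence $e$, being a polynomial in the $[Z_i]$ with scalar coefficients, acts by a graded endomorphism, and $\Ker$, $\Im$, and the isotypic pieces are all graded submodules. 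Once this bookkeeping is in place the corollary is immediate from the four parts of the theorem.
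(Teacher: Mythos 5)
Your proposal is correct and follows essentially the same route as the paper: decompose $H^*_\bG(\tX;\C)$ into isotypic components using the semisimplicity of $H$ from part (1), identify the component for $(\becircled X,\operatorname{triv})$ with $\IH$ via parts (2) and (3), and identify the complementary summand with $\C\{[Z_1],\ldots,[Z_r]\}\cdot H^*_\bG(\tX;\C)=\sum\Im(L_i)$ via part (4). You merely spell out two points the paper leaves implicit — that the distinguished isotypic piece is exactly $\bigcap\Ker(L_i)$ because the ideal spanned by the $[Z_i]$ is the sum of the other matrix blocks, and the grading compatibility — both of which are fine.
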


\begin{proof}
From part (1) of Theorem \ref{BBD-CG}, we have 
$$H^*_\bG(\tX; \C) \;\cong\;\bigoplus_{(S,\chi)}\Hom_H\!\left(V_{(S,\chi)}, H^*_\bG(\tX; \C)\right)\otimes V_{(S,\chi)}.$$
From parts (2) and (3), the summand corresponding to the pair $(\becircled X, \operatorname{triv})$ 
is canonically isomorphic to $\IH\otimes\C \cong \IH$.  From part (4), the complementary summand
is equal to $\C\big\{[Z_1],\ldots,[Z_r]\big\}\cdot H^*_\bG(\tX; \C) = \sum\Im(L_i)$.
\end{proof}

\subsection{Quantum cohomology}
Let $C\subset H_2(\tX; \Z)/H_2(\tX; \Z)_{\operatorname{torsion}}$ be the semigroup of effective curve classes.
Let $$\L := \C[C] = \C\{q^\b\mid \b\in C\}$$ be the semigroup ring of $C$, 
and let $\hL$ be the completion of $\L$ at the augmentation ideal.

Assume that we are given a class $\kappa\in H^2(\tX; \Z/2\Z)$ with the property that the restriction of $\kappa$
to any smooth Lagrangian subvariety of $\tX$ is equal to its second Steifel-Whitney class.    
If $\tX$ is a cotangent bundle, this condition uniquely determines $\kappa$.  If $\tX$ is a Hamiltonian
reduction of a symplectic vector space by the linear action of a reductive group, then there is a natural choice for $\kappa$
\cite[\S 2.4]{BLPWgco}.  These two cases cover all but the fifth class of examples in Section \ref{PCSR}.\footnote{It is not known
whether such a class $\kappa$ exists in general, or whether there is always a canonical choice.}

Let $\QH$ denote the $\bG$-equivariant quantum cohomology ring of $\tX$, 
modified in the sense of \cite[\S 1.2.5]{QGQC}.  More precisely, the element $q^\beta$ in our ring corresponds
to the element $(-1)^{(\b,\kappa)}q^\beta$ in the usual quantum cohomology ring.
As a graded vector space, we have
$$\QH := H^*_\bG(\tX; \C)\otimes_\C \hL,$$
where $\hL$ lies in degree zero.
Let $\QHpol \subset \QH$ be the $\L$-subalgebra generated by the subspace $H^*_\bG(\tX; \C)\otimes_\C \L$.

Consider the maximal ideal $$\mm := \big\langle 1 - q^{\b}\mid \b\in C\big\rangle \subset \L.$$

\begin{remark}
Philosophically, $\mm$ should be the ``worst possible" maximal ideal, in that the formula for modified quantum multiplication 
for the Springer resolution \cite[1.1]{BMO}, hypertoric varieties \cite[4.2]{McBS}, and quiver varieties
\cite[1.3.2]{QGQC} all involve rational functions with denominators of the form
$1  - q^{\b}$ for some effective curve class $\b$.
\end{remark}

Consider the ring $$R_\bG(\tX) := \QHpol \otimes_{\L} \L/\mm,$$
which is a graded algebra over $\C[\hbar] = H^*_\cs\!(*; \C)$.  Let $$R'_\bG(\tX) := R_\bG(\tX)/\Ann(\hbar).$$
We are now prepared to state our main conjecture.

\begin{conjecture}\label{main}
Consider the natural map $\psi_\bG:H^*_\bG(\tX; \C)\to R'_\bG(\tX)$ of graded $\C[\hbar]$-modules given by the composition
$$H^*_\bG(\tX; \C)\hookrightarrow \QHpol \twoheadrightarrow R'_\bG(\tX).$$
\begin{enumerate}
\item The map $\psi_\bG$ is surjective.
\item The kernel of $\psi_\bG$ is equal to $\displaystyle\sum_{i=1}^r\Im(L_i)$.
\end{enumerate}
\end{conjecture}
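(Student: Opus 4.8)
**Note: this is a conjecture, not a theorem, so I will outline what a proof strategy would look like if one were to attempt it, drawing on the structure already assembled in the excerpt.**\textbf{Proof strategy.}
The natural starting point is Corollary \ref{BBD-cor}, which already furnishes a canonical decomposition
$$H^*_\bG(\tX; \C) \;\cong\; \bigcap_{i=1}^r \Ker(L_i) \;\oplus\; \sum_{i=1}^r \Im(L_i),$$
with the first summand canonically identified with $\IH$. The plan is to show that the map $\psi_\bG$ realizes precisely the projection onto the first summand along the second, i.e.\ that $\Ker(\psi_\bG) = \sum \Im(L_i)$ and that $\psi_\bG$ restricts to an isomorphism on $\bigcap \Ker(L_i)$. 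Part (1) then follows immediately from part (2) together with the fact that $\bigcap\Ker(L_i) \cong \IH$ is not too large. So the crux is part (2), and within part (2) the two inclusions are of quite different character.

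For the inclusion $\sum \Im(L_i) \subseteq \Ker(\psi_\bG)$, the idea is to use the deformation-theoretic meaning of quantum multiplication. The operators $L_i$ are classical convolution operators coming from the non-diagonal Steinberg components $Z_i$; one expects each such $Z_i$ to be ``swept out'' by curves in a way that makes the corresponding correction terms appear in modified quantum multiplication with coefficients that are rational functions with denominators $1 - q^\b$. Setting $q^\b = 1$ (the ideal $\mm$) produces poles, which is exactly why one passes to $\QHpol$ and then kills $\Ann(\hbar)$: after clearing denominators, the residual relations should force the classes in $\sum\Im(L_i)$ to vanish in $R'_\bG(\tX)$. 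Making this precise in general would require a structural statement to the effect that the ``quantum corrections'' to multiplication by an equivariant class are supported, component by component, on the $Z_i$ — something that is known explicitly for the Springer resolution \cite{BMO}, hypertoric varieties \cite{McBS}, and quiver varieties \cite{QGQC} via the explicit formulas, but which would need a uniform geometric input (presumably via stable envelopes or the $R$-matrix formalism of Maulik–Okounkov) to handle the general conical symplectic resolution. This is the step I expect to be the main obstacle: there is no general closed formula for modified quantum multiplication, so one must extract the required divisibility/support property from the geometry of the $\cs$-fixed loci and the wall-crossing structure rather than from an explicit presentation.

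For the reverse inclusion $\Ker(\psi_\bG) \subseteq \sum\Im(L_i)$, the strategy is a dimension count combined with the canonical identification from Corollary \ref{BBD-cor}. Granting the first inclusion, $\psi_\bG$ factors through the quotient $H^*_\bG(\tX;\C)/\sum\Im(L_i) \cong \IH$ (canonically, by the corollary), so it suffices to show the induced map $\IH \to R'_\bG(\tX)$ is injective. Here one would argue that $R'_\bG(\tX)$, being the $\hbar$-torsion-free quotient of the specialized polynomial quantum cohomology, has the ``right size'': the generic specialization of the quantum parameters has rank equal to $\dim H^*_\bG(\tX;\C)$, but the ideal $\mm$ is the worst-case specialization (as the remark on denominators $1-q^\b$ already signals), and quotienting by $\Ann(\hbar)$ collapses exactly the rank deficit down to $\dim \IH$ by semicontinuity together with the BBD decomposition governing the fixed-point contributions. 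Once the rank of $R'_\bG(\tX)$ is pinned to $\dim\IH$ and surjectivity of $\psi_\bG$ onto it is in hand, injectivity on $\IH$ is automatic. In the hypertoric case all of this becomes tractable precisely because \cite{McBS} provides an explicit generators-and-relations presentation, so one can check the support property of the quantum corrections and perform the rank count by hand; the general conjecture awaits an analogous structural understanding of quantum multiplication, and that is where the real work lies.
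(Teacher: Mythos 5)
You are right to flag that this is a conjecture; the paper proves it only for hypertoric varieties, so the fair comparison is with Section \ref{sec:hypertoric}. Your outline has the correct overall shape --- Corollary \ref{BBD-cor} identifies $\sum\Im(L_i)$ as the complement of $\IH$, one inclusion of the kernel statement should come from the pole structure of the quantum corrections (this is essentially the paper's own remark on the operators $\frac{q^\beta}{1-q^\beta}\hbar L_\beta$), and the rest should come from a size count. But two of your steps have real gaps, and in both cases the paper does something more concrete. First, the logical order: you claim part (1) ``follows immediately'' from part (2), which is a non sequitur --- knowing the kernel says nothing about the image. Your later repair (pin $\dim R'_\bG(\tX)$ to $\dim\IH$) is the right idea, but ``semicontinuity'' gives the inequality in the unhelpful direction: specializing at the worst ideal $\mm$ can only make the fiber jump, and killing $\Ann(\hbar)$ must then cut it back down by exactly the right amount. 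Establishing $\Hilb(R'_\bT(\tX);t)=\Hilb(\IHT;t)$ is in fact the hardest technical input of the hypertoric proof; it is Theorem \ref{thm-apx} (freeness of $\OTh$ over $k[\hbar]$, proved by an explicit toric argument in the appendix) combined with the known Hilbert series of $\IHT$. The paper moreover does not deduce surjectivity from the kernel statement at all: it proves Corollary \ref{surj} first and independently, from Lemma \ref{small} plus the spanning of $R'_\bT(\tX)$ by independent square-free monomials (Proposition \ref{small quantum monomials}), and surjectivity is then an \emph{input} to the Hilbert-series comparison of Lemma \ref{HilbUV}.

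Second, the inclusion $\sum\Im(L_i)\subseteq\Ker(\psi_\bG)$. Your residue argument shows at best that certain specific classes of the form $L_\beta(\alpha)$ --- those arising from quantum products that degenerate at $\mm$ --- die in $R'_\bG(\tX)$. To conclude that all of $\sum\Im(L_i)$ dies, you must also show that such classes \emph{span} $\sum\Im(L_i)$, and that is not automatic. In the hypertoric case this is precisely the content of Lemmas \ref{WinV} and \ref{HilbVW}: the paper introduces the explicit subspace $W$ generated by the elements $u_S\fch$, shows $W\subseteq\Ker(\psi_\bT)$ trivially and $W\subseteq\sum\Im(L_i)$ by the residue computation you describe, and then needs a separate Gr\"obner-degeneration argument to prove $W=\sum\Im(L_i)$. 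That last equality is a genuine additional step your outline omits, and for a general conical symplectic resolution there is at present no substitute for it.
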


\begin{proposition}\label{ih}
If Conjecture \ref{main} holds, then $\psi_\bG$ descends to isomorphism of graded $H^*_\bG(*; \C)$-modules
from $\IH$ to $R'_\bG(\tX)$.
\end{proposition}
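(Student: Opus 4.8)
The plan is to combine Conjecture \ref{main} with Corollary \ref{BBD-cor} purely formally. Assume Conjecture \ref{main} holds. By part (2), the kernel of the surjection $\psi_\bG : H^*_\bG(\tX; \C)\to R'_\bG(\tX)$ is exactly $\sum_{i=1}^r\Im(L_i)$, so $\psi_\bG$ induces an isomorphism of graded $\C[\hbar]$-modules
$$H^*_\bG(\tX; \C)\Big/\sum_{i=1}^r\Im(L_i)\;\xrightarrow{\ \sim\ }\;R'_\bG(\tX).$$
On the other hand, Corollary \ref{BBD-cor} gives a direct sum decomposition $H^*_\bG(\tX; \C)\cong\bigcap_i\Ker(L_i)\oplus\sum_i\Im(L_i)$ with $\bigcap_i\Ker(L_i)\cong\IH$ canonically. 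The projection onto the first summand identifies the quotient $H^*_\bG(\tX; \C)/\sum_i\Im(L_i)$ with $\bigcap_i\Ker(L_i)\cong\IH$. Composing, we obtain the desired isomorphism $\IH\xrightarrow{\sim}R'_\bG(\tX)$, and it is induced by (the descent of) $\psi_\bG$ since $\psi_\bG$ restricted to the first summand is the composition of the inclusion $\bigcap_i\Ker(L_i)\into H^*_\bG(\tX;\C)$ with $\psi_\bG$.

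The only genuine point requiring care is the upgrade from $\C[\hbar]$-modules to $H^*_\bG(*;\C)$-modules. Here I would observe that $\psi_\bG$ is a map of graded $H^*_\bG(*;\C)$-modules by construction (the inclusion $H^*_\bG(\tX;\C)\into\QHpol$ is $H^*_\bG(*;\C)$-linear, as is the quotient map), so its kernel is an $H^*_\bG(*;\C)$-submodule and the induced map on the quotient is $H^*_\bG(*;\C)$-linear. Likewise the decomposition in Corollary \ref{BBD-cor} is a decomposition of graded $H^*_\bG(*;\C)$-modules and the identification $\bigcap_i\Ker(L_i)\cong\IH$ is canonical as such. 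Hence every arrow in the composition above is $H^*_\bG(*;\C)$-linear, and the result is an isomorphism of graded $H^*_\bG(*;\C)$-modules.

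There is no serious obstacle; the statement is essentially a bookkeeping consequence of the conjecture together with the already-established corollary. The one subtlety worth a sentence in the writeup is to confirm that $\sum_i\Im(L_i)$ is the \emph{same} submodule appearing in both inputs — it is, by definition, the span of $[Z_1],\dots,[Z_r]$ acting on $H^*_\bG(\tX;\C)$ — so the two constructions are literally matching the same quotient, not merely abstractly isomorphic ones. The grading claim is automatic since each $L_i$ is degree-preserving (indeed the $L_i$ are graded $H^*_\bG(*;\C)$-linear by the discussion preceding Theorem \ref{BBD-CG}) and $\psi_\bG$ is graded.
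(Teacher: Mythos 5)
Your argument is correct and is exactly the paper's: the paper's entire proof is the one-line observation that Corollary \ref{BBD-cor} gives the canonical identification $\IH \cong H^*_\bG(\tX; \C)/\sum\Im(L_i)$, after which surjectivity and the kernel computation from Conjecture \ref{main} do the rest. Your additional remarks on $H^*_\bG(*;\C)$-linearity and the grading are fine but are left implicit in the paper.
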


\begin{proof}
By Corollary \ref{BBD-cor}, we have a canonical isomorphism $\IH \cong H^*_\bG(\tX; \C)/\sum\Im(L_i)$.
\end{proof}

\begin{remark}\label{getting rid of G}
Since $H^*(\tX; \C)$ and $H^*_\bG(*; \C)$ both vanish in odd degree \cite[2.5]{BLPWquant}, the Leray-Serre
spectral sequence for the fibration $X_\cs\hookrightarrow X_\bG\to BG$ 
tells us that $H^*_\bG(\tX; \C)$ is a free module over $H^*_G(*; \C)$ and 
$H^*_\cs\!(\tX; \C) \cong H^*_\bG(\tX; \C)\otimes_{H^*_\bG(*; \C)}\C[\hbar]$.  Similar statements hold for quantum cohomology
of $\tX$ and intersection cohomology of $X$.  For this reason, if Conjecture \ref{main} holds for $\psi_\bG$,
then it also holds for $\psi_\cs$.
\end{remark}

\begin{example}\label{basic example}
Consider $T^*\mathbb{P}^1$, equipped with the inverse scaling action of $\cs$ on the fibers and the natural action
of a maximal torus $T\subset \operatorname{PGL}(2)$.  
We have $H^*_\bT(T^*\mathbb{P}^1; \C) = \C[x,y,\hbar]/\langle xy\rangle$, where $x = [T^*_0\mathbb{P}^1]$
and $y = [T^*_\infty\mathbb{P}^1]$.  

In quantum cohomology, we have
$$x * y = \frac{q}{1-q} \hbar L_1(y).$$
Here $L_1(y) = [\mathbb{P}^1] = \hbar - x - y$, but it will not be necessary to know this for the discussion that follows.
We have presentations 
$$QH^*_\bT(T^*\mathbb{P}^1; \C) = \C[x,y,\hbar][[q]]\Big{/}\left\langle xy - \frac{q}{1-q} \hbar L_1(y)\right\rangle$$
and
$$QH^*_\bT(T^*\mathbb{P}^1; \C)_{\operatorname{pol}} = \C[x,y,\hbar,q]\Big{/}\Big\langle (1-q)xy - q\hbar L_1(y)\Big\rangle.$$
Setting $q=1$ gives us $$R_\bT(T^*\mathbb{P}^1)\cong\C[x,y,\hbar,q]\big{/}\big\langle \hbar L_1(y)\big\rangle,$$ 
and killing the annihilator of $\hbar$
gives us $$R'_\bT(T^*\mathbb{P}^1)\cong\C[x,y,\hbar,q]\big{/}\big\langle L_1(y)\big\rangle.$$
Of course, it is not the case that $\psi_\bT$ takes the class in $H^*_\bT(T^*\mathbb{P}^1; \C)$
represented by an arbitrary polynomial $f(x,y,\hbar)$ to the class in $R'_\bT(T^*\mathbb{P}^1)$ represented
by the same polynomial; this would not be well-defined.  However, $\psi_\bT$ does behave this way on linear polynomials,
and this (along with $H^*_\bT(*; \C)$-linearity) is enough to conclude both that $\psi_\bT$ is surjective and that the image
of $L_1$ is contained in the kernel.  The fact that the image of $L_1$ is equal to the kernel can be concluded by counting dimensions.
\end{example}

\begin{remark}
The quantum correction 
to multiplication by a divisor is conjecturally given by linear combinations of operators of the form
$$\frac{q^\beta}{1-q^\beta} \hbar L_\beta,$$
where $\beta\in H_2(\tX; \C)$ is one of finitely many ``roots" of $\tX$, and
$L_\beta$ is a linear combination of $L_1,\ldots,L_r$.
This conjecture is proved for the Springer resolution \cite{BMO}, as well as for quiver varieties
(modulo Kirwan surjectivity in degree 2) \cite{QGQC}.
In these situations, quantum multiplication is naturally defined on
the vector space $\QHloc := H^*_\bG(\tX; \C)\otimes_\C \Lambda_{\operatorname{loc}}$,
where $\Lambda_{\operatorname{loc}}$ is given by adjoining inverses to the classes $1-q^\beta$.
We may regard $\Spec\QHloc$ as an affine subvariety of 
$$\Spec\;\Sym_{H^*_\bG(*; \C)}\!\big(H^*_\bG(\tX; \C)\big)\times\Spec\Lambda_{\operatorname{loc}}
= H_*^\bG(\tX; \C)\times\Spec\Lambda_{\operatorname{loc}},$$
and $\Spec\QHpol$ is then the closure of $\Spec\QHloc$ inside of $H_*^\bG(\tX; \C)\times\Spec\Lambda.$
If $\Spec\QHpol$ is flat over $\Spec\Lambda$ and all of the roots are primitive, then it follows that
the image of every $L_\beta$ is contained in the kernel of $\psi_\bG$.
This gives us a strategy for proving one of the two inclusions needed for part (2) of Conjecture \ref{main}
in a number of other cases.
\end{remark}

\begin{remark}\label{ring structure}
The intersection cohomology $\IH$ is a priori only a graded $H^*_\bG(*; \C)$-module, while $R'_\bG(\tX)$ is an algebra.
One of the interesting consequences of Conjecture \ref{main} and Proposition \ref{ih} is that it would endow $\IH$
with an algebra structure.  In the case of hypertoric varieties, the module $I\! H_T^*(\tX; \C)$ was given an algebra
structure by Braden and the second author \cite{TP08} via completely different means, 
and this coincides with the algebra structure that we obtain
in this paper after setting $\hbar$ to zero (Proposition \ref{same iso}).

Another intriguing class of examples is the fifth one mentioned in Section \ref{PCSR}.
Fix a simple, simply laced algebraic group $G$ with maximal torus $T\subset G$.
Let $\operatorname{Gr}$ be the affine Grassmannian for $G$, and for
any dominant coweight $\la\in\Hom(\cs,T)$, consider the Schubert variety $\operatorname{Gr}^\la\subset\operatorname{Gr}$.
Fix dominant coweights $\la\geq\mu$, and let $X$ be a normal slice to $\operatorname{Gr}^\mu$ inside of $\operatorname{Gr}^\la$.
Using the geometric Satake correspondence \cite{G-GS,MV-GS}, 
Ginzburg produces an isomorphism between a quotient of $I\! H_T^*(X; \C)$ (obtained by choosing
generic values for the equivariant parameters)
and the $\mu$ weight space of the irreducible representation $V(\la)$ of $G^L$ \cite[3.11 \& 5.2]{G-GS}.
If $\la$ is a sum of minuscule coweights (for example, if $G$ is of type A), 
then $X$ admits a conical symplectic resolution \cite[2.9]{KWWY}.
Thus our Conjecture \ref{main} and Proposition \ref{ih} would endow the weight space $V(\la)_\mu$ with a ring structure.

In \cite{FFR}, Feigin, Frenkel, and Rybnikov define a ring structure on the whole representation $V(\la)$ using a \emph{quantum shift of argument subalgebra}. We believe that the above considerations may lead to a geometric explanation of their results.
\end{remark}

\excise{
\begin{remark}
When $X$ is a hypertoric variety, $\IH$ admits a canonical ring structure \cite{TP08}.\footnote{Strictly speaking,
we constructed the ring structure with the Hamiltonian torus action but without the $\bS$-action.  However,
it would not be difficult to extend this construction $\bS$-equivariantly.}
The construction was somewhat opaque, in that we gave a combinatorial proof that there exists a unique ring structure with certain
functorial properties, but we gave no geometric explanation for the existence of such a ring structure.

It is easy to check that the presentation of $\IH$ in \cite{TP08} matches the presentation
of $R'(\tX)$ obtained from \cite{McBS}, thus Proposition \ref{ih} ``explains" where the ring structure in \cite{TP08}
came from.  More generally, Proposition \ref{ih} imposes a ring structure on the equivariant intersection cohomology
of any affine cone that admits a conical symplectic resolution.
\end{remark}
}

\section{Hypertoric varieties}\label{sec:hypertoric}
In this section we prove Conjecture \ref{main} for hypertoric varieties.

\subsection{Definitions}\label{sec:defs}
We begin by reviewing the constructive definition of a projective hypertoric variety, which was first introduced in \cite{BD}.
An intrinsic approach to hypertoric varieties can be found in \cite{AP}.

Fix a finite-rank lattice $N$, along with a list of (not necessarily distinct) nonzero primitive vectors $a_1,\ldots,a_n\in N$ and
integers $\th_1,\ldots,\th_n$.  Consider the hyperplanes
$$H_i := \{x\in N_\R^\vee\mid \langle a_i, x\rangle + \th_i = 0\}$$
along with the associated half-spaces
$$H_i^+ := \{x\in N_\R^\vee\mid \langle a_i, x\rangle + \th_i \geq 0\}
\and H_i^- := \{x\in N_\R^\vee\mid \langle a_i, x\rangle + \th_i \leq 0\}.$$
We make the following assumptions on our data:
\begin{itemize}
\item {\bf Full rank:} The lattice $N$ is spanned by $\{a_1,\ldots,a_n\}$.
\item {\bf No co-loops:} For all $i$, the lattice $N$ is spanned by $\{a_1,\ldots,a_n\}\smallsetminus\{a_i\}$.
\item {\bf Unimodular:}  For any $S\subset[n]$, if $\{a_i\mid i\in S\}$ spans $N_\Q$ over $\Q$, then it spans $N$ over $\Z$.
\item {\bf Simple:}  For any $S\subset[n]$, $\displaystyle\codim\bigcap_{i\in S}H_i = |S|$ (note that the empty set has every codimension).
\end{itemize}
Consider the short exact sequence
$$0\longrightarrow P\overset{\iota}\longrightarrow\Z^n\overset{\pi}{\longrightarrow} N\longrightarrow 0,$$
where $\pi$ takes the $\ith$ coordinate vector to $a_i$ and $P := \ker(\pi)$.
Dualizing and then taking homomorphisms into $\cs$, we obtain an exact sequence of tori
$$1\to K\to T^n\to T\to 1.$$  The torus $T^n$ acts symplectically on $T^*\C^n$ with moment map
$$\mu_n:T^*\C^n\to \Lie(T^n)^\vee\cong\C^n$$ given by the formula
$$\mu_n(z_1,w_1,\ldots,z_n,w_n) = (z_1w_1,\ldots,z_nw_n).$$
Composing with $\iota^\vee:\C^n\to P^\vee_\C$, we obtain a moment map $$\mu_K:T^*\C^n\to P^\vee_\C$$
for the action of $K$.  The element $\theta = (\th_1,\ldots,\th_n)\in\Z^n\cong\Hom(T^n,\cs)$ is a character of $T^n$,
which we also regard as a character of $K$ by restriction.
Consider the symplectic quotients $$X:= \mu_K^{-1}(0)\mmod_{\!0} \,K = \Spec\C[\mu_K^{-1}(0)]^K,$$
and $$\tX := \mu_K^{-1}(0)\mmod_{\!\th} \,K = \Proj\left(\C[\mu_K^{-1}(0)]\otimes\C[t]\right)^K,$$
where $K$ acts on $t$ via the character $\th$.  The assumptions of simplicity and unimodularity
imply that the natural map from $\tX$ to $X$ is a projective symplectic resolution \cite[3.2 \& 3.3]{BD}.

The action of $\cs$ on $T^*\C^n$ via inverse scaling of the cotangent fibers descends to an action on $\tX$,
and the symplectic form has weight 1 with respect to this action.
The assumption of no co-loops implies that $\C[X]^\cs = \C$, and therefore that $\tX$ is a
conical symplectic resolution of $X$.
The Hamiltonian action of $T^n$ on $T^*\C^n$ induces an action on $\tX$, and this descends
to an effective Hamiltonian action of $T$ that commutes with the action of $\cs$.  Let $\bT = T \times \cs$.

\subsection{Cohomology}
We next review some basic facts about the cohomology of hypertoric varieties.
A minimal set $C\subset[n]$ such that $\bigcap_{i\in C}H_i = \emptyset$ is called a {\bf circuit}.
If $C$ is a circuit, then there exists a unique decomposition $$C = C^+\sqcup C^-\qquad\text{such that}\qquad
\bigcap_{i\in C^+}H_i^+ \cap \bigcap_{i\in C^-}H_i^- = \emptyset.$$

Let $A := \Sym N^\vee \cong H^*_{T}(*; \Z)$ be the $T$-equivariant cohomology ring of a point.
The $\bT$-equivariant cohomology ring of $\tX$ was computed by Harada and the second author \cite[4.4]{HP04}, building
on the $T$-equivariant computation in \cite{Ko99}.

\begin{theorem}\label{cohom}
The ring $H^*_{\bT}(\tX; \Z)$ is isomorphic to $\Z[u_1,\ldots,u_n,\hbar]/J_0$,
where $J_0$ is the ideal generated by
$$\prod_{i\in C^+}u_i\cdot\prod_{j\in C^-}(\hbar-u_j)$$
for each circuit $C\subset[n]$.  We have $\deg(u_i) = \deg(\hbar) = 2$ for all $i$, and
the $A$-algebra structure is given by the natural inclusion 
$$\pi^\vee:N^\vee\to\Z^n\cong\Z\{u_1,\ldots,u_n\}.$$
\end{theorem}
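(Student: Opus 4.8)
The plan is to deduce the $\bT$-equivariant statement from the already-known non-equivariant or $T$-equivariant computations via a standard deformation/specialization argument, organized around the Kirwan map. First I would recall that $\tX$ is a geometric quotient $\mu_K^{-1}(0)^{\th\text{-ss}}/K$, so the Kirwan map $H^*_{\bT\times K}(T^*\C^n;\Z) \to H^*_{\bT}(\tX;\Z)$ is surjective — this is Kirwan surjectivity for hypertoric varieties, which holds integrally because all the relevant spaces have cohomology concentrated in even degrees and are equivariantly formal (cf.\ the vanishing of odd cohomology cited in Remark \ref{getting rid of G}). The source ring is a polynomial ring $\Z[u_1,\ldots,u_n,\hbar]$, where $u_i$ is the equivariant first Chern class of the $i$-th coordinate line (restricted from $T^*\C^n$), and $\hbar$ is the equivariant parameter of the conical $\cs$; the $A = \Sym N^\vee$-algebra structure is induced by $\pi^\vee\colon N^\vee \to \Z^n$ as in the statement.

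The next step is to identify the kernel. The kernel of the Kirwan map is generated by classes that vanish on the unstable locus, and for GIT quotients this is controlled by the combinatorics of the unstable strata. Concretely, for each circuit $C = C^+ \sqcup C^-$, the corresponding coordinate subspace $\{z_i = 0 \text{ for } i\in C^+,\ w_j = 0 \text{ for } j\in C^-\}$ lies entirely in the $\th$-unstable locus of $\mu_K^{-1}(0)$ — this is exactly the translation of the condition $\bigcap_{i\in C^+}H_i^+ \cap \bigcap_{j\in C^-}H_j^- = \emptyset$. The equivariant Euler class of the normal bundle to this coordinate subspace in $T^*\C^n$ is $\prod_{i\in C^+}u_i \cdot \prod_{j\in C^-}(\hbar - u_j)$, since the $w$-directions carry the inverse scaling action and hence contribute $\hbar - u_j$. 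Pulling these classes back to $\tX$ gives zero, so $J_0 \subseteq \ker$. For the reverse inclusion one shows $\Z[u_1,\ldots,u_n,\hbar]/J_0$ already has the right size: it is a free $A[\hbar]$-module (resp.\ free over $\Z[\hbar]$) whose rank equals the number of vertices of the hyperplane arrangement, matching the Euler characteristic / Poincaré polynomial of $\tX$ computed in \cite{BD}. Using unimodularity and simplicity, $J_0$ is exactly the Stanley–Reisner-type ideal whose quotient is the equivariant cohomology of the toric-like stratification, and a Gröbner/regular-sequence argument gives the module structure and the rank count.

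Alternatively — and this is probably the cleanest route — I would simply cite \cite{HP04,Ko99}: the $T$-equivariant ring is computed there as $\Z[u_1,\ldots,u_n]/J$ with $J$ generated by $\prod_{i\in C^+}u_i\cdot\prod_{j\in C^-}(-u_j)$ (i.e.\ the $\hbar = 0$ specialization), and the only new content is tracking the $\cs$-weights on the cotangent fibers to see that the $w_j$-directions contribute $\hbar - u_j$ rather than $-u_j$. Since $H^*_{\bT}(\tX;\Z)$ is a free module over $H^*_{\cs}(*;\Z) = \Z[\hbar]$ (equivariant formality again), the $\bT$-equivariant ring is the flat $\Z[\hbar]$-deformation of the $T$-equivariant one determined by these weights, which is precisely $\Z[u_1,\ldots,u_n,\hbar]/J_0$. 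The main obstacle is the bookkeeping: making sure the signs and the assignment $z_i \leftrightarrow u_i$, $w_j \leftrightarrow \hbar - u_j$ are consistent with the chosen conventions for the $\cs$-action (weight $1$ on the symplectic form, inverse scaling on fibers), and confirming that integrality of the presentation really does follow from unimodularity — but none of this requires new ideas beyond \cite{HP04}.
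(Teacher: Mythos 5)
The paper offers no proof of this statement: it is quoted directly from \cite[4.4]{HP04} (which builds on the $T$-equivariant computation of \cite{Ko99}), so your ``cleanest route'' of simply citing those references is exactly what the paper does, and your direct sketch is a fair outline of the argument carried out there. One caveat if you were to write out the first route in full: integral Kirwan surjectivity for hypertoric varieties does not follow merely from evenness of cohomology and equivariant formality --- it is a substantive theorem of \cite{Ko99,HP04} proved by Morse theory on the level set --- and the reverse inclusion $\ker\subseteq J_0$ genuinely requires the rank count you allude to, so neither step can be waved through.
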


\begin{corollary}
We have a canonical isomorphism $H_2(\tX; \Z)\cong P$.
\end{corollary}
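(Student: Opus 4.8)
The plan is to read off $H^2(\tX;\Z)$ from the presentation of Theorem \ref{cohom} and then dualize twice. Every generator of $J_0$ is a product of $|C|$ linear forms in $u_1,\dots,u_n,\hbar$; a circuit $C$ necessarily has $|C|\geq 2$ (a single one of the hyperplanes $H_i$ is nonempty, since $a_i\neq 0$, and the empty intersection is all of $N_\R^\vee$), so each generator of $J_0$ has degree $\geq 4$. Hence $J_0$ vanishes in degrees $\leq 3$, and in particular $H^2_{\bT}(\tX;\Z)$ is the free abelian group $\Z\{u_1,\dots,u_n,\hbar\}$.

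Next I would pass to ordinary cohomology. Running the argument of Remark \ref{getting rid of G} integrally --- using that the integral cohomology of a hypertoric variety is torsion-free and concentrated in even degrees \cite{Ko99}, so that the Leray-Serre spectral sequence of $\tX\hookrightarrow \tX_{\bT}\to B\bT$ degenerates --- one obtains that $H^*_{\bT}(\tX;\Z)$ is free over $H^*_{\bT}(*;\Z)=A\otimes\Z[\hbar]$, with $H^*(\tX;\Z)\cong H^*_{\bT}(\tX;\Z)\otimes_{A\otimes\Z[\hbar]}\Z$. Taking degree-$2$ parts, this says that $H^2(\tX;\Z)$ is the quotient of $\Z\{u_1,\dots,u_n,\hbar\}$ by the subgroup spanned by $\hbar$ and by the image of $N^\vee$ under the structure map $\pi^\vee$ of Theorem \ref{cohom}; in other words $H^2(\tX;\Z)\cong\Z^n/\pi^\vee(N^\vee)$.

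Now I would dualize the defining sequence $0\to P\overset{\iota}\longrightarrow\Z^n\overset{\pi}\longrightarrow N\to 0$. Since $N$ is a lattice we have $\Ext^1(N,\Z)=0$, so applying $\Hom(-,\Z)$ produces a short exact sequence $0\to N^\vee\overset{\pi^\vee}\longrightarrow\Z^n\overset{\iota^\vee}\longrightarrow P^\vee\to 0$; thus $\iota^\vee$ induces a canonical isomorphism $\Z^n/\pi^\vee(N^\vee)\cong P^\vee$, and hence $H^2(\tX;\Z)\cong P^\vee$. Finally, because the integral cohomology of $\tX$ is torsion-free, the universal coefficients theorem identifies $H_2(\tX;\Z)$ with $\Hom(H^2(\tX;\Z),\Z)\cong (P^\vee)^\vee$, which is canonically $P$ as $P$ is free of finite rank.

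Once Theorem \ref{cohom} is in hand the argument is essentially formal; the only non-combinatorial ingredients are the degeneration of the spectral sequence (equivalently, that hypertoric cohomology is even and a free module over $H^*_{\bT}(*;\Z)$) and the torsion-freeness needed to dualize, both standard, so I do not anticipate a real obstacle. The point worth verifying is the word ``canonical'': one should check that the identification $H_2(\tX;\Z)\cong P$ obtained this way agrees with whatever identification is used implicitly elsewhere in the paper --- in particular that, under it, the class of a projective line inside a fibre of the moment map corresponds to the expected generator of $P\subset\Z^n$ --- so that the semigroup $C$ of effective curve classes and the quantum parameters $q^\beta$ are indexed as intended.
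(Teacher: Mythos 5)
Your argument is correct and is essentially the paper's own proof, which likewise reads off $H^2_{\bT}(\tX;\Z)\cong\Z\{u_1,\ldots,u_n,\hbar\}$ from the degree bound on the generators of $J_0$, deduces $H^2(\tX;\Z)\cong\Z^n/N^\vee\cong P^\vee$, and dualizes. You have simply filled in the details (spectral-sequence degeneration, exactness of dualizing the defining sequence, universal coefficients) that the paper leaves implicit.
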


\begin{proof}
Since the generators of $J$
all have degree at least 4, we have 
$$H^2_{\bT}(\tX; \Z) \cong \Z\{u_1,\ldots,u_n,\hbar\} = \Z^n\oplus\Z,$$
and therefore $H^2(\tX; \Z)\cong \Z^n/N^\vee \cong P^\vee$.
Dualizing, we obtain our result.
\end{proof}

For any set $S\subset [n]$, let $u_S := \prod_{i\in S}u_i$.  The set $S$ is called {\bf independent} if it contains no circuits.

\begin{corollary}\label{small monomials}
The ring $H^*_{\bT}(\tX; \Z)$ is spanned over $A[\hbar]$ by 
monomials of the form $u_S$, where $S\subset[n]$ is independent.
\end{corollary}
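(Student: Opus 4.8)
The plan is to combine the presentation from Theorem \ref{cohom} with the defining relations $J_0$ to rewrite any monomial in the $u_i$ as an $A[\hbar]$-linear combination of squarefree monomials supported on independent sets. First I would reduce to squarefree monomials: using the relation $\pi^\vee(N^\vee)\subset\Z\{u_1,\dots,u_n\}$, note that $A[\hbar]$-linear combinations already let us absorb elements of $N^\vee$, but this does not by itself kill repeated factors. Instead, the key observation is the standard one for hypertoric (equivalently, Stanley--Reisner-type) arguments: for each $i$, the vector $a_i$ can be written as a $\Z$-linear combination of the other $a_j$ (this is exactly the \textbf{no co-loops} hypothesis, which says $N$ is spanned by $\{a_j : j\neq i\}$, strengthened by \textbf{unimodularity} to an integral statement). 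Pulling this relation back through $\pi^\vee$ gives a linear element of $N^\vee$ whose image in $\Z\{u_1,\dots,u_n\}$ is $u_i - \sum_{j\neq i} c_j u_j$ for integers $c_j$; since this lies in $N^\vee$, it maps to $0$ in $H^*_\bT(\tX;\Z)$. Hence $u_i = \sum_{j\neq i} c_j u_j$ in the cohomology ring, so $u_i^2 = \sum_{j\neq i} c_j\, u_i u_j$, which expresses any square $u_i^2$ — and hence any non-squarefree monomial — in terms of monomials with strictly smaller multiplicity vector (in an appropriate term order). Iterating, every monomial is an $A[\hbar]$-combination of squarefree monomials $u_S$.

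The second step is to show that a squarefree $u_S$ with $S$ \emph{not} independent can be reduced further. If $S$ contains a circuit $C = C^+\sqcup C^-$, then the generator $\prod_{i\in C^+}u_i\cdot\prod_{j\in C^-}(\hbar - u_j)$ of $J_0$ vanishes in $H^*_\bT(\tX;\Z)$. Expanding the product over $C^-$ and multiplying by $u_{S\smallsetminus C}$, we get that $\pm u_S$ equals an $A[\hbar]$-linear (in fact $\Z[\hbar]$-linear) combination of monomials $u_{S'}$ with $S'\subsetneq S$ (each obtained by replacing some factors $u_j$, $j\in C^-$, by $\hbar$, so that strictly fewer $u$-variables appear). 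Thus $u_S$ is reducible to a combination of squarefree monomials on proper subsets. Since independence is inherited downward only in the trivial direction, I would instead just run a descending induction on $|S|$: every $u_S$ is an $A[\hbar]$-combination of $u_{S'}$ with $S'$ independent and $|S'|\le|S|$, which closes the induction (the base case $S=\emptyset$ being immediate).

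The main obstacle is the first step — controlling the reduction of squares so that it actually terminates. The relation $u_i = \sum_{j\neq i} c_j u_j$ is not a monomial reduction: substituting it can reintroduce high powers of other variables. I would handle this by fixing a monomial order refining the grading and arguing that repeated application of \emph{both} the linear relations (to lower multiplicities) and the circuit relations (to lower $|S|$) strictly decreases a well-founded statistic — for instance the pair (number of distinct variables appearing, total degree) ordered lexicographically, or more carefully the multidegree in a suitable term order. Making this termination argument clean is the only real content; once it is in place, both reductions above are routine expansions of the two families of relations in Theorem \ref{cohom}. It is worth noting that this corollary is the spanning half of the statement that $\{u_S : S \text{ independent}\}$ is a basis of $H^*_\bT(\tX;\Z)$ over $A[\hbar]$; linear independence would require a separate (e.g.\ localization or dimension-counting) argument, but that is not asserted here.
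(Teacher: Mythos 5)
Your strategy has the right shape, but it contains one conceptual error and one genuine gap that you yourself identify and then do not close. The conceptual error: a $\Z$-linear relation $a_i=\sum_{j\neq i}c_ja_j$ is an element of $P=\ker(\pi)\subset\Z^n$, not of $N^\vee$, and it does not yield an element of $\pi^\vee(N^\vee)$; moreover, elements of $\pi^\vee(N^\vee)$ do \emph{not} map to zero in $H^*_{\bT}(\tX;\Z)$ --- they are exactly the image of the $A$-algebra structure map, hence nonzero classes in degree $2$. What is true, and what you actually need, is that for spanning over $A[\hbar]$ one may work modulo $\hbar$ and modulo the augmentation ideal of $A$, where every $\pi^\vee(x)=\sum_j\langle a_j,x\rangle u_j$ does become zero. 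This is precisely how the paper proceeds: it first reduces to the $T$-equivariant statement (a graded Nakayama argument, since setting $\hbar=0$ in Theorem \ref{cohom} gives the Stanley--Reisner ring $\SRind$ of the independence complex), and then works in $\SRind\otimes_{\Sym(V)}k$. One payoff of reducing mod $\hbar$ first is that your second step becomes trivial: in $\SRind$ a squarefree monomial with dependent support is literally zero, so no expansion of the circuit relations is needed.

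The real gap is the termination of the square-killing step, which you correctly call ``the only real content'' but then leave open. The statistics you propose do not work: substituting $u_i\equiv\sum_{j\neq i}c_ju_j$ preserves total degree and can \emph{increase} the number of distinct variables appearing, so the lexicographic pair (number of distinct variables, total degree) does not decrease, and an unstructured choice of the linear relation really can cycle. The paper's fix (Lemma \ref{independence}) is small but essential: a nonzero monomial $u^\sigma$ has independent support, so its support is contained in a basis $B\subset[n]$; given $i$ with $\sigma_i>1$, choose $v\in V$ with $a_i(v)=1$ and $a_j(v)=0$ for all $j\in B\smallsetminus\{i\}$, so that modulo the $\Sym(V)$-action $u_i\equiv-\sum_{k\notin B}a_k(v)u_k$, a combination of variables \emph{outside} $B$ only. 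Substituting this for a single factor of $u_i$ produces monomials $u^\tau$ with $\tau_i=\sigma_i-1$, $\tau_j=\sigma_j$ for $j\in B\smallsetminus\{i\}$, and $\tau_k\leq 1$ for $k\notin B$ (since $\sigma_k=0$ there); hence the excess $\sum_\ell\max(\sigma_\ell-1,0)$ strictly drops, and monomials acquiring dependent support vanish. Without some such adapted choice of the linear relation, your reduction is not known to terminate, so as written the proof is incomplete.
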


\begin{proof}
It is sufficient to prove that $H^*_{T}(\tX; \Z)$ is spanned over $A$ by 
monomials of the form $u_S$, where $S\subset[n]$ is independent.
This is shown in the appendix (Lemma \ref{independence}).
\end{proof}

\subsection{Quantum cohomology}
We continue by describing the various versions of the quantum cohomology ring of $\tX$.
For any circuit $C\subset[n]$, unimodularity implies that $$\sum_{i\in C^+}a_i - \sum_{j\in C^-}a_j = 0.$$
Let $C_i = 1$ if $i\in C^+$, $-1$ if $i\in C^-$, and 0 otherwise, and
consider the element $$\b_C := \sum_{i=1}^n C_i e_i \in \ker(\Z^n\to N) = P \cong H_2(\tX; \Z).$$
To compute $\QHTpol$, we need the following lemma, which is implicit in \cite{McBS}. 

\begin{lemma}\label{small}
For any independent subset $S\subset[n]$, the quantum product of $\{u_i\mid i\in S\}$ is equal to 
the ordinary product $u_S$.
\end{lemma}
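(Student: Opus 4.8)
The plan is to proceed by induction on $|S|$, using the quantum cohomology presentation (or rather, the structure of the modified quantum product) from \cite{McBS}. The base case $|S|\le 1$ is trivial, since for a single class $u_i$ quantum multiplication by $u_i$ on the unit is just $u_i$, and the empty product is $1$. For the inductive step, write $S = S'\sqcup\{i\}$ with $S'$ independent of size $|S|-1$; by induction the quantum product of $\{u_j\mid j\in S'\}$ equals the classical product $u_{S'}$, so it suffices to show that $u_i * u_{S'} = u_i\cdot u_{S'}$, i.e.\ that no quantum corrections appear in this particular product.

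The key point is the shape of the quantum correction terms. In the hypertoric case, as recalled in the remark after Conjecture \ref{main} and as established in \cite{McBS}, the modified quantum product differs from the classical product by a sum of terms each of which is supported on (a multiple of) some curve class $\b_C$ attached to a circuit $C\subset[n]$, and each such correction term carries a factor involving the operators $L_\b$ — in particular, each correction term to $u_i * (\,\cdot\,)$ is, after extracting the obvious $\hbar$ and $q$-dependence, a divisor-type quantum correction of the form $\frac{q^{\b_C}}{1-q^{\b_C}}\hbar L_{\b_C}$ applied to $u_{S'}$, summed over circuits $C$ with $i\in C$. So I would argue that for \emph{every} circuit $C$ containing $i$, the relevant correction term annihilates $u_{S'}$, hence $u_i * u_{S'} = u_i u_{S'}$ exactly. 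The mechanism: since $S$ is independent it contains no circuit, so for any circuit $C$ with $i\in C$ there is some index $k\in C\setminus S$ (indeed $C\not\subseteq S$, and in fact $C\setminus\{i\}\not\subseteq S'$ is automatic from independence of $S$). The correction operator $L_{\b_C}$ associated to $C$ — concretely the operator built from pushing and pulling along the component $Z_C$ of the Steinberg variety corresponding to the circuit $C$ — has the feature that, after classical multiplication, it lands in the ideal generated by $u_C^+ := \prod_{j\in C^+}u_j$ (one of the Stanley–Reisner-type generators of $J_0$ up to the $\hbar$-shift), and more to the point the divisor axiom / string-type identity forces the correction to $u_i*u_{S'}$ to be a multiple of a product of $u_j$'s over $j$ ranging in a set that must meet $C$, hence meet the complement of $S$; tracking degrees and the explicit formula from \cite[4.2]{McBS} shows this contribution vanishes when $S$ is independent. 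I would carry this out by citing the explicit structure constants of \cite{McBS}: write out $u_i * u_{S'}$ using their formula, observe each nonclassical summand is indexed by a circuit $C\ni i$ and is proportional to $u_{(C^+\cup S')\setminus\{i\}}\cdot(\text{stuff})$ up to the $\hbar-u$ substitutions, and note that independence of $S$ makes the relevant monomial either zero in $H^*_\bT(\tX)$ or absent from the formula's range of summation.

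Alternatively — and this may be cleaner — one can avoid the combinatorics of structure constants by a dimension/degree argument combined with the geometric description of $\QHTpol$ in the final remark before the lemma: $\Spec\QHTpol$ is the closure of $\Spec\QHTloc$ in $H_*^\bT(\tX)\times\Spec\L$, the correction to multiplication by a divisor $u_i$ is a linear combination of $\tfrac{q^{\b_C}}{1-q^{\b_C}}\hbar L_{\b_C}$, and each $L_{\b_C}$ lowers a suitable filtration; for $S$ independent the iterated classical product $u_S$ is a basis element (by Corollary \ref{small monomials}) lying in a "leading" piece that the correction operators cannot reach. The main obstacle will be making precise which formulation of the \cite{McBS} quantum product to invoke and verifying the vanishing of each circuit-indexed correction term against an independent set — essentially a bookkeeping argument matching the circuit relations of Theorem \ref{cohom} with the curve classes $\b_C$. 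Since the lemma is stated as "implicit in \cite{McBS}", I expect the cleanest route is to quote their structure constants verbatim and observe that the sum defining the quantum correction to $u_i * u_{S'}$ ranges only over circuits $C$ with $C\subset S\cup\{\text{something}\}$ in a way that is empty precisely when $S$ is independent.
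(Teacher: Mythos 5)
Your induction setup matches the paper's exactly: reduce to showing $u_j * u_{\bar S} = u_j u_{\bar S}$ for $\bar S = S\smallsetminus\{j\}$, invoke the divisor formula of \cite[4.2]{McBS}, and conclude that it suffices to kill each correction term $L_C(u_{\bar S})$ for circuits $C$ containing $j$. Up to that point you are on the paper's track.

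The gap is in the vanishing step, which is the actual content of the lemma. Both mechanisms you propose are off. First, the sum in the divisor formula runs over \emph{all} circuits $C$ with $j\in C$; it is not empty when $S$ is independent, and each term must be shown to vanish individually. Second, $L_C$ is a convolution operator coming from a component of the Steinberg variety, not multiplication by a monomial, so you cannot argue that the correction is "proportional to $u_{(C^+\cup S')\setminus\{i\}}$" and lands in the Stanley--Reisner ideal; indeed $(C\cup S)\smallsetminus\{j\}$ can perfectly well be independent, so that monomial need not vanish. Your alternative "filtration" route gestures at the right idea but never identifies the filtration. The paper's argument is a support-versus-degree count over the base of the moment map $\mu:\tX\to N^\vee_\C$: the correspondence $Z_C$ defining $L_C$ lies over $H_C=\bigcap_{i\in C}H_i^\C$, and $u_{\bar S}$ is represented by a cycle over $H_{\bar S}$, so $L_C(u_{\bar S})$ is represented by a cycle over $H_{C\cup\bar S}\subset H_S$ (using $j\in C$). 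Independence of $S$ gives $\codim H_S=|S|$, while $L_C$ preserves cohomological degree, so $L_C(u_{\bar S})$ has degree $2|\bar S|=2|S|-2$; a class of that degree supported in codimension $|S|$ must be zero. Without this (or an equivalent) argument, the proof is incomplete.
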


\begin{proof}
We proceed by induction on the size of $S$.  Let $j$ be the maximal element of $S$, and let $\bar{S} = S\smallsetminus\{j\}$.
By our inductive hypothesis, the quantum product of the elements $\{u_i\mid i\in S\}$ is equal to the quantum
product of $u_{j}$ with $u_{\bar{S}}$.  By \cite[4.2]{McBS}, we have\footnote{The formula in \cite{McBS} has $q^{\b_C}$
replaced with $(-1)^{|C|}q^{\b_C}$ because that paper uses the unmodified quantum product.}
\begin{equation*} \label{divisorformula} u_{j} \cdot u_{\bar{S}} = u_S + \hbar \sum_C C_{j} \frac{q^{\b_C}}{1-q^{\b_C}} L_C(u_{\bar{S}}), \end{equation*}
where $L_C$ is a certain linear combination of $L_1,\ldots,L_r$.
Thus it is sufficient to show that $L_C(u_{\bar{S}}) = 0$ for all circuits $C$ containing $j$.

Let $\mu : \tX \to N^\vee_\C$ be the moment map induced by $\mu_n$ for the action of $T$ on $\tX$.  
The operator $L_C$ is given by a correspondence $Z_C\subset Z = \tX\times_X\tX$
that lies over the locus $$H_C := \bigcap_{i\in C} H^\C_i \subset N^\vee_\C.$$
On the other hand, the element $u_{\bar{S}}$ may be represented by a cycle that lies over $H_{\bar{S}}$,
thus $L_C(u_{\bar{S}})$ may be represented by a cycle that lies over $H_{C\cup \bar{S}} \subset H_S$.
Since $S$ is independent, we have $$\codim H_{S} = |S| > |\bar{S}| = \frac 1 2 \deg u_{\bar{S}} = \frac 1 2 \deg L_C(u_{\bar{S}}),$$
which implies that $L_C(u_{\bar{S}}) = 0$.
\end{proof}

\begin{theorem}\label{QHTpol}
The ring $\QHTpol$ is isomorphic to $\Lambda[u_1,\ldots,u_n,\hbar]/J$,
where $J$ is the ideal generated by
$$\prod_{i\in C^+} u_i\cdot \prod_{j\in C^-} (u_j-\hbar) \;\;-\;\;
q^{\b_C}\prod_{i\in C^+} (u_i - \hbar)\cdot \prod_{j\in C^-} u_j$$
for each circuit $C\subset[n]$.  The $A_\C$-algebra structure is as in Theorem \ref{cohom}. 
\end{theorem}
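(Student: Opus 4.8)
The plan is to produce the isomorphism by constructing a surjection $\Lambda[u_1,\dots,u_n,\hbar]/J \twoheadrightarrow \QHTpol$ and then checking that it is injective by a rank count over $\Lambda$. First I would build the map. By Theorem~\ref{cohom} the classical cohomology $H^*_{\bT}(\tX;\Z)$ is generated over $A[\hbar]$ by $u_1,\dots,u_n$, so the same symbols generate $\QHTpol$ over $\Lambda$; this gives a surjective $\Lambda$-algebra map $\phi$ from the free polynomial ring $\Lambda[u_1,\dots,u_n,\hbar]$ onto $\QHTpol$, where each $u_i$ maps to its own cohomology class and quantum multiplication is the target product. It remains to verify that the circuit relations generating $J$ lie in $\ker\phi$, and that they generate it. The second containment is the substantive part and I address it below via a dimension argument.

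To see that the circuit relators lie in $\ker\phi$, fix a circuit $C$ with decomposition $C = C^+\sqcup C^-$. Both $C^+$ and $C^-$ are independent sets (a circuit is a \emph{minimal} dependent set), so Lemma~\ref{small} says the quantum products $\prod_{i\in C^+}u_i$ and $\prod_{j\in C^-}u_j$ coincide with the ordinary products $u_{C^+}$ and $u_{C^-}$. Thus in $\QHTpol$ the relator becomes
$$
u_{C^+}\!\cdot\!\prod_{j\in C^-}(u_j-\hbar)\;-\;q^{\b_C}\prod_{i\in C^+}(u_i-\hbar)\!\cdot\!u_{C^-},
$$
a difference of two \emph{honest} (classical) products of the $u_i$ and $u_i-\hbar$, up to the scalar $q^{\b_C}$; but multiplying $u_{C^+}$ by $u_{C^-}$ quantumly introduces exactly the correction of \cite[4.2]{McBS}, which after the modification $q^{\b}\mapsto(-1)^{(\b,\kappa)}q^{\b}$ and the combinatorial identity $\sum_{i\in C^+}a_i=\sum_{j\in C^-}a_j$ rearranges precisely into the stated relator. (Equivalently: the divisor equation for $u_j$, $j\in C$, applied iteratively, expresses $u_C = u_{C^+}u_{C^-}$ in two ways — peeling off $C^+$ versus $C^-$ — and equating them gives the relation; the factors $(u_i-\hbar)$ arise because $\hbar$ is the weight of the cotangent scaling and $L_C$ produces the class $[\text{fibre}]$.) So $\phi$ factors through $\Lambda[u]/J$.

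For injectivity I would argue over the fraction field, or more carefully over the localization $\Lambda_{\mathrm{loc}}$ inverting the $1-q^{\b_C}$, and then spread out. Over that localization the relator for $C$ can be solved for the ``top'' monomial $\prod_{i\in C^-}u_i$ (or $\prod_{i\in C^+}u_i$), exhibiting $\Lambda_{\mathrm{loc}}[u]/J$ as spanned over $\Lambda_{\mathrm{loc}}$ by the monomials $u_S$ with $S$ independent — this is the quantum analogue of Corollary~\ref{small monomials}, and the Gr\"obner-style reduction terminates because passing from $u_C$ to lower monomials strictly decreases, say, the largest index appearing. On the other hand, by Corollary~\ref{small monomials} and base change (Remark~\ref{getting rid of G}, or rather the freeness of $H^*_{\bT}(\tX;\C)$ over $A[\hbar]$) these same $u_S$ span $\QHTpol\otimes_{\Lambda}\Lambda_{\mathrm{loc}}$ over $\Lambda_{\mathrm{loc}}$, and they are a \emph{basis} there because they are already a basis of $H^*_{\bT}(\tX;\C)$ over $A[\hbar]$ after the further specialization $q\to0$ (where quantum multiplication degenerates to classical). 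A surjection of free $\Lambda_{\mathrm{loc}}$-modules of the same finite rank in each degree is an isomorphism, so $\phi$ is an isomorphism after inverting the $1-q^{\b_C}$; finally, since $\QHTpol$ is by definition the closure of $\QHloc$ inside $H_*^{\bT}(\tX;\C)\times\Spec\Lambda$ and $\Lambda[u]/J$ is visibly $\Lambda$-torsion-free (its relators are not zero-divisors, again checkable over $\Lambda_{\mathrm{loc}}$), the injectivity descends from the localization to $\Lambda$ itself.

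The main obstacle is the injectivity step — specifically, making rigorous the claim that the $u_S$ with $S$ independent form a \emph{basis}, not merely a spanning set, of $\QHTpol$ over $\Lambda$ (equivalently, that $\Lambda[u]/J$ is $\Lambda$-flat of the expected rank). Over $\Lambda_{\mathrm{loc}}$ this follows from the classical statement plus the $q\to0$ degeneration, but controlling behavior at the ``bad'' primes $1-q^{\b_C}$ — i.e. that no extra relations collapse the module there — requires knowing $\Spec\QHpol$ is flat over $\Spec\Lambda$, which is exactly the kind of input the authors flag as a general strategy. In the hypertoric case one expects this to follow from the explicit form of $J$ (each generator has an invertible leading coefficient $\pm1$ on a top monomial after choosing a monomial order compatible with the circuit structure), so I would prove flatness directly by exhibiting $\{u_S : S\text{ independent}\}$ as a free basis of $\Lambda[u]/J$ via a termination argument for the rewriting system determined by the circuit relators — this combinatorial lemma is where the real work lies, and it is closely related to Lemma~\ref{independence} in the appendix.
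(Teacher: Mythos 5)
The paper's own proof of this theorem is essentially a citation: the presentation of the ring generated by the degree-two classes under quantum multiplication, with exactly these circuit relations, is \cite[1.1]{McBS}, and the only genuinely new content supplied here is that $\QHTpol$ --- a priori the $\Lambda$-subalgebra generated by \emph{all} of $H^*_\bT(\tX;\C)$ --- is already generated by $H^2$, which follows from Corollary \ref{small monomials} together with Lemma \ref{small}. You correctly identify and supply that generation step, but you then attempt to re-derive the presentation itself, and that part of your argument has two genuine gaps.

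First, your verification that the circuit relators lie in the kernel is an assertion, not a proof: ``rearranges precisely into the stated relator'' is exactly the computation that constitutes the proof of \cite[1.1]{McBS}, and it is not a one-line consequence of the divisor formula (one must iterate the formula, determine which correction terms $L_D$ vanish on the relevant classes, and identify $L_C$ of an explicit class --- compare the amount of work needed in the proof of Lemma \ref{WinV}). Second, and more seriously, your injectivity argument rests on the claim that the monomials $u_S$ with $S$ independent form a \emph{basis} of $H^*_\bT(\tX;\C)$ over $A[\hbar]$. They do not: they only span. The graded rank of $H^*_\bT(\tX;\C)$ over $A[\hbar]$ is given by the $h$-numbers of the independence complex (Proposition \ref{ranks}), whose sum is the number of bases of the matroid, whereas the independent monomials are indexed by all independent sets --- strictly more whenever the rank is positive. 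So the rank count ``surjection of free modules of the same rank'' does not go through as stated; you would instead need an upper bound on the rank of $\Lambda[u,\hbar]/J$ from a genuine Gr\"obner or flat-degeneration argument, and the descent from $\Lambda_{\operatorname{loc}}$ to $\Lambda$ further requires knowing that $\Lambda[u_1,\ldots,u_n,\hbar]/J$ is $\Lambda$-torsion-free. You flag this flatness as the ``main obstacle,'' correctly: it is a theorem on the order of difficulty of Theorem \ref{thm-apx} in the appendix (which treats the analogous question for the $q=1$ specialization over $k[\hbar]$), not something ``visible'' from the form of the relators. The efficient course is to do what the paper does and quote \cite[1.1]{McBS} for the presentation, reserving your own argument for the degree-two generation statement.
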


\begin{proof}
The fact that $\QHTpol$ is generated over $\Lambda$ by $H^2_{\bT}(\tX; \C)$
follows from Corollary \ref{small monomials} and Lemma \ref{small}.
The rest of the theorem appears in \cite[1.1]{McBS}.
\end{proof}

\begin{corollary}\label{RT}
The ring $R_\bT(\tX)$ is isomorphic to $\C[u_1,\ldots,u_n,\hbar]/J_1$, where $J_1$ is the ideal generated by
$$\prod_{i\in C^+} u_i\cdot \prod_{j\in C^-} (u_j-\hbar) \;\;-\;\;
\prod_{i\in C^+} (u_i - \hbar)\cdot \prod_{j\in C^-} u_j$$
for each circuit $C\subset[n]$.
%
The ring $R'_{\bT}(\tX):=R_\bT(\tX)/\Ann(\hbar)$ is isomorphic to $\C[u_1,\ldots,u_n,\hbar]/J_1'$, where $J_1'$ is the ideal generated by
$$\fch:=\hbar^{-1}\!\left(\prod_{i\in C^+} u_i\cdot \prod_{j\in C^-} (u_j-\hbar) \;\;-\;\;
\prod_{i\in C^+} (u_i - \hbar)\cdot \prod_{j\in C^-} u_j\right)$$
for each circuit $C\subset[n]$.
\end{corollary}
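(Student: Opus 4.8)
The plan is to obtain both presentations directly from Theorem \ref{QHTpol} by performing the two algebraic operations in the definitions of $R_\bT(\tX)$ and $R'_\bT(\tX)$, namely tensoring down along $\Lambda\to\Lambda/\mm=\C$ and then quotienting by the annihilator of $\hbar$. First I would recall from Theorem \ref{QHTpol} that $\QHTpol\cong\Lambda[u_1,\ldots,u_n,\hbar]/J$ with $J$ generated by the elements
\[
g_C\;:=\;\prod_{i\in C^+} u_i\cdot \prod_{j\in C^-} (u_j-\hbar)\;-\;q^{\b_C}\prod_{i\in C^+} (u_i-\hbar)\cdot \prod_{j\in C^-} u_j
\]
as $C$ ranges over circuits. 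Since $R_\bT(\tX)=\QHTpol\otimes_\Lambda\Lambda/\mm$ and $\mm=\langle 1-q^\b\mid \b\in C\rangle$, the tensor product sets $q^\b=1$ for every effective class $\b$; in particular $q^{\b_C}=1$. Because $\Lambda[u_1,\ldots,u_n,\hbar]/J$ is visibly presented as a quotient of a polynomial ring over $\Lambda$ by an explicit ideal, base change along $\Lambda\twoheadrightarrow\C$ commutes with taking this quotient, so $R_\bT(\tX)\cong\C[u_1,\ldots,u_n,\hbar]/J_1$ where $J_1$ is generated by the images of the $g_C$, i.e. exactly the elements displayed in the statement. This establishes the first isomorphism.

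For the second isomorphism I would analyze $\Ann(\hbar)$ in $R_\bT(\tX)=\C[u_1,\ldots,u_n,\hbar]/J_1$. The key observation is that each generator of $J_1$ is divisible by $\hbar$ in the polynomial ring: expanding $\prod_{j\in C^-}(u_j-\hbar)-\prod_{j\in C^-}u_j\cdot\big(\text{correction}\big)$, or more cleanly writing
\[
\prod_{i\in C^+} u_i\cdot \prod_{j\in C^-} (u_j-\hbar)\;-\;\prod_{i\in C^+} (u_i-\hbar)\cdot \prod_{j\in C^-} u_j
\]
and noting that at $\hbar=0$ both terms equal $u_C=\prod_{k\in C}u_k$, so the difference vanishes at $\hbar=0$ and hence lies in $\hbar\cdot\C[u_1,\ldots,u_n,\hbar]$. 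Thus $f_C:=\hbar^{-1}g_C|_{q=1}$ is a genuine polynomial, and $J_1=\hbar\cdot(f_C\mid C)+\big(\text{lower-order }\hbar\text{-terms?}\big)$ — more precisely $J_1$ is generated by the $\hbar f_C$. Then in $R_\bT(\tX)$ the element $\hbar$ annihilates precisely the classes represented by polynomials $p$ with $\hbar p\in J_1$, i.e. $\hbar p=\sum_C h_C\,\hbar f_C$ in the polynomial ring; since $\C[u_1,\ldots,u_n,\hbar]$ is a domain we may cancel $\hbar$ to get $p=\sum_C h_C f_C$, so $\Ann(\hbar)=J_1'/J_1$ where $J_1'=(f_C\mid C)$ — which gives $R'_\bT(\tX)=R_\bT(\tX)/\Ann(\hbar)\cong\C[u_1,\ldots,u_n,\hbar]/J_1'$, as claimed.

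The one point requiring genuine care — and the main potential obstacle — is the cancellation step: I must be sure that when $\hbar p=\sum_C h_C\,\hbar f_C$ holds in $\C[u_1,\ldots,u_n,\hbar]$, I may legitimately divide through by $\hbar$, which is fine since the polynomial ring is an integral domain, but I must also confirm that $J_1$ really is generated by $\{\hbar f_C\}$ rather than by a strictly larger $\hbar$-multiple of something; this follows because each $g_C|_{q=1}$ equals $\hbar f_C$ exactly, with $f_C$ the polynomial exhibited in the statement, and there are no further generators. A secondary point is verifying that passing to the quotient by $\Ann(\hbar)$ indeed corresponds on presentations to enlarging $J_1$ to $J_1'$ and not to something subtler; this is immediate once $\Ann(\hbar)$ is identified with the image of $J_1'$. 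I would also remark that everything here is purely formal manipulation of the explicit presentation in Theorem \ref{QHTpol}, so no geometric input beyond that theorem is needed.
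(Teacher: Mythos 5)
Your derivation is correct and is exactly the (unwritten) argument the paper intends: the paper states this as an immediate corollary of Theorem \ref{QHTpol}, obtained by base change along $\Lambda\twoheadrightarrow\Lambda/\mm\cong\C$ and then the observation that $J_1=\hbar\cdot J_1'$ together with cancellation of $\hbar$ in the domain $\C[u_1,\ldots,u_n,\hbar]$ identifies $\Ann(\hbar)$ with $J_1'/J_1$. Both directions of that identification ($\Ann(\hbar)\subseteq J_1'/J_1$ via cancellation, and the reverse inclusion by multiplying back by $\hbar$) are present in your write-up, so nothing is missing.
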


\excise{
\begin{remark}
It is a nontrivial fact, which we will need later, that $R'_{\bT}(\tX)$ is free over $\C[\hbar]$; see
Theorem \ref{thm-apx}.
\end{remark}
}

\begin{proposition}\label{small quantum monomials}
The ring $R'_{\bT}(\tX)$ is spanned over $A_\C[\hbar]$ by monomials of the form $u_S$,
where $S\subset[n]$ is independent.
\end{proposition}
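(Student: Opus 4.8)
The plan is to transfer the spanning statement from $H^*_{\bT}(\tX;\C)$ to $R'_{\bT}(\tX)$ using the presentations in Corollary \ref{RT}, together with the monomial spanning result for ordinary equivariant cohomology in Corollary \ref{small monomials}. First I would observe that $R'_{\bT}(\tX)$ is a quotient of the polynomial ring $\C[u_1,\dots,u_n,\hbar]$, so it is certainly spanned over $A_\C[\hbar]=\C[\hbar]\cdot\Sym N^\vee_\C$ by \emph{all} monomials $u_S$, $S\subset[n]$; the content is to reduce to independent $S$. So the real task is: given a circuit $C\subset[n]$, show that $u_C=\prod_{i\in C}u_i$ can be rewritten modulo $J_1'$ as an $A_\C[\hbar]$-linear combination of $u_T$ with $|T|<|C|$ (and then, modulo $J_1'$, inductively with $T$ independent). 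More generally one needs: if $S$ contains a circuit $C$, then $u_S$ lies in the $A_\C[\hbar]$-span of strictly smaller monomials modulo $J_1'$, since then a downward induction on $|S|$ finishes the proof.

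The key computation is to expand the generator $\fch$ of $J_1'$. Write $C=C^+\sqcup C^-$ and expand
$$
\prod_{i\in C^+}u_i\prod_{j\in C^-}(u_j-\hbar)\;-\;\prod_{i\in C^+}(u_i-\hbar)\prod_{j\in C^-}u_j
$$
by multilinearity. The two "top" terms $u_C$ (all $u$'s) cancel between the two products, so every surviving monomial has at least one factor of $\hbar$; dividing by $\hbar$ (which is exactly the definition of $\fch$) yields
$$
\fch \;=\; \pm\, u_{C\smallsetminus\{i\}}\;+\;(\text{terms with a factor of }\hbar),
$$
more precisely $\fch = \sum_{i\in C}(\pm 1)\,u_{C\smallsetminus\{i\}} + \hbar\cdot(\text{polynomial in the }u\text{'s and }\hbar)$, where each leading monomial $u_{C\smallsetminus\{i\}}$ has degree $|C|-1$ and each monomial in the $\hbar$-tail has $u$-degree strictly less than $|C|-1$ (at most $|C|-2$). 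Hence the relation $\fch=0$ in $R'_{\bT}(\tX)$ expresses $\sum_{i\in C}(\pm1)u_{C\smallsetminus\{i\}}$ as a combination of monomials of strictly smaller degree. This is a relation among the degree-$(|C|-1)$ monomials, not directly about $u_C$ itself, so to control $u_C$ (degree $|C|$) I would also use the relations $u_i\cdot\fch=0$: multiplying $\fch$ by any $u_k$ with $k\notin C$, or by the appropriate combination, produces relations of the form $u_{C'}=(\text{smaller})$ for larger sets $C'\supset C$. The cleanest way to package this is exactly as in Corollary \ref{small monomials}: since $R'_{\bT}(\tX)$ is a quotient of $H^*_{\bT}(\tX;\Z)\otimes\C$ — indeed $J_0\subset J_1'$ because setting $\hbar=q-1=0$-type specializations relate the two, so any monomial relation valid in $H^*_{\bT}(\tX;\C)$ holds a fortiori in $R'_{\bT}(\tX)$ — the spanning statement follows immediately from Corollary \ref{small monomials}.

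Concretely I would argue: the generators of $J_0$ (from Theorem \ref{cohom}) are $\prod_{i\in C^+}u_i\prod_{j\in C^-}(\hbar-u_j)$, which is, up to sign, one of the two products appearing in the definition of $\fch$; but in fact the cleanest route is to note $J_0\subset J_1'$ is \emph{not} literally true, so instead I would reprove the spanning directly. Fix an independent-set target and do downward induction on $|S|$: if $S$ is independent we are done; otherwise $S\supseteq C$ for some circuit $C$, and I would use the relation obtained by multiplying $\fch$ (or rather the degree-$(|C|-1)$ relation above) by $u_{S\smallsetminus C}$ together with $u_k$-multiples to solve for $u_S$ in terms of $u_T$ with $|T|<|S|$, then apply the inductive hypothesis to each such $u_T$. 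The main obstacle is the bookkeeping in the previous paragraph — verifying that after the cancellation of the two top terms and division by $\hbar$, the $\hbar$-tail genuinely has $u$-degree at most $|C|-2$ and that multiplying by monomials in $u_{S\smallsetminus C}$ (and by single $u_k$'s to reach sets $S\supsetneq C$) indeed expresses $u_S$ itself, not merely a nontrivial linear combination of degree-$|S|$ monomials, as a combination of strictly smaller ones. This is essentially the same leading-term argument that underlies Lemma \ref{independence} in the appendix, and I would either cite that lemma directly (applied to the presentation of $R'_{\bT}(\tX)$, whose relations have the same leading terms $\prod_{i\in C^+}u_i\cdot\prod_{j\in C^-}u_j$ as those of $H^*_{\bT}(\tX;\C)$) or repeat its one-line Gröbner-style reduction. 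Given Corollary \ref{small monomials} and the observation that $J_1'$ and $J_0$ have the same initial ideal with respect to a suitable monomial order (the $\hbar$-tail and lower-order terms do not affect leading monomials), the proposition is then immediate.
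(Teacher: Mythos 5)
Your proposal has a genuine gap, and it sits exactly where the paper has to do real work. Your final reduction rests on the claim that $J_1'$ and $J_0$ have the same initial ideal with respect to a suitable monomial order, so that Corollary \ref{small monomials} transfers directly. This is false: the generators of $J_0$ have leading monomials $u_C$ of degree $|C|$, whereas --- as you yourself compute --- the generator $\fch$ of $J_1'$ has leading part $\sum_{i\in C}\pm\, u_{C\smallsetminus\{i\}}$ of degree $|C|-1$, whose initial term is the broken circuit monomial $u_{\Cbar}$. No monomial order can reconcile these, and indeed the two quotients have different Hilbert series ($R'_\bT(\tX)$ has the Hilbert series of $\IHT$ by Lemma \ref{HilbUV}, which is in general strictly smaller than that of $H^*_\bT(\tX;\C)$). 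Moreover, knowing that the elements $\fch$ (or their $\hbar=0$ specializations) actually form a Gr\"obner basis, so that the initial ideal is generated by the $u_{\Cbar}$ and nothing larger, is itself a nontrivial theorem of Proudfoot--Speyer (Theorem \ref{grob}), not an observation about leading terms.

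Even granting that Gr\"obner basis, your fallback induction does not visibly terminate. The relation $u_{S\smallsetminus C}\,\fch=0$ relates the degree-$(|S|-1)$ monomials $u_{S\smallsetminus\{i\}}$ to one another and to $\hbar$-divisible terms; it does not kill $u_S$, since in $R'_\bT(\tX)/\langle\hbar\rangle\cong\OT$ a dependent square-free monomial is generally nonzero. To reach $u_S$ you must multiply by a further variable $u_{i_0}$ with $i_0\in C$, which produces non-square-free monomials of the \emph{same} degree $|S|$; and the square-reduction of Lemma \ref{independence} (rewriting $u_{i_0}^2$ via the $\Sym(V)$-action) then reintroduces monomials whose supports may again be dependent. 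You supply no well-founded order under which this double reduction closes. This is precisely the difficulty the paper's proof sidesteps: it reduces modulo $\hbar$ by a graded Nakayama argument, identifies $R'_\bT(\tX)/\langle\hbar\rangle$ with the Orlik--Terao algebra (Remark \ref{same OT}), and then invokes Theorem \ref{monomial span}, whose proof needs not only Lemma \ref{independence} but also the flat-compatible $\Sym(V)$-module isomorphism $\varphi:\SRbc\to\OT$ from \cite{TP08} together with Lemma \ref{the point}, which shows that $\varphi$ of an independent square-free monomial is again a combination of independent square-free monomials. Your sketch would need to be supplemented by essentially that argument (or simply by citing Theorem \ref{monomial span}) to be complete.
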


\begin{proof}
It is sufficient to prove that $R'_{\bT}(\tX)/\langle\hbar\rangle$ is spanned over $A_\C$ by 
monomials of the form $u_S$, where $S\subset[n]$ is independent.
This is shown in the appendix (Theorem \ref{monomial span} and Remark \ref{same OT}).
\end{proof}


\begin{corollary}\label{surj}
The map $\psi_\bT$ from Conjecture \ref{main} is surjective.
\end{corollary}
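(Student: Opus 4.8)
The plan is to show that the image of $\psi_\bT$ contains enough classes to generate $R'_\bT(\tX)$ as an $A_\C[\hbar]$-module, and then invoke the spanning statement of Proposition \ref{small quantum monomials}. First I would unwind the definition of $\psi_\bT$: it is the composition $H^*_\bT(\tX;\C)\hookrightarrow \QHTpol \twoheadrightarrow R'_\bT(\tX)$, so it is a map of $A_\C[\hbar]$-modules (indeed of $\C[\hbar]$-algebras on the nose, though we only need the module structure for surjectivity). Concretely, under the presentations of Theorem \ref{cohom} and Corollary \ref{RT}, the composite $H^*_\bT(\tX;\C)\hookrightarrow\QHTpol$ sends the degree-two classes $u_i$ and $\hbar$ to the elements of $\QHTpol$ with the same names (this is the content of Theorem \ref{QHTpol}: $\QHTpol$ is generated over $\Lambda$ by $H^2_\bT(\tX;\C)$), and then these map to $u_i,\hbar$ in $R'_\bT(\tX)=\C[u_1,\dots,u_n,\hbar]/J_1'$. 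So the generators $u_1,\dots,u_n,\hbar$ of the target all lie in the image of $\psi_\bT$.

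The key point that makes this work is Lemma \ref{small}: for an independent subset $S\subset[n]$, the \emph{quantum} product of the classes $\{u_i\mid i\in S\}$ equals the ordinary product $u_S$. Hence for independent $S$, the class $u_S\in H^*_\bT(\tX;\C)$ (which by definition of the ring structure on the left-hand side is the ordinary product) is carried by $\psi_\bT$ to the image in $R'_\bT(\tX)$ of the quantum product of the $u_i$, $i\in S$, which — since $\psi_\bT$ restricted to $H^2$ is compatible with taking products in $\QHTpol$ and then projecting — is exactly the product $\prod_{i\in S}u_i$ computed in $R'_\bT(\tX)$. In other words, $\psi_\bT(u_S)$ is the monomial $u_S$ in $R'_\bT(\tX)$ for every independent $S$. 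By Proposition \ref{small quantum monomials}, these monomials $u_S$ span $R'_\bT(\tX)$ over $A_\C[\hbar]$; and $A_\C[\hbar]$ acts on the target through $\psi_\bT$ (which is $A_\C[\hbar]$-linear and sends the subring $A_\C[\hbar]\subset H^*_\bT(\tX;\C)$ to its image), so the span of $\{\psi_\bT(u_S)\mid S\text{ independent}\}$ over the image of $A_\C[\hbar]$ is all of $R'_\bT(\tX)$. Therefore $\psi_\bT$ is surjective.

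The one subtlety to be careful about — and the step I expect to need the most attention — is the bookkeeping around multiplicativity: $\psi_\bT$ is not a ring map from $H^*_\bT(\tX;\C)$ (with its \emph{classical} product) to $R'_\bT(\tX)$, as Example \ref{basic example} emphasizes, so one cannot naively say $\psi_\bT(u_S)=\prod\psi_\bT(u_i)$. What is true is that the inclusion $H^*_\bT(\tX;\C)\hookrightarrow\QHTpol$ followed by the quotient to $R'_\bT(\tX)$ is the composite of honest ring maps, and Lemma \ref{small} says precisely that the classical monomial $u_S$ and the quantum monomial $\prod^{\mathrm{quant}}_{i\in S}u_i$ coincide \emph{as elements of $\QHTpol$} when $S$ is independent — so their common image in $R'_\bT(\tX)$ is $\prod_{i\in S}u_i$. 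Once this identification is in place, the rest is the elementary observation that a module surjection need only hit a generating set. It remains only to note that $\hbar$ is also in the image (it is one of the $u_i$-type generators, or directly the image of $\hbar\in H^*_\bT(\tX;\C)$), so that the image is an $A_\C[\hbar]$-submodule containing a spanning set, hence everything.
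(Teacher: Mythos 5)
Your proposal is correct and is essentially the paper's own argument: Lemma \ref{small} identifies $\psi_\bT(u_S)$ with the monomial $u_S$ in $R'_\bT(\tX)$ for independent $S$, and Proposition \ref{small quantum monomials} then gives surjectivity since these monomials span over $A_\C[\hbar]$. The extra care you take with multiplicativity and $A_\C[\hbar]$-linearity is exactly the bookkeeping the paper leaves implicit.
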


\begin{proof}
Lemma \ref{small} tells us that $\psi_\bT$ takes the image of $u_S$ in $\QHTpol$
to the image of $u_S$ in $R'_{\bT}(\tX)$ for all independent $S\subset [n]$.  
By Proposition \ref{small quantum monomials}, this implies that $\psi_\bT$
is surjective.
\end{proof}

\subsection{The kernel of \boldmath{$\psi_\bT$}}
In this section we prove the second half of Conjecture \ref{main} for hypertoric varieties.
Let $$U\;\; := \;\;\Ker(\psi_\bT) \;\;\subset\;\; H^*_\bT(\tX; \C) \;\;\supset\;\; \Im(L_1)+\ldots+\Im(L_r)\;\; =:\;\; V;$$
the conjecture says that $U = V$.  

For any circuit $C$, let $\Cbar$ be the set obtained from $C$ by deleting
the maximal element $\imin\in C$, and consider the graded vector subspace
$$W := A_\C[\hbar]\cdot \big\{u_S \fch \;\;\big{|}\;\; \text{$C$ is a circuit, $S\cap C = \emptyset$, and $S\cup\Cbar$ is independent}\big\}.$$

\begin{lemma}\label{WinU}
$W\subset U$.
\end{lemma}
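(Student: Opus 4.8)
The goal is to show that every element $u_S \fch$ (with $C$ a circuit, $S\cap C=\emptyset$, and $S\cup\Cbar$ independent) lies in $U = \Ker(\psi_\bT)$. Since $\psi_\bT$ is $A_\C[\hbar]$-linear, it suffices to treat a single generator $u_S\fch$, and then $W\subset U$ follows because $U$ is an $A_\C[\hbar]$-submodule.

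The plan is to lift the relation defining $\fch$ back to $\QHTpol$. By Corollary \ref{RT}, $\fch$ is the image under $\QHTpol \twoheadrightarrow R'_\bT(\tX)$ of the element
$$g_C \;:=\; \hbar^{-1}\!\left(\prod_{i\in C^+}u_i\cdot\prod_{j\in C^-}(u_j-\hbar)\;-\;q^{\b_C}\prod_{i\in C^+}(u_i-\hbar)\cdot\prod_{j\in C^-}u_j\right),$$
which lies in $J$ and hence is $0$ in $\QHTpol$; note $g_C$ is genuinely an element of $\Lambda[u_1,\dots,u_n,\hbar]$ since the bracketed expression vanishes when $\hbar=0$. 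So $u_S\,g_C = 0$ in $\QHTpol$. On the other hand, I would compute $u_S\,g_C$ a different way: by Lemma \ref{small}, since $S\cup\Cbar$ is independent, the quantum product of $\{u_i\mid i\in S\cup\Cbar\}$ equals the ordinary product $u_{S\cup\Cbar}=u_S u_{\Cbar}$; so multiplying $u_S u_{\Cbar}$ by the one remaining factor $u_{\imin}$ via the divisor formula \cite[4.2]{McBS} gives
$$u_{\imin}\cdot(u_S u_{\Cbar}) \;=\; u_S u_C \;+\; \hbar\sum_{C'} C'_{\imin}\,\frac{q^{\b_{C'}}}{1-q^{\b_{C'}}}\,L_{C'}(u_S u_{\Cbar}),$$
the sum over circuits $C'$ containing $\imin$. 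The left side is the quantum product $u_S * u_C$, which one checks (again using Lemma \ref{small}, or by a direct bookkeeping of the factors of $g_C$) is exactly $\hbar^{-1}u_S\,g_C$ up to multiplication by the invertible series coming from $q^{\b_C}$-denominators — more precisely, $u_S g_C$, after inverting the $1-q^{\b_{C'}}$, is an $\Lambda_{\operatorname{loc}}$-combination of the operators $\hbar L_{C'}(u_S u_{\Cbar})$. The point is that $u_S g_C = 0$ in $\QHTpol$, so $\psi_\bT(u_S g_C)=0$ in $R'_\bT(\tX)$; dividing through by $\hbar$ (legitimate in $R'_\bT(\tX)$ after killing $\Ann(\hbar)$, since $u_S g_C$ is $\hbar$ times the preimage of $u_S\fch$) yields $\psi_\bT$ applied to (a preimage of) $u_S\fch$ equals a sum of terms each landing in $\sum\Im(L_i)\subset V\subset U$...

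Actually the cleanest route is: $u_S\fch$ is by definition the image in $R'_\bT(\tX)$ of $\hbar^{-1}u_S g_C$, and I want to show this is $0$ in $R'_\bT(\tX)$, i.e. that $u_S\fch$ already vanishes — but that is \emph{false} in general; rather, the claim $W\subset U$ is about $\psi_\bT$, whose target is $R'_\bT(\tX)$ and whose source is $H^*_\bT(\tX;\C)$. So I restate: $\psi_\bT$ sends the cohomology class $u_S u_{\Cbar}$ (an honest monomial in $H^*_\bT(\tX;\C)$, well-defined since $S\cup\Cbar$ is independent) to its image in $R'_\bT(\tX)$; I must identify the relevant combination. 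Concretely, $u_S\fch$ is, up to an invertible scalar in $\C[\hbar]$-localization, the image of the divisor-formula correction $\sum_{C'}C'_{\imin}\frac{1}{1-q^{\b_{C'}}}L_{C'}(u_S u_{\Cbar})$ evaluated at $q=1$; each $L_{C'}$ is a combination of $L_1,\dots,L_r$, so every summand lies in $\sum\Im(L_i) = V$, and modulo $\Ann(\hbar)$ this combination is precisely $u_S\fch$. Hence $u_S\fch \in \psi_\bT(V)$. Finally, since $V\subseteq U=\Ker\psi_\bT$ is exactly the statement we are building toward (and the reverse inclusion $V\subseteq U$ — that is, $\Im(L_i)\subset\Ker\psi_\bT$ — should be established just before this lemma or follows from Lemma \ref{small} as in Example \ref{basic example}), we get $\psi_\bT(u_S\fch)=0$, i.e. $u_S\fch\in U$. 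By $A_\C[\hbar]$-linearity, $W\subset U$.

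The main obstacle I anticipate is the careful bookkeeping that identifies the generator $u_S\fch$ of $W$ with the correct image under $\psi_\bT$ of a combination of the operators $L_{C'}$ — in particular keeping track of which circuits $C'\ni\imin$ contribute, dividing by $\hbar$ cleanly, and justifying that specializing the rational functions $\frac{q^{\b_{C'}}}{1-q^{\b_{C'}}}$ at $q=1$ interacts correctly with the passage from $\QHTpol$ to $R_\bT(\tX)$ to $R'_\bT(\tX)$. One should be attentive to the fact that $u_S g_C$ lies in $\QHTpol$ (not just its localization), so that the specialization $q=1$ is defined without inverting anything, and that the relation $u_S g_C = 0$ holds before specialization; only then does dividing by $\hbar$ in $R'_\bT(\tX)$ make sense. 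Everything else — the independence hypotheses guaranteeing the monomials and the relevant quantum products are ordinary products, and the $L_{C'}(u_S u_{\Cbar})$ lying in $\sum\Im(L_i)$ — is immediate from Lemma \ref{small} and the definitions.
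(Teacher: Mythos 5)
Your argument does not close. The decisive step at the end is: ``each summand lies in $\sum\Im(L_i)=V$, \dots\ since $V\subseteq U$ is exactly the statement we are building toward \dots\ we get $\psi_\bT(u_S\fch)=0$.'' That is circular. The inclusion $V\subseteq U$, i.e.\ $\sum\Im(L_i)\subset\Ker(\psi_\bT)$, is \emph{not} established before this lemma and does not ``follow from Lemma \ref{small}''; in the paper it is only obtained at the very end, as a consequence of the chain $W\subset U$, $W=V$, and the Hilbert series comparison. What your computation actually sketches (expressing $u_S\fch$ as a combination of $L_{C'}(u_Su_{\Cbar})$) is the content of Lemma \ref{WinV}, namely $W\subset V$ --- and even there you would still need the pole-location argument to isolate the single surviving circuit $D=C$. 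Reducing $W\subset U$ to $W\subset V$ plus $V\subset U$ inverts the logical order of the section.

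The intended proof is much more direct and never mentions the operators $L_{C'}$. Expand $u_S\fch$ as a polynomial: the top term $u_C$ cancels between the two products in $\fch$, so every surviving term is $\pm\hbar^k$ times a square-free monomial $u_{S'}$ whose support $S'$ is contained in $S\cup(C\smallsetminus\{j\})$ for some $j\in C$, hence independent (since $S\cup\Cbar$ is independent and all the sets $C\smallsetminus\{j\}$ have the same span). By Lemma \ref{small} the quantum product of the variables in such a monomial equals the classical product, so $\psi_\bT$ sends the cohomology class $u_{S'}$ to the class of the same monomial in $R'_\bT(\tX)$. Therefore $\psi_\bT(u_S\fch)$ is represented by the polynomial $u_S\fch$ itself, which is zero in $R'_\bT(\tX)$ because $\fch\in J_1'$ by Corollary \ref{RT}. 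Your instinct to lift the relation $g_C\in J$ to $\QHTpol$ and push it through the divisor formula imports machinery that is only needed for the companion inclusion $W\subset V$.
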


\begin{proof}
Each term of $u_S \fch$ is equal to plus or minus a power of $\hbar$ times a square-free monomial of independent
support.  By Lemma \ref{small}, such a monomial is taken to itself by $\psi_\bT$.  This means that
$\psi_\bT$ takes $u_S \fch$ to itself, and $u_S \fch$ represents the zero element of $R'_\bT(\tX)$ by Corollary \ref{RT}.
\end{proof}

\begin{lemma}\label{WinV}
$W\subset V$.
\end{lemma}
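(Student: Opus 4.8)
\noindent The plan is to fix a circuit $C$ with maximal element $\imin$, write $\Cbar=C\setminus\{\imin\}$, and fix $S\subset[n]$ with $S\cap C=\emptyset$ and $S\cup\Cbar$ independent; since each $L_i$ is $H^*_\bT(*;\C)$-linear, $V$ is an $A_\C[\hbar]$-submodule, so it suffices to show that $u_S\fch\in V$. I would first isolate the combinatorial input: \emph{$S\cup(C\setminus\{c\})$ is independent for every $c\in C$}. Indeed, if some $S\cup(C\setminus\{c\})$ contained a circuit $C'$, then, as proper subsets of circuits are independent and $S\cup\Cbar$ is independent, we must have $\imin\in C'$ and $c\notin C'$; eliminating $\imin$ from the pair $C,C'$ then produces a circuit contained in $(C\cup C')\setminus\{\imin\}\subseteq S\cup\Cbar$, a contradiction.

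Next I would work with the quantum product $\ast$ on $H^*_\bT(\tX;\C)\otimes_\C\Lambda_{\operatorname{loc}}$, into which $\QHTpol$ embeds. Consider
\[
E:=\Big(\prod^{\ast}_{i\in C^+}u_i\Big)\ast\Big(\prod^{\ast}_{j\in C^-}(u_j-\hbar)\Big)\ast\Big(\prod^{\ast}_{k\in S}u_k\Big),
\]
together with the analogous element $E'$ in which the roles of $C^+$ and $C^-$ are exchanged. By the defining relation of $J$ in Theorem \ref{QHTpol}, $E=q^{\b_C}E'$. To evaluate $E$, I would quantum-multiply all the factors indexed by $\Cbar\cup S$ first: this set is independent, so by Lemma \ref{small} (extended $\hbar$-linearly) their quantum product equals the ordinary product $\Phi\in H^*_\bT(\tX;\C)$, and $E=v\ast\Phi$, where $v$ is $u_{\imin}$ or $u_{\imin}-\hbar$ according as $\imin\in C^+$ or $\imin\in C^-$. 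The divisor formula used in the proof of Lemma \ref{small} gives $v\ast\Phi=v\cdot\Phi+\hbar\sum_{C'\ni\imin}C'_{\imin}\tfrac{q^{\b_{C'}}}{1-q^{\b_{C'}}}L_{C'}(\Phi)$, and the codimension/support argument of that proof, combined with the combinatorial fact above, forces $L_{C'}(\Phi)=0$ for every circuit $C'\ni\imin$ with $C'\neq C$. Moreover $v\cdot\Phi=\prod_{i\in C^+}u_i\cdot\prod_{j\in C^-}(u_j-\hbar)\cdot u_S$ lies in $J_0$ and hence vanishes in $H^*_\bT(\tX;\C)$ by Theorem \ref{cohom}. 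Thus $E=\hbar\,C_{\imin}\tfrac{q^{\b_C}}{1-q^{\b_C}}L_C(\Phi)$, and the identical computation gives $E'=g_C^-u_S+\hbar\,C_{\imin}\tfrac{q^{\b_C}}{1-q^{\b_C}}L_C(\Phi')$, where $g_C^-u_S:=\prod_{i\in C^+}(u_i-\hbar)\cdot\prod_{j\in C^-}u_j\cdot u_S=-\hbar\,u_S\fch$ in $H^*_\bT(\tX;\C)$, using $g_C^+u_S=0$ and the definition of $\fch$ in Corollary \ref{RT}.

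Substituting into $E=q^{\b_C}E'$, cancelling $\hbar$ — legitimate since $H^*_\bT(\tX;\C)$ is $\hbar$-torsion-free by Remark \ref{getting rid of G} — and multiplying through by $1-q^{\b_C}$ yields the identity
\[
C_{\imin}\,q^{\b_C}L_C(\Phi)=-q^{\b_C}u_S\fch+q^{2\b_C}u_S\fch+C_{\imin}\,q^{2\b_C}L_C(\Phi')
\]
in $H^*_\bT(\tX;\C)\otimes_\C\Lambda$. Comparing coefficients of $q^{\b_C}$, which is legitimate because $\b_C\neq 0$ and this tensor product has the evident $\C$-basis, gives $u_S\fch=-C_{\imin}L_C(\Phi)$. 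Since $L_C$ is a linear combination of $L_1,\dots,L_r$, this shows $u_S\fch\in\sum_i\Im(L_i)=V$, and by $A_\C[\hbar]$-linearity $W\subseteq V$.

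I expect the main obstacle to be the claim that only the circuit $C$ contributes to the divisor-formula sum, i.e.\ that $L_{C'}(\Phi)=0$ for $C'\neq C$: this requires combining the codimension and support estimates from the proof of Lemma \ref{small} with the matroid lemma above. A secondary point demanding care is keeping track of which ring each identity lives in — $\QHTpol$, its localization $H^*_\bT(\tX;\C)\otimes_\C\Lambda_{\operatorname{loc}}$, and $H^*_\bT(\tX;\C)\otimes_\C\Lambda$ — so that clearing $1-q^{\b_C}$ and then extracting the $q^{\b_C}$-coefficient at the end is valid; here the $\hbar$-torsion-freeness of $H^*_\bT(\tX;\C)$ is precisely what makes the cancellation of $\hbar$ legitimate.
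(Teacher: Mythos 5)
Your overall strategy coincides with the paper's: use the quantum relation of Theorem \ref{QHTpol}, peel off the factor indexed by $\imin$ via the divisor formula of \cite[4.2]{McBS}, and identify $u_S\fch$ with $\pm L_C$ of an ordinary cohomology class. The gap is exactly at the step you flag as the main obstacle: the assertion that the codimension/support argument from the proof of Lemma \ref{small}, combined with your matroid lemma, forces $L_{C'}(\Phi)=0$ for every circuit $C'\ni\imin$ with $C'\neq C$. The support argument kills $L_{C'}$ applied to a class of degree $2m$ lying over $H_{(S\cup\Cbar)\setminus T}$ only when $\codim H_{C'\cup((S\cup\Cbar)\setminus T)}>m$, i.e.\ only when $C'$ is \emph{not} contained in the flat $\langle (S\cup\Cbar)\setminus T\rangle$. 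Your matroid lemma rules out circuits $C'\neq C$ contained in the \emph{set} $S\cup C$, but says nothing about circuits contained in the \emph{span} of $S\cup\Cbar$ that are not subsets of $S\cup C$, and such circuits exist. For instance, take $N=\Z$, $n=3$, $a_1=a_2=a_3=1$ (the $A_2$ surface), $C=\{1,3\}$, $\imin=3$, $S=\emptyset$: the circuit $C'=\{2,3\}$ contains $\imin$, is not contained in $S\cup C$, yet lies in $\langle\{1\}\rangle$, so the codimension inequality degenerates to $1>1$ and the support estimate yields nothing.

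The paper closes precisely this gap with a different device, the pole-location argument in $\QHrat$: after rearranging, one side of the identity is $\frac{q^{\beta_C}}{q^{\beta_C}-1}\hbar\, u_S\fch$, whose only poles are along $q^{\beta_C}=1$, while on the other side the unique summand with a pole along $q^{\beta_D}=1$ is the $D$-th one; hence $L_D(u_Sg_C)=0$ for all $D\neq C$. So the vanishing you need is true, but it is a consequence of the rational structure of quantum multiplication rather than of a codimension count, and some such additional idea is genuinely required. If you replace your appeal to the support estimate by this pole argument, the remainder of your computation --- extracting the coefficient of $q^{\beta_C}$ in $H^*_\bT(\tX;\C)\otimes_\C\Lambda$ and cancelling $\hbar$ using freeness over $\C[\hbar]$ --- is correct and is essentially equivalent to the paper's step of dividing by $\frac{q^{\beta_C}}{1-q^{\beta_C}}$. (Your auxiliary matroid lemma is correct but is not needed in the paper's route.)
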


\begin{proof}
Fix a circuit $C$, and assume that $\imin\in C^+$.  Let
$$g_C \;\;:=  \prod_{i \in C^+ \setminus \{\imin\}} \!\!u_i \,\,\cdot\prod_{j \in C^-} (u_j - \hbar)\;\;\in\;\; 
H^*_\bT(\tX; \C)\;\;\subset\;\; \QHTpol.$$
(Note that, by Lemma \ref{small}, this product has no quantum correction.)
By Theorem \ref{QHTpol}, we have 
\[ u_{\imin}\; g_C \;\;=\;\; \prod_{i \in C^+} u_i \,\,\cdot\prod_{j \in C^-} (u_j - \hbar) \;\;=\;\;q^{\beta_C} \prod_{i \in C^+} (u_i - \hbar) \,\,\cdot \prod_{j \in C^-} u_j \;\;\in\;\; \QHTpol. \]
By definition of $\fch$, we have
\begin{eqnarray*}q^{\b_C}\hbar \fch &=& q^{\b_C}\prod_{i\in C^+} u_i\cdot \prod_{j\in C^-} (u_j-\hbar) \;\;-\;\;
q^{\b_C}\prod_{i\in C^+} (u_i - \hbar)\cdot \prod_{j\in C^-} u_j\\
&=& q^{\b_C}u_{\imin}\; g_C - u_{\imin}\, g_C\\
&=& \big(q^{\b_C}-1\big)u_{\imin}\, g_C \;\;\in\;\; \QHTpol.
\end{eqnarray*}
Fix a set $S$ such that $S\cap C=\emptyset$ and $S\cup\Cbar$ is independent.  
Multiplying both sides of the above equation by $u_S$, we obtain
\begin{equation*}\label{middle}
q^{\beta_C} \hbar\, u_S \fch \;\; =\;\; \big(q^{\beta_C}-1\big) u_{\imin} u_S g_C.
\end{equation*}
Since the classical product of $u_{\imin}$ with $g_C$ vanishes and $u_S g_C\in H^*_\bT(\tX; \C)$ by Lemma \ref{small}, 
we have \cite[4.2]{McBS}
\begin{equation*} \label{circuitsum} u_{\imin} \cdot u_S g_C\;\; =\;\;  \hbar \sum_{D} D_{\imin} \frac{q^{\beta_D}}{1-q^{\beta_D}} L_D(u_S g_C), \end{equation*}
where $D$ ranges over all circuits. 

Let $\C(\L)$ be the field of fractions of $\L$, and let $\QHrat$ be the ring generated by $H^*_\bG(\tX; \C)\otimes_\C \C(\L)$ under the quantum product. It follows easily from \cite[4.2]{McBS} that in fact $\QHrat = H^*_\bG(\tX; \C)\otimes_\C \C(\L)$ as a vector space,
and that $\QHpol \subset \QHrat$. We may think of elements of $\QHrat$ as meromorphic sections of the vector bundle with fiber $H^*_\bG(\tX; \C)$ over $\operatorname{Spec} \L$. In particular they have well-defined loci of poles.

We can now combine the two above equations 
to obtain \[ \frac{q^{\beta_C}}{q^{\beta_C}-1} \hbar\, u_S \fch \;\;=\;\; 
\hbar \sum_{D} D_{\imin} \frac{q^{\beta_D}}{1-q^{\beta_D}} L_D(u_S g_C) \;\;\in\;\;\QHrat.\]
Since the left-hand side has poles only at $q^{\beta_C}=1$, so does the right-hand side. 
We conclude that all summands such that $D\neq C$ vanish, and we are left with
\begin{equation*}\label{the third one}\frac{q^{\beta_C}}{q^{\beta_C}-1} \hbar\, u_S \fch \;\;=\;\; \hbar\frac{q^{\beta_C}}{1-q^{\beta_C}} L_C(u_S g_C).\end{equation*}
Dividing by $\frac{q^{\beta_C}}{1-q^{\beta_C}}$, we have $$\hbar\, u_S \fch \;\;=\;\; -\hbar L_C(u_S g_C).$$
{\em A priori}, this equation lives in $\QHrat$.  However, it is clear that both sides live
in the subspace $\QHpol\subset\QHrat$.  Furthermore, since $u_S f_{C}$ is a sum of powers of $\hbar$
times independent square-free monomials, Lemma \ref{small} tells us that $u_S \fch$ lies in $H^*_\bT(\tX; \C)
\subset\QHpol$.  
Since $H^*_\bT(\tX; \C)$ is a free module over $\C[\hbar]$,
we may divide by $\hbar$ to obtain $$u_S \fch \;\;=\;\; -L_C(u_S g_C)\;\;\in\;\; H^*_\bT(\tX; \C).$$
Thus we see that $u_S \fch$ is in the image of $L_C$, and is therefore in the span of the images of $L_1,\ldots,L_r$.
A similar argument can be applied if $\imin\in C^-$.
\end{proof}

For any $\mathbb{N}$-graded vector space $Y = \bigoplus Y^k$ with finite-dimensional graded pieces,
let $$\Hilb(Y; t) := \sum_{k=0}^\infty \dim Y^k t^k\in\mathbb{N}[[t]].$$

\begin{lemma}\label{HilbUV}
$\Hilb(U; t) = \Hilb(V; t)$.
\end{lemma}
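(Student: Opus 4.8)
The plan is to express both Hilbert series through the cohomology of $\tX$ and thereby reduce the claim to a single comparison. Since $\psi_\bT$ is a surjection of $\mathbb{N}$-graded vector spaces with finite-dimensional graded pieces (Corollary~\ref{surj}) and kernel $U$, we have
\[
\Hilb(U;t)=\Hilb\big(H^*_\bT(\tX;\C);t\big)-\Hilb\big(R'_\bT(\tX);t\big).
\]
On the other hand, Corollary~\ref{BBD-cor}, applied with $\bG=\bT$, gives a graded direct sum decomposition $H^*_\bT(\tX;\C)=\big(\bigcap_{i=1}^r\Ker(L_i)\big)\oplus V$ in which the first summand is isomorphic to $\IHT$, so
\[
\Hilb(V;t)=\Hilb\big(H^*_\bT(\tX;\C);t\big)-\Hilb\big(\IHT;t\big).
\]
Hence the lemma is equivalent to the assertion that $\Hilb\big(R'_\bT(\tX);t\big)=\Hilb\big(\IHT;t\big)$.

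To prove this I would pass to the quotient by $\hbar$. The ring $R'_\bT(\tX)$ is free over $\C[\hbar]$ by the appendix (Theorem~\ref{thm-apx}), and $\IHT$ is free over $\C[\hbar]$ by equivariant formality, since $\IHz$ vanishes in odd degree (cf.\ Remark~\ref{getting rid of G}); moreover the quotient by $\hbar$ recovers $R'_\bT(\tX)/\langle\hbar\rangle$, respectively $I\!H^*_T(X;\C)$. As $\deg\hbar=2$, it suffices to show $\Hilb\big(R'_\bT(\tX)/\langle\hbar\rangle;t\big)=\Hilb\big(I\!H^*_T(X;\C);t\big)$. A short computation from Corollary~\ref{RT} shows that setting $\hbar=0$ in the generator $\fch$ of $J_1'$ yields the element $\sum_{k\in C}C_k\,u_{C\smallsetminus k}$, which is exactly the defining relation of the equivariant Orlik--Terao algebra of our (unimodular) arrangement, the coefficients $C_k=\pm1$ recording the circuit relation $\sum_{i\in C^+}a_i=\sum_{j\in C^-}a_j$; thus $R'_\bT(\tX)/\langle\hbar\rangle$ is this Orlik--Terao algebra, as is also confirmed in the appendix (Theorem~\ref{monomial span} and Remark~\ref{same OT}). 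Since Braden and the second author identified $I\!H^*_T(X;\C)$ with the same Orlik--Terao algebra \cite{TP08}, the two Hilbert series agree, and the proof is complete.

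The two dimension counts in the first paragraph are purely formal, and the reduction modulo $\hbar$ is routine once the freeness statements are granted. The genuine content, imported from the appendix, is that $R'_\bT(\tX)$ is free over $\C[\hbar]$ and that modulo $\hbar$ the circuit relations cut $\C[u_1,\ldots,u_n]$ down to exactly the Orlik--Terao algebra — equivalently, that the independent monomials $u_S$ span $R'_\bT(\tX)/\langle\hbar\rangle$ with no further collapsing. This is where I expect the main difficulty to lie, and it is also where unimodularity is essential, since it is what forces the coefficients in the circuit relations to be $\pm1$ and hence matches the presentation of \cite{TP08}.
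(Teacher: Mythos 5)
Your proof is correct and follows the same overall strategy as the paper: both arguments reduce the claim to $\Hilb(R'_\bT(\tX);t)=\Hilb(\IHT;t)$ via Corollary \ref{surj} and Corollary \ref{BBD-cor}, and both then rely on Theorem \ref{thm-apx} and Remark \ref{same OT} to pass from $R'_\bT(\tX)$ to the Orlik--Terao algebra $\OT$. The only divergence is the final comparison: you quote the isomorphism $I\! H^*_T(X;\C)\cong\OT$ from \cite[4.5]{TP08} directly, whereas the paper avoids invoking that isomorphism here and instead matches the two Hilbert series through a common combinatorial invariant, the $h$-polynomial of the broken circuit complex, using Proposition \ref{ranks} for $\OT$ and \cite[4.3]{PW07} for $\IHz$. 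Your route is legitimate and not circular (the isomorphism of \cite{TP08} is prior work, and Proposition \ref{same iso} later compares isomorphisms rather than Hilbert series); the paper's version merely has the cosmetic advantage of keeping the appeal to \cite{TP08} confined to Section \ref{sec:BP}.
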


\begin{proof}
Since $\psi_\bT$ is surjective (Corollary \ref{surj}), we have $R'_\bT(\tX) \cong H^*_\bT(\tX; \C)/U$.
By Corollary \ref{BBD-cor}, we also have $\IHT \cong H^*_\bT(\tX; \C)/V$.  Thus the
statement that $\Hilb(U; t) = \Hilb(V; t)$ is equivalent to the statement that $\Hilb(R'_\bT(\tX); t) = \Hilb(\IHT; t)$.

By Remark \ref{same OT} and Theorem \ref{thm-apx}, we have $$\Hilb(R'_\bT(\tX); t) = \Hilb(\OTh; t)
= (1-t)^{-1}\Hilb(\OT; t).$$
By Proposition \ref{ranks}, $\Hilb(\OT; t)$ is equal to $(1-t)^{-\rk N}$ times the $h$-polynomial of the broken circuit complex
of the matroid represented by the vectors $a_1,\ldots,a_n$.
On the other hand, we have $$\Hilb(\IHT; t) = (1-t)^{-\rk N - 1}\Hilb(\IHz; t),$$
and $\Hilb(\IHz; t)$ is itself equal to the $h$-polynomial of the broken circuit complex \cite[4.3]{PW07}.
Thus $\Hilb(U; t) = \Hilb(V; t)$.
\end{proof}

Let $V_0 = V \otimes_{\C[\hbar]}\C$, and let $W_0$ be the image of $W\subset V$ in $V_0$.
More concretely, $V_0$ is the complement of $I\! H^*_T(X; \C)$ in 
$$H^*_T(\tX; \C) \cong \C[u_1,\ldots,u_n]/\langle u_C\mid \text{$C$ a circuit}\rangle,$$ and $W_0$
is the $A_\C$-submodule of $H^*_T(\tX; \C)$ spanned by $\{u_Sf_{C,0}\}$, where $f_{C,0}$ is obtained
from $\fch$ by setting $\hbar$ equal to zero.


\begin{lemma}\label{what's in W0}
Let $C$ be a circuit and let $S$ be a set disjoint from $C$.
For any collections of non-negative integers $\underline{d} = (d_i\mid i\in S)$ and $\underline{e} = (e_j\mid j\in\Cbar)$,
we have
$$f(S,C,\underline{d},\underline{e}) \;\; :=\;\; 
u_S f_C\cdot\prod_{i\in S}u_i^{d_i}\cdot\prod_{j\in\Cbar}(C_ju_j - C_{\imin}u_{\imin})^{e_j}\;\;\in\;\; W_0.$$
\end{lemma}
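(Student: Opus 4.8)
The plan is to prove the claim by induction on $|S| + \sum_i d_i + \sum_j e_j$, the total number of ``extra'' factors multiplied onto $u_S f_C$. The base case is $\underline d = \underline e = 0$ and $S = \emptyset$, where the statement $f(\emptyset, C, 0, 0) = f_{C,0} \in W_0$ is immediate from the definition of $W_0$ (since $\Cbar$ is automatically independent, being a proper subset of a circuit). For the inductive step there are three types of factor one can strip off, and in each case I would rewrite the product as a combination of terms of the shape $u_{S'} f_{C,0} \cdot (\text{smaller product})$ with either $S \subsetneq S'$ disjoint from $C$ and $S' \cup \Cbar$ independent, or with a strictly smaller total count, so that the inductive hypothesis applies to each.

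The key algebraic moves are the following. First, multiplying by an extra $u_i$ for $i \in S$: if $S \cup \{i\}$ is still such that $(S \cup \{i\}) \cap C = \emptyset$ and $(S \cup \{i\}) \cup \Cbar$ is independent, we are simply in the case recorded by $f(S \cup \{i\}, C, \underline d', \underline e)$ with $d_i$ decreased, so induction applies directly; if instead $(S \cup \{i\}) \cup \Cbar$ contains a circuit $D$, then since $S \cup \Cbar$ was independent this circuit $D$ must use $i$, and in $H^*_T(\tX;\C)$ we have the relation $u_D = 0$, which lets us solve for $u_i$ times the relevant monomial as a combination of monomials supported on $D \smallsetminus \{i\} \subset S \cup \Cbar$ — allowing us to re-express $u_S f_{C,0} \cdot u_i \cdot (\cdots)$ without that factor of $u_i$, reducing the count. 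Second, multiplying by an extra factor $(C_j u_j - C_{\imin} u_{\imin})$ for $j \in \Cbar$: here I would use that $f_{C,0}$ itself is, up to sign and a factor $u_{\imin}^{-1}$ being hidden, built from $\prod_{C^+} u_i \prod_{C^-} u_j - \prod_{C^+} u_i\prod_{C^-} u_j$ evaluated at $\hbar = 0$ — the point is that $(C_j u_j - C_{\imin} u_{\imin})$ is precisely the kind of linear form that appears when one expands the telescoping difference defining $f_{C,0}$, so one can absorb it and shift the ``distinguished'' deleted element of the circuit from $\imin$ to $j$, rewriting $(C_j u_j - C_{\imin}u_{\imin}) f_{C,0}$ as $\pm(u_j \cdot (\text{stuff}) - u_{\imin}\cdot(\text{stuff}))$ where each piece is $u_{S'} f_{C,0}$ for an appropriate independent $S'$.

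The main obstacle I expect is bookkeeping the independence condition ``$S' \cup \Cbar$ is independent'' through the substitutions: when one uses a circuit relation $u_D = 0$ to trade a factor of $u_i$ for a sum over $D \smallsetminus \{i\}$, one must check that each resulting support set still meets the disjointness-and-independence hypotheses, and in the boundary cases where it does not, one must recurse once more using a second circuit relation. Making this terminate cleanly is where the induction ordering matters; I would set up the induction measure carefully (probably lexicographic in $(|S|, \sum d_i + \sum e_j)$, with $|S|$ bounded above by $\rk N - |\Cbar|$ so that it cannot increase forever) so that every rewrite strictly decreases it. Once the termination measure is pinned down, each individual rewrite is a short computation in $\C[u_1,\ldots,u_n]/\langle u_D : D \text{ a circuit}\rangle$ using the defining relations and the explicit form of $f_{C,0}$, and the result follows.
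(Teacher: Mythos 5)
Your proposal has a genuine gap, and it is located exactly at the crux of the lemma. The set $W_0$ is not an ideal of $H^*_T(\tX;\C)$: it is the $A_\C$-submodule spanned by the elements $u_Sf_{C,0}$ with $S\cup\Cbar$ independent, where $A_\C=\Sym N^\vee_\C$ acts through $\pi^\vee$, i.e.\ by multiplication by the linear forms $\sum_k a_k(v)u_k$ only. So the inductive step is not ``strip off a factor and rewrite''; it is to show that multiplying an element of $W_0$ by an \emph{individual variable} $u_i$ (or by $C_ju_j-C_{\imin}u_{\imin}$) lands back in $W_0$, which is not automatic. The paper's mechanism is to choose $x\in N^\vee$ with $\pi^\vee(x)=u_i+\sum_{k\notin S\cup C}\gamma_ku_k$ (possible because $S\cup\Cbar$ is independent), note that $\pi^\vee(x)\cdot f(S,C,\underline d,\underline e)\in W_0$ by the module structure, and absorb the correction terms $u_k\cdot f(S,C,\underline d,\underline e)=f(S\cup\{k\},C,\underline d,\underline e)$ via a \emph{downward} induction on $|S|$ (with the base case $|S|>\rk N-|\Cbar|$, where everything vanishes). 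Your proposal never invokes the $A_\C$-module structure, so it has no way to generate new elements of $W_0$ beyond the defining spanning set.

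The substitute mechanism you propose does not work. You write that when $(S\cup\{i\})\cup\Cbar$ contains a circuit $D$, the relation $u_D=0$ in $H^*_T(\tX;\C)$ ``lets us solve for $u_i$ times the relevant monomial as a combination of monomials supported on $D\smallsetminus\{i\}$.'' It does not: the Stanley--Reisner relation $u_D=0$ only kills monomials divisible by $u_D$; the linear relation $\sum_{l\in D}\eta_lu_{D\smallsetminus\{l\}}=0$ that would let you trade $u_i u_{D\smallsetminus\{i\}}$ for other broken-circuit monomials is an Orlik--Terao relation, valid in $\OT$ but not in $H^*_T(\tX;\C)\cong\C[u_1,\ldots,u_n]/\langle u_C\rangle$. (And even granting such a rewrite, it would only show membership in an ideal, not in the $A_\C$-span $W_0$.) The second move, ``shifting the distinguished deleted element from $\imin$ to $j$,'' is similarly unsupported: $(C_ju_j-C_{\imin}u_{\imin})f_{C,0}$ produces non-square-free monomials and does not telescope into elements of the form $u_{S'}f_{C,0}$. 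You would need to replace both moves by the linear-form trick above for the argument to close.
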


\begin{proof}
We proceed via a double induction.
First, we fix $C$ and induct downward on the size of $S$.
If $|S| > \rk N - |\Cbar|$, then every term of $u_S f_C$ contains a monomial supported on
a dependent set, so $u_S f_C = 0$.
Thus we will fix $S$ and assume that the lemma holds for all sets $S'\supsetneq S$ disjoint from $C$.
By the same reasoning, we may assume that $S\cup \Cbar$ is independent.

Second, we induct upward on the exponents.  The base case is where $d_i = 0 = e_j$ for all $i$ and $j$,
in which case $f(S,C,\underline{d},\underline{e})= u_S f_C \in W_0$ by definition of $W_0$.  Thus we may
fix $\underline{d}$ and $\underline{e}$ such that $f(S,C,\underline{d},\underline{e})\in W_0$ and prove
that for all $i\in S$ and $j\in \Cbar$, we have
$u_i f(S,C,\underline{d},\underline{e})\in W_0$ and $(C_ju_j - C_{\imin}u_{\imin}) f(S,C,\underline{d},\underline{e})\in W_0$.

Let $i\in S$ be given.  Since $S\cup\Cbar$ is independent, there exists $x\in N^\vee$ such that $\pi^\vee(x) = \sum \gamma_k u_k$
with $\gamma_i = 1$ and $\gamma_k = 0$ for all $k\in S\cup C\smallsetminus\{i\}$.
Then 
\begin{eqnarray*}\pi^\vee(x)\cdot f(S,C,\underline{d},\underline{e}) &=& \sum_{k=1}^n \gamma_k u_k\cdot f(S,C,\underline{d},\underline{e})\\
&=& u_i \cdot f(S,C,\underline{d},\underline{e}) + \sum_{k\notin S\cup C}\gamma_k u_k\cdot f(S,C,\underline{d},\underline{e})\\
&=& u_i \cdot f(S,C,\underline{d},\underline{e}) + \sum_{k\notin S\cup C}\gamma_k \cdot f(S\cup\{k\},C,\underline{d},\underline{e}).
\end{eqnarray*}
Our first inductive hypothesis tells us that $ f(S\cup\{k\},C,\underline{d},\underline{e})\in W_0$, and $W_0$ is by definition closed under
multiplication by elements of $A$, so we also have $\pi^\vee(x)\cdot f(S,C,\underline{d},\underline{e})\in W_0$.
This implies that $u_i \cdot f(S,C,\underline{d},\underline{e})\in W_0$.

Let $j\in \Cbar$ be given.  Since $S\cup\Cbar$ is independent, there exists $x\in N^\vee$ such that $\pi^\vee(x) = \sum \gamma_k u_k$
with $\gamma_j = C_j$, $\gamma_{\imin} = -C_{\imin}$, and $\gamma_k = 0$ for all $k\in S\cup C\smallsetminus\{j,\imin\}$.
Then 
\begin{eqnarray*}\pi^\vee(x)\cdot f(S,C,\underline{d},\underline{e}) &=& \sum_{k=1}^n \gamma_k u_k\cdot f(S,C,\underline{d},\underline{e})\\
&=& (C_ju_i - C_{\imin}u_{\imin})\cdot f(S,C,\underline{d},\underline{e}) + \sum_{k\notin S\cup C}\gamma_k u_k\cdot f(S,C,\underline{d},\underline{e})\\
&=& (C_ju_i - C_{\imin}u_{\imin}) \cdot f(S,C,\underline{d},\underline{e}) + \sum_{k\notin S\cup C}\gamma_k \cdot f(S\cup\{k\},C,\underline{d},\underline{e}).
\end{eqnarray*}
By the same reasoning as above, this implies that 
$(C_ju_i - C_{\imin}u_{\imin}) \cdot f(S,C,\underline{d},\underline{e})\in W_0$.
\end{proof}

\begin{lemma}\label{HilbVW}
$V = W$.
\end{lemma}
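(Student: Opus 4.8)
The plan is to deduce $W = V$ from a Hilbert-series computation after reducing modulo $\hbar$. By Lemma~\ref{WinV} we already have $W \subseteq V$, so it suffices to prove $\Hilb(W;t) = \Hilb(V;t)$. Both $W$ and $V$ are graded $\C[\hbar]$-submodules of $H^*_\bT(\tX;\C)$, which is free over $\C[\hbar]$ (Remark~\ref{getting rid of G}), so $\hbar$ acts injectively on $W$ and on $V$; hence $\Hilb(W;t) = \Hilb(V;t)$ is equivalent to $\Hilb(W/\hbar W;t) = \Hilb(V/\hbar V;t)$. Moreover $\IHT$ is free over $\C[\hbar]$ by the argument of Remark~\ref{getting rid of G}, so $\hbar$ is a nonzerodivisor on $H^*_\bT(\tX;\C)/V \cong \IHT$, and tensoring $0\to V\to H^*_\bT(\tX;\C)\to \IHT\to 0$ with $\C[\hbar]/(\hbar)$ keeps it exact; this identifies $V/\hbar V$ with the subspace $V_0$ of $H^*_T(\tX;\C)$ from the statement. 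Since $W/\hbar W$ surjects onto $W_0\subseteq V_0 = V/\hbar V$ and $W\subseteq V$, chasing these comparisons shows that the whole statement reduces to
$$\Hilb(W_0;t)=\Hilb(V_0;t),$$
and as $W_0 \subseteq V_0$ it is enough to produce, degree by degree, enough linearly independent elements of $W_0$.

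For this I would use Lemma~\ref{what's in W0}. Write $\bar L_i$ for the endomorphism of $H^*_T(\tX;\C)\cong\C[u_1,\ldots,u_n]/\langle u_C\mid C\text{ a circuit}\rangle$ induced by $L_i$; then $V_0 = \sum_i\Im(\bar L_i)$, and since $H^*_T(\tX;\C)$ is spanned over $A_\C$ by the monomials $u_B$ with $B$ independent (Corollary~\ref{small monomials}), $V_0$ is spanned over $A_\C$ by the classes $\bar L_i(u_B)$. Reducing modulo $\hbar$ the identity $u_S\fch = -L_C(u_S g_C)$ from the proof of Lemma~\ref{WinV} shows that when $B = S\cup\Cbar$ with $S\cap C = \emptyset$ and $B$ independent, the relevant circuit operator sends $u_B$ to $\pm u_S f_{C,0}\in W_0$. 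The remaining task is to treat a general independent $B$: here the divisor formula \cite[4.2]{McBS} lets one rewrite $\bar L_i(u_B)$ as an $A_\C$-combination of such special values, multiplied by the $u_i$ with $i\in S$ and by the linear forms $C_ju_j - C_{\imin}u_{\imin}$ with $j\in\Cbar$ --- which are exactly the multipliers for which Lemma~\ref{what's in W0} guarantees that the product $f(S,C,\underline{d},\underline{e})$ still lies in $W_0$. This would give $V_0\subseteq W_0$, hence $W_0 = V_0$. Alternatively one can argue by dimension, using that $\Hilb(V_0;t) = \Hilb(H^*_T(\tX;\C);t) - \Hilb(I\! H^*_T(X;\C);t)$ is governed by the $h$-polynomial of the broken circuit complex exactly as in the proof of Lemma~\ref{HilbUV}, and checking that the elements of Lemma~\ref{what's in W0} span a subspace of $W_0$ of that dimension in every degree.

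The hard part will be this middle step: verifying that the explicit elements furnished by Lemma~\ref{what's in W0} genuinely exhaust $V_0$ rather than a proper submodule. The inclusion $W_0\subseteq V_0$ is immediate from Lemma~\ref{WinV}, but the reverse requires controlling the image of each convolution operator $\bar L_C$ finely enough to recover $\bar L_C(u_B)$ for \emph{every} independent $B$, including those that contain $\imin$ or omit part of the broken circuit $\Cbar$, where the vanishing-by-support argument behind Lemma~\ref{small} does not apply directly. In practice I expect the cleanest way around this obstacle to be the dimension count: the target Hilbert series $\Hilb(V_0;t)$ is known from \cite{PW07} and the cohomology ring of $\tX$, and one uses Lemma~\ref{what's in W0} to exhibit that many independent elements of $W_0$ in each degree, whence $W_0 = V_0$ and therefore $W = V$.
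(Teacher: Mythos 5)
Your outer reduction is sound and matches the paper's: both of you reduce $V=W$ to $W_0=V_0$ (the paper lifts $W_0=V_0$ to $W=V$ by a minimal-degree induction rather than by your freeness-over-$\C[\hbar]$ Hilbert-series comparison, but the two arguments are interchangeable), and both of you aim to settle $W_0=V_0$ by comparing against the $h$-polynomial of the broken circuit complex via \cite[4.3]{PW07}. The gap is exactly the step you flag as hard. Your primary route --- recovering $\bar L_C(u_B)$ for \emph{every} independent $B$ from the identity $u_S\fch=-L_C(u_Sg_C)$ --- is not what the paper does, and as you observe it breaks down for independent sets containing $\imin$ or only part of $\Cbar$; the paper never computes those images. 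Your fallback (``exhibit that many independent elements of $W_0$ in each degree'') is the right strategy but is asserted without the mechanism that makes it work: the elements $f(S,C,\underline{d},\underline{e})$ are complicated sums, and neither their linear independence nor the dimension of their span is evident from Lemma \ref{what's in W0} alone.

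The missing idea is a Gr\"obner/initial-term degeneration. With respect to the degree-lexicographic order with $u_1>u_2>\cdots>u_n$, the initial term of $f(S,C,\underline{d},\underline{e})$ is $\pm\, u_{S\cup\Cbar}\prod_{i\in S}u_i^{d_i}\prod_{j\in\Cbar}u_j^{e_j}$ (the broken circuit monomial $u_{\Cbar}$ is the leading term of $f_{C,0}$, and $C_ju_j$ leads $C_ju_j-C_{\imin}u_{\imin}$ since $j<\imin$). As $(S,C,\underline{d},\underline{e})$ vary, these are precisely the monomials of independent support divisible by some $u_{\Cbar}$, i.e.\ they span the kernel of the projection $H^*_T(\tX;\C)\to\SRbc$. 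Hence $\init(W_0)$ contains that kernel, so
$$\Hilb\big(H^*_T(\tX;\C)/W_0;t\big)=\Hilb\big(H^*_T(\tX;\C)/\init(W_0);t\big)\;\leq\;\Hilb(\SRbc;t)=\Hilb\big(I\! H^*_T(X;\C);t\big)=\Hilb\big(H^*_T(\tX;\C)/V_0;t\big),$$
which together with $W_0\subset V_0$ forces $W_0=V_0$. Without this (or some equivalent device for counting the span of the $f(S,C,\underline{d},\underline{e})$), your argument does not close.
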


\begin{proof}
We will start by proving that $\Hilb(W_0; t) = \Hilb(V_0; t)$.
Consider the degree-lexicographic monomial order on $H^*_T(\tX; \C)$ with $u_1 > u_2 > \ldots > u_n$.
Given $C$, $S$, $\underline{d}$, and $\underline{e}$ as in Lemma \ref{what's in W0},
The initial term of $f(S,C,\underline{d},\underline{e})$ with respect to this order is 
$\pm u_{S\cup\Cbar}\prod_S u_i^{d_i}\prod_{\Cbar} u_j^{e_j}$.
These monomials span the kernel of the projection
$$H^*_T(\tX; \C) \cong \C[u_1,\ldots,u_n]/\langle u_C\mid \text{$C$ a circuit}\rangle
\to \C[u_1,\ldots,u_n]/\langle u_{\Cbar}\mid \text{$C$ a circuit}\rangle
=: \SRbc,\footnote{This is the Stanley-Reisner ring of the broken circuit complex; see Section \ref{sec:spanning}.}$$
thus Lemma \ref{what's in W0} tells us that $\init(W_0)$ contains this kernel.
We therefore have
\begin{eqnarray*}
\Hilb(H^*_T(\tX; \C)/W_0; t) &=& \Hilb(H^*_T(\tX; \C)/\init(W_0); t)\\
&\leq& \Hilb(\SRbc; t)\\ 
&=& \Hilb(I\! H^*_T(X; \C))\;\;\;\;\text{by \cite[4.3]{PW07}}\\
&=& \Hilb(H^*_T(\tX; \C)/V_0; t).
\end{eqnarray*}
Since $W_0\subset V_0$, this implies that $W_0 = V_0$.

We would like to use this to conclude that $W = V$.  Suppose not, and let $v\in V$ be a homogeneous element of minimal degree
that is not contained in $W$.  Let $v_0$ be the image of $v$ in $V_0$.  Since $W_0 = V_0$, there exists a homogeneous
$w\in W$ such that $w_0 = v_0$.  This means that $v-w$ is in the kernel of the projection from $V$ to $V_0$, 
so there exists a homogeneous $v'\in V$
with $v-w = \hbar v'$.  By minimality of the degree of $v$, we have $v'\in W$, and therefore $v = \hbar v' + w\in W$, 
which is a contradiction.
\end{proof}

\begin{corollary}\label{all the same}
$U = V$.
\end{corollary}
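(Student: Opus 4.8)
The plan is to assemble the pieces already proved into the chain of inclusions $U \supseteq V = W$ and $U \subseteq V$. The only genuinely new work here is organizational: all the hard analytic and combinatorial content has been distributed across Lemmas \ref{WinU}, \ref{WinV}, \ref{HilbUV}, and \ref{HilbVW}.

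First I would recall that by definition $V = \Im(L_1) + \cdots + \Im(L_r)$ and $U = \Ker(\psi_\bT)$, and that the conjecture to be proved (part (2) of Conjecture \ref{main} for hypertoric varieties, with surjectivity already established in Corollary \ref{surj}) is exactly the assertion $U = V$. By Lemma \ref{WinU} we have $W \subseteq U$, and by Lemma \ref{HilbVW} we have $W = V$; combining these gives $V \subseteq U$. For the reverse inclusion $U \subseteq V$, I would invoke Lemma \ref{HilbUV}, which gives $\Hilb(U; t) = \Hilb(V; t)$. Since $V \subseteq U$ and both are $\N$-graded with finite-dimensional graded pieces of equal dimension in every degree, the inclusion must be an equality. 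Hence $U = V$.

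There is really no main obstacle left at this stage — the corollary is a one-line consequence of the preceding lemmas, and the proof amounts to writing $V = W \subseteq U$ together with the dimension count $\Hilb(U;t) = \Hilb(V;t)$ to force equality. If I were to flag the conceptually load-bearing step, it is Lemma \ref{HilbUV}, whose proof relies on the appendix results (Theorem \ref{thm-apx}, Proposition \ref{ranks}, Remark \ref{same OT}) identifying the Hilbert series of $R'_\bT(\tX)$ with that of $\IHT$ via the $h$-polynomial of the broken circuit complex; but that is assumed here. So the proof I would write is simply:

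\begin{proof}
By Lemma \ref{HilbVW} we have $V = W$, and by Lemma \ref{WinU} we have $W \subseteq U$; hence $V \subseteq U$. By Lemma \ref{HilbUV} we have $\Hilb(U; t) = \Hilb(V; t)$. Since $V$ and $U$ are $\N$-graded vector spaces with finite-dimensional graded pieces, $V \subseteq U$, and $\dim V^k = \dim U^k$ for all $k$, it follows that $V = U$.
\end{proof}
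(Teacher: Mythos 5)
Your proof is correct and matches the paper's own argument exactly: both deduce $V\subseteq U$ from Lemmas \ref{WinU} and \ref{HilbVW}, then use the equality of Hilbert series from Lemma \ref{HilbUV} to upgrade the inclusion of graded vector spaces with finite-dimensional graded pieces to an equality. No issues.
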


\begin{proof}
Lemmas \ref{WinU} and \ref{HilbVW} imply that $V\subset U$, and they have the same Hilbert series by Lemma \ref{HilbUV},
so they must be equal.
\end{proof}

\begin{theorem}\label{hypertoric main}
Conjecture \ref{main} holds for hypertoric varieties.
\end{theorem}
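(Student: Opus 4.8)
The plan is to verify the two parts of Conjecture \ref{main} separately, taking $G$ to be the Hamiltonian torus $T$ so that $\bG = \bT$; by Remark \ref{getting rid of G} the statement for $\psi_\bT$ then also yields the one for $\psi_\cs$. Essentially all of the work has already been isolated in the preceding lemmas and corollaries, so the last step is only to assemble them.

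For part (1), surjectivity of $\psi_\bT$, I would invoke Corollary \ref{surj} directly. The mechanism there is that $\psi_\bT$ is $A_\C[\hbar]$-linear and, by Lemma \ref{small}, the quantum product of the generators $\{u_i \mid i \in S\}$ for an independent set $S$ has no quantum correction, so $\psi_\bT$ carries the class $u_S$ in $H^*_\bT(\tX;\C)$ to the honest monomial $u_S$ in $R'_\bT(\tX)$; since these monomials span $R'_\bT(\tX)$ over $A_\C[\hbar]$ by Proposition \ref{small quantum monomials}, $\psi_\bT$ is onto.

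For part (2), that $\Ker(\psi_\bT) = \sum_{i=1}^r \Im(L_i)$, i.e.\ $U = V$ in the notation fixed in the previous subsection, I would quote Corollary \ref{all the same}. This is the substantive half, and its proof proceeds by sandwiching the explicit subspace $W$ spanned over $A_\C[\hbar]$ by the elements $u_S \fch$ with $C$ a circuit, $S \cap C = \emptyset$, and $S \cup \Cbar$ independent. One shows $W \subseteq U$ (Lemma \ref{WinU}: each generator is a sum of powers of $\hbar$ times independent square-free monomials, hence fixed by $\psi_\bT$ by Lemma \ref{small}, and zero in $R'_\bT(\tX)$ by Corollary \ref{RT}) and $W \subseteq V$ (Lemma \ref{WinV}), then upgrades the second inclusion to $W = V$ via the $\hbar = 0$ specialization argument of Lemma \ref{HilbVW}, and finally compares Hilbert series: $\Hilb(U;t) = \Hilb(V;t)$ by Lemma \ref{HilbUV}, forcing the inclusion $V \subseteq U$ to be an equality. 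Putting the two parts together gives Theorem \ref{hypertoric main}.

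I expect the genuine difficulty to be concentrated in Lemma \ref{WinV} and the Hilbert-series bookkeeping of Lemmas \ref{HilbUV} and \ref{HilbVW}, rather than in the assembly itself. For \ref{WinV} one rewrites $u_S \fch$ inside the larger ring $\QHrat$ using the divisor quantum-multiplication formula of \cite[4.2]{McBS}, and then uses that the left-hand side has poles only along $q^{\beta_C} = 1$ to annihilate all circuit contributions $D \neq C$, leaving $u_S \fch = -L_C(u_S g_C)$ after dividing by $\hbar$ (legitimate because $H^*_\bT(\tX;\C)$ is free over $\C[\hbar]$). For the Hilbert-series step one needs the two appendix inputs — that $\Hilb(R'_\bT(\tX);t) = (1-t)^{-1}\Hilb(\OT;t)$ (Theorem \ref{thm-apx} and Remark \ref{same OT}) and that $\Hilb(\OT;t)$ equals $(1-t)^{-\rk N}$ times the $h$-polynomial of the broken circuit complex (Proposition \ref{ranks}) — matched against the computation that $\Hilb(\IHz;t)$ is this same $h$-polynomial \cite[4.3]{PW07}; and the equality $W_0 = V_0$ needed in \ref{HilbVW} rests on Lemma \ref{what's in W0}, which guarantees that $\init(W_0)$ contains the kernel of the projection $H^*_T(\tX;\C) \to \SRbc$ onto the Stanley--Reisner ring of the broken circuit complex. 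With those in hand, Theorem \ref{hypertoric main} is exactly the conjunction of Corollary \ref{surj} and Corollary \ref{all the same}.
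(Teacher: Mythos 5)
Your proof is correct and is essentially identical to the paper's: the theorem is exactly the conjunction of Corollary \ref{surj} (surjectivity) and Corollary \ref{all the same} ($U=V$), and your account of how those corollaries are established matches the paper's argument step for step.
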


\begin{proof}
The first part of the conjecture is Corollary \ref{surj}, while the second is
Corollary \ref{all the same}.
\end{proof}

\subsection{Comparison with previous work}\label{sec:BP}
Let $\OT := R'_\bT(\tX)/\langle\hbar\rangle$; this algebra is called the {\bf Orlik-Terao algebra}.
In an earlier paper, Braden and the second author showed that $I\! H_T^*(X; \C)$ is canonically
isomorphic to $\OT$ \cite[4.5]{TP08}.  The first thing we want to establish is that the isomorphism
in this paper is the same as the isomorphism in that paper.

\begin{proposition}\label{same iso}
The isomorphism from $I\! H_T^*(X; \C)$ to $\OT$ induced by $\psi_\bT$ (after setting $\hbar$ equal to zero)
coincides with the isomorphism in \cite{TP08}.
\end{proposition}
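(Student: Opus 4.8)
The plan is to reduce the comparison to a uniqueness statement about the isomorphism in \cite{TP08}. Recall that Braden and the second author characterized their isomorphism $I\!H_T^*(X;\C)\cong \OT$ not by an explicit formula but by a functoriality property: it is the unique isomorphism of graded $A_\C$-algebras compatible with the natural maps coming from localization / restriction to torus-fixed points, or equivalently compatible with the combinatorial ``gluing'' of the pieces associated to the vertices of the hyperplane arrangement. So the first step is to recall the precise characterization from \cite{TP08}: there is a canonical embedding of $I\!H_T^*(X;\C)$ into a product of polynomial rings indexed by the vertices of the arrangement (the fixed points of $\tX$), and the isomorphism with $\OT$ is the one that matches the two embeddings into this product.

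Second, I would observe that our isomorphism also respects this structure. By Remark \ref{getting rid of G}, it suffices to work $T$-equivariantly and then set $\hbar=0$; and by Proposition \ref{ih}, our map $\psi_\bT$ identifies $I\!H_T^*(X;\C)$ with the quotient $H^*_\bT(\tX;\C)/V = R'_\bT(\tX)$, hence after killing $\hbar$ with $\OT = R'_\bT(\tX)/\langle\hbar\rangle$. The key point is that $\psi_\bT$ is built from the inclusion $H^*_\bT(\tX;\C)\hookrightarrow \QHTpol$ followed by specialization, and both quantum cohomology and its specialization sit inside the localized ring $\QHloc$, which by the usual localization theorem embeds into the product of the fixed-point polynomial rings. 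Concretely, quantum multiplication and classical multiplication agree after restriction to any fixed point up to the curve-counting corrections, and when we set $q=1$ and pass to $\OT$, the resulting map on fixed-point data is exactly the classical restriction map $H^*_T(\tX;\C)\to \prod_{p} H^*_T(p;\C)$ followed by the projection that cuts out $\OT\subset\prod_p H^*_T(p)$. Hence the composite $I\!H_T^*(X;\C)\xrightarrow{\psi_\bT}\OT\hookrightarrow\prod_p H^*_T(p)$ agrees with the canonical embedding of $I\!H_T^*(X;\C)$.

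Third, I would invoke the uniqueness from \cite{TP08}: since both our isomorphism and theirs are graded $A_\C$-algebra isomorphisms $I\!H_T^*(X;\C)\to\OT$ inducing the same map into the product of fixed-point rings, they coincide. Alternatively, if one prefers an even more elementary route, one can check that both isomorphisms send the image of each equivariant Chern class $u_i\in H^*_T(\tX;\C)$ to the same generator $u_i\in\OT$ — this is immediate from Corollary \ref{RT} on our side and is the normalization used in \cite{TP08} — and then note that the $u_i$ together with $A_\C$ generate $I\!H_T^*(X;\C)$ (by Corollary \ref{small monomials} and Proposition \ref{small quantum monomials}, the independent monomials $u_S$ span), so an $A_\C$-algebra map out of $I\!H_T^*(X;\C)$ is determined by its values on the $u_i$.

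The main obstacle is the second step: pinning down exactly what ``the isomorphism in \cite{TP08}'' is in a form that makes the comparison a formal consequence rather than a computation. The functorial characterization in \cite{TP08} is phrased in terms of a sheaf-theoretic or combinatorial gluing construction on the arrangement, and one has to translate that into the statement ``it is the unique $A_\C$-algebra isomorphism sending $u_i\mapsto u_i$'' (or into the fixed-point-embedding statement above). Once that translation is in hand — and it essentially amounts to checking that both descriptions of $\OT$ use the same generators $u_i$, which is how $\OT$ is defined in both papers — the proof is short. I expect the actual write-up to spend most of its length recalling the relevant definitions and normalization from \cite{TP08} and then verifying that $\psi_\bT(u_i) = u_i$, with the uniqueness step occupying a single sentence.
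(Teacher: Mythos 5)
Your overall strategy --- pin down the isomorphism of \cite{TP08} by a uniqueness/functoriality property and then check that the one induced by $\psi_\bT$ satisfies the same property --- is exactly the strategy of the paper. But the specific property you invoke is not the one \cite{TP08} supplies, and you yourself flag this translation as the ``main obstacle'' without resolving it. The characterization in \cite{TP08} is not an embedding of $I\!H_T^*(X;\C)$ and $\OT$ into a product of fixed-point rings of $\tX$; it is compatibility with restriction to the normal slices $X_F$ associated to flats $F$ of the matroid. Concretely, for each flat $F$ one has $I\!H^*_T(X;\C)\to I\!H^*_{T_F}(X_F;\C)$ (via the normally nonsingular inclusion of a neighborhood of the cone point of $X_F$) and $\OT\to\OT_F$ (setting $u_i=0$ for $i\notin F$), and the isomorphisms of \cite{TP08} are the unique ones making all of these squares commute. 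The verification that our isomorphism also makes these squares commute is then short but geometric: the inclusion $X_F\hookrightarrow X$ lifts to $\tX_F\hookrightarrow\tX$, and the induced map $H^*_T(\tX;\C)\to H^*_{T_F}(\tX_F;\C)$ kills $u_i$ for $i\notin F$. Your localization picture would require a separate argument that $\OT$ even embeds compatibly into fixed-point data ($\OT$ is a quotient of $H^*_T(\tX;\C)$, not a subring, so the restriction $H^*_T(\tX;\C)\hookrightarrow\prod_pH^*_T(p;\C)$ does not factor through it in any obvious way), and no such characterization is available in \cite{TP08} to quote.

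Your proposed ``more elementary route'' has a genuine gap. You claim that both isomorphisms are determined by sending $u_i\mapsto u_i$ because the independent monomials $u_S$ span and ``an $A_\C$-algebra map out of $I\!H_T^*(X;\C)$ is determined by its values on the $u_i$.'' But $I\!H_T^*(X;\C)$ carries no a priori ring structure --- producing one is the entire point of the paper --- so there is no product of the classes $u_i$ inside $I\!H$ against which to test the two maps. Two graded $A_\C$-module isomorphisms $I\!H_T^*(X;\C)\to\OT$ that agree in degree $2$ can perfectly well disagree in higher degrees; uniqueness in \cite{TP08} is obtained precisely from the flat-restriction functoriality, not from a choice of algebra generators. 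So the elementary route cannot be repaired without reintroducing the functoriality argument.
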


\begin{proof}
Let $F\subset [n]$ be a flat of the matroid represented by the vectors $a_1,\ldots,a_n$.
Working only with the vectors $\{a_i\mid i\in F\}$, we obtain an algebra $\OT_F$ which is isomorphic
to the quotient of $\OT$ by the ideal generated by $\{u_i\mid i\notin F\}$.
We also obtain a hypertoric variety $X_F$ which is a ``normal slice" to a stratum of $X$;
in particular, this means that an analytic neighborhood of the cone point of $X_F$ admits a normally nonsingular
inclusion into $X$ \cite[2.5]{PW07}.  Since $X_F$ is a cone, we may assume that our analytic neighborhood
is equivariantly (with respect to the maximal compact subtorus of $T_F$) homeomorphic to $X_F$ itself. 
This inclusion is easily seen to be equivariant with respect to
the maximal compact subtorus of $T_F \subset T$, and therefore induces a map
$$I\! H^*_T(X; \C)\to I\! H^*_{T_F}(X; \C) \to I\! H^*_{T_F}(X_F; \C).$$  
The isomorphisms in \cite{TP08} are the unique isomorphisms
such that the diagrams 
\[\tikz[->, thick]{
\matrix[row sep=10mm,column sep=10mm,ampersand replacement=\&]{
\node (a) {$I\! H^*_T(X; \C)$}; \& \node (c) {$\OT$};\\
\node (b) {$I\! H^*_{T_F}(X_F; \C)$}; \& \node (d) {$\OT_F$};\\
};
\draw (a) -- (c);
\draw (a) -- (b);
\draw (b) -- (d);
\draw (c) -- (d);
}\] 
commute for all $F$.  Thus it is sufficient to show that this diagram commutes using the isomorphisms
constructed in this paper for the horizontal arrows.
This follows from the fact that the inclusion of $X_F$ into $X$ lifts to an inclusion of $\tX_F$ into $\tX$ \cite[2.5]{PW07},
and the induced map from $H^*_T(\tX; \C)$ to $H^*_{T_F}(\tX_F; \C)$ is given by setting $u_i$ to zero for all $i\notin F$.
\end{proof}

We conclude by discussing some of the advantages and disadvantages of the two approaches.
The main advantage of \cite{TP08} is that the ring structure is defined at a higher categorical level:
it is shown there that the intersection cohomology sheaf $\operatorname{IC}_X$ admits
the structure of a ring object in the equivariant derived category of constructible sheaves on $X$,
and that the isomorphism from $I\! H_T^*(X; \C)$ to $\OT$ is compatible with this structure.

On the other hand, there are two advantages to the approach we take in this paper.  The first is that we work $\bT$-equivariantly
rather than $T$-equivariantly.  This may not seem like a big deal, but it is not so easy to modify the techniques
of \cite{TP08} to account for the extra $\cs$-action.  Any attempt in this direction would have to begin
with a proof of Theorem \ref{thm-apx}.  

The second, and more significant, advantage of our approach is that
the isomorphism in \cite{TP08} comes out of nowhere: one simply shows that the ring $\OT$ has the same Hilbert
series and functorial properties as $I\! H^*_T(X; \C)$, and that these functorial properties are sufficiently rigid to ensure
that the two groups are canonically isomorphic.  In contrast, the isomorphism in this paper is induced
by the natural map $\psi_\bT$, and can be (at least conjecturally)
generalized to arbitrary conical symplectic resolutions.

\appendix
\section{The Orlik-Terao algebra}
In this paper we have required two technical results 
about the Orlik-Terao algebra of a collection of vectors (Theorems \ref{monomial span} and \ref{thm-apx}).
Since we believe that these two statements may be of general interest in the theory of hyperplane arrangements, we
put them in an appendix which may be read independently from the rest of the paper.

\subsection{A spanning set}\label{sec:spanning}
Let $k$ be a field of characteristic zero.  
Let $V$ be a vector space over $k$, and let $a_1,\ldots,a_n$
be nonzero linear functions on $V$ that span $V^*$.
Let $I\subset k[u_1,\ldots,u_n]$ be the kernel of the map
taking $u_i$ to $a_i^{-1}$.  The graded $k$-algebra $\OT := k[u_1,\ldots,u_n]/I$
is called the {\bf Orlik-Terao} algebra.

For any subset $S\subset [n]$, let $u_S := \prod_{i\in S} u_i$.
A set $C\subset [n]$ is called {\bf dependent} if there exist constants $\{\eta_i\mid i\in C\}$, not all zero, such that 
$\sum \eta_ia_i = 0$.  In this case, we have a nontrivial element
$$\fcz := \sum_{i\in C}\eta_i u_{C\smallsetminus\{i\}}\in I.$$
This notation is somewhat sloppy, as $\fcz$ depends not only on $C$, but also on the constants $\eta_i$.
However, if $C$ is a {\bf circuit} (a minimal dependent set), then the constants are determined
up to a global nonzero scalar, thus the same is true for $\fcz$.

Note that if our collection of vectors is unimodular and $C$ is a circuit, then we may take $\eta_i=\pm 1$
for all $i$, and then $\fcz$ will be the polynomial obtained from the polynomial $\fch$ of Corollary \ref{RT}
(and also of Section \ref{sec:def})
by setting $\hbar$ equal to zero; this explains our funny notation.

The following result is proved in \cite[Theorem 4]{PS}.

\begin{theorem}\label{grob}
The set $\{\fcz\mid \text{$C$ a circuit}\}$ is a universal Gr\"obner basis for $I$.
\end{theorem}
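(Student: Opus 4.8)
The statement to prove is Theorem \ref{grob}: that $\{\fcz \mid C \text{ a circuit}\}$ is a universal Gröbner basis for the ideal $I$, i.e.\ a Gröbner basis with respect to every monomial order on $k[u_1,\ldots,u_n]$. Since this is attributed to \cite[Theorem 4]{PS}, the plan is to reproduce (a streamlined version of) that argument. The natural strategy is to fix an arbitrary monomial order $\prec$ and apply Buchberger's criterion: show that every $S$-polynomial $S(\fcz[C], \fcz[D])$ of two circuit polynomials reduces to zero modulo the set $\{\fcz\}$. The key structural input is the combinatorics of circuits in the matroid defined by $a_1,\ldots,a_n$ — in particular the circuit exchange axiom: if $C, D$ are distinct circuits and $e \in C \cap D$, then for any $f \in C \setminus D$ there is a circuit $E \subseteq (C \cup D) \setminus \{e\}$ with $f \in E$.

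\textbf{Key steps.} First I would observe that $I$ is a binomial-like ideal generated in a controlled way: although $\fcz$ is a sum of squarefree monomials rather than a binomial, the $S$-polynomial of $\fcz[C]$ and $\fcz[D]$ is supported on monomials indexed by subsets of $C \cup D$, and one can bound its degree. Second, I would analyze the leading term $\init_\prec(\fcz[C])$: it is one of the squarefree monomials $u_{C \setminus \{i\}}$, the one whose missing index $i$ is $\prec$-largest (equivalently, $u_i$ is smallest) among $i \in C$ — call it $i_C$. So $\init_\prec(\fcz[C]) = u_{C \setminus \{i_C\}}$, always a squarefree monomial of degree $|C| - 1$. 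Third, for two circuits $C \neq D$ I would split into cases according to whether $\init(\fcz[C])$ and $\init(\fcz[D])$ share variables, and whether $C \cup D$ is itself dependent; in the generic overlap case I would use the elimination/circuit-exchange relation among $C$, $D$, and a third circuit $E \subseteq C \cup D$ to rewrite the $S$-polynomial. The Plücker-type three-term relations among the $a_i$ (linear dependencies) translate into algebraic relations among the $\fcz$'s that realize precisely the needed reduction. Finally, since the squarefree leading terms $\{u_{C\setminus\{i_C\}}\}$ form the generators of a squarefree monomial ideal — the Stanley–Reisner ideal of the broken circuit complex, as the footnote after Lemma \ref{HilbVW} anticipates — a dimension count comparing $k[u]/\langle \init_\prec(I)\rangle$ against the known Hilbert series of $\OT$ could serve as an alternative closing move in place of a fully explicit Buchberger check.

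\textbf{Main obstacle.} The hard part will be the $S$-polynomial reduction in the case where the leading monomials of $\fcz[C]$ and $\fcz[D]$ overlap substantially and $C \cup D$ supports several nested circuits — there the bookkeeping of which circuit polynomials to subtract, and with which coefficients (the $\eta_i$'s from the linear dependencies), becomes delicate, because $\fcz$ is only defined up to scalar and the coefficients must be tracked consistently across the exchange. A clean way around the worst of this is the alternative route: prove the inclusion $\langle \init_\prec(\fcz[C]) : C \text{ a circuit}\rangle \subseteq \init_\prec(I)$ (which is immediate) and then argue equality by showing the quotient by the left-hand side already has the correct Hilbert series. Since $\SRbc = k[u_1,\ldots,u_n]/\langle u_{\Cbar} : C \text{ a circuit}\rangle$ and $\dim_k (\SRbc)_d = \dim_k \OT_d$ by the broken-circuit identity used in \cite[4.3]{PW07}, and $\OT \cong k[u]/I$, a standard comparison forces $\init_\prec(I) = \langle \init_\prec(\fcz[C])\rangle$, hence the $\fcz$ form a Gröbner basis; since $\prec$ was arbitrary, this universal property follows. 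I would present the argument this way, keeping the explicit $S$-polynomial computation as a remark, since it is the part most likely to hide sign errors.
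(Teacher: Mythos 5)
Your second route --- the Hilbert-series comparison --- is in substance the argument of the cited reference \cite[Theorem 4]{PS}, which is all the paper itself offers for this statement; the opening Buchberger/$S$-polynomial plan is not what is done there, and you are right to discard it, since the circuit-exchange bookkeeping it requires is exactly the part that the Hilbert-series argument lets you avoid. Two points of care before this closes. First, for an arbitrary monomial order $\prec$ the initial term of $\fcz$ is $u_{C\smallsetminus\{i\}}$ where $u_i$ is the $\prec$-smallest variable occurring in $C$, so the initial monomials are the broken circuits for the linear order on $[n]$ obtained by restricting $\prec$ to the variables; the comparison is uniform over all $\prec$ (hence ``universal'') only because the $h$-vector of the broken circuit complex is independent of the chosen linear order, and this should be said. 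Second, and more seriously, the input $\Hilb(\OT;t)=\Hilb(\SRbc;t)$ must be sourced from something that does not presuppose the Gr\"obner-basis statement: the correct reference is Proposition \ref{ranks} (that is, \cite[Propositions 1 \& 7]{PS}), where the freeness and graded rank of $\OT$ over $\Sym(V)$ are established by deletion--contraction independently of the degeneration. Your citation of \cite[4.3]{PW07} concerns intersection cohomology and reaches $\OT$ only through \cite[4.5]{TP08}, which itself rests on \cite{PS}; as written that step is circular. The gap is one of sourcing rather than substance, but it is the one place where the argument as stated does not yet stand on its own.
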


For any circuit $C$, let $\Cbar$ be the set obtained from $C$ by deleting the maximal element.
Let $$\SRind := k[u_1,\ldots,u_n]/\langle u_C\mid \text{$C$ a circuit}\rangle
\and
\SRbc := k[u_1,\ldots,u_n]/\langle u_{\Cbar}\mid \text{$C$ a circuit}\rangle.$$
These algebras are called the {\bf Stanley-Reisner rings} of the independence complex
and the broken circuit complex, respectively.  Note that $u_{\Cbar}$ is (up to scale)
the initial term of $\fcz$, hence Theorem \ref{grob} says exactly that $\SRbc$ is a flat degeneration of $\OT$.

Consider the map $\Sym(V)\to k[u_1,\ldots,u_n]$ taking $v\in V$ to $\sum a_i(v)u_i$.  This makes
the algebras $\OT$, $\SRind$, and $\SRbc$ into graded $\Sym(V)$-algebras.  The following result is proved
in \cite[Propositions 1 \& 7]{PS}.

\begin{proposition}\label{ranks}
The rings $\OT$, $\SRind$, and $\SRbc$
are free as graded $\Sym(V)$-modules.
The graded rank of $\SRind$ is 
given by the $h$-numbers of the independence complex,
while the graded ranks of $\OT$ and $\SRbc$ are
given by the $h$-numbers of the broken circuit complex.
\end{proposition}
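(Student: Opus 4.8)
The plan is to prove all three statements at once, treating $R\in\{\OT,\SRind,\SRbc\}$ uniformly. Each $R$ is a graded quotient of $k[u_1,\dots,u_n]$, and the map $v\mapsto \ell_v:=\sum_i a_i(v)u_i$ identifies $\Sym(V)$ with a polynomial subring on $d:=\dim V$ linear forms (injectivity holds because the $a_i$ span $V^*$). For each $R$ I would establish two things: (i) $R$ is Cohen--Macaulay of Krull dimension $d$; and (ii) the images $\ell_{v_1},\dots,\ell_{v_d}$ of a basis of $V$ form a linear system of parameters for $R$. Granting (i) and (ii), $R$ is a maximal Cohen--Macaulay module over the regular ring $\Sym(V)$, hence $\operatorname{depth}_{\Sym(V)}R=d$ and $R$ is free by Auslander--Buchsbaum; its graded rank is then $(1-t)^d\,\Hilb(R;t)$, which for $\SRind$ (resp.\ $\SRbc$) is by definition the $h$-polynomial of the independence complex (resp.\ the broken circuit complex), and for $\OT$ coincides with that of $\SRbc$ by the degeneration discussed below.

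For the two Stanley--Reisner rings, (i) is classical: the matroid independence complex and the broken circuit complex are both pure of dimension $d-1$ and shellable (Provan--Billera, Björner), hence Cohen--Macaulay. For (ii) I would invoke the standard criterion that linear forms $\sum_i c_{ji}u_i$, $j=1,\dots,d$, form a linear system of parameters for $k[\Delta]$ precisely when the coefficient submatrix $(c_{ji})_{i\in F}$ is invertible for every facet $F$ of $\Delta$. Since every facet of either complex is a basis of the matroid of $a_1,\dots,a_n$, the set $\{a_i\mid i\in F\}$ is then a basis of $V^*$, so $(a_i(v_j))_{i\in F,\,j}$ is invertible for \emph{any} basis $\{v_j\}$ of $V$. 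This settles the $\SRind$ and $\SRbc$ cases.

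For $\OT$: by Theorem \ref{grob} the initial ideal of $I$ (for a suitable term order) is $\langle u_{\Cbar}\mid C\text{ a circuit}\rangle$, the Stanley--Reisner ideal of the broken circuit complex. Hence $\Hilb(\OT;t)=\Hilb(\SRbc;t)$, and since passing to an initial ideal preserves Krull dimension and cannot increase depth, Cohen--Macaulayness of $\SRbc$ forces (i) for $\OT$, with $\dim\OT=d$ (equivalently, $\OT$ is a domain with fraction field $k(a_1^{-1},\dots,a_n^{-1})=k(V^*)$, of transcendence degree $d$). For (ii), observe that $V(\ell_{v_1},\dots,\ell_{v_d})=L^\perp\subset\mathbb{A}^n$, where $L:=\{(a_1(v),\dots,a_n(v))\mid v\in V\}$; thus (ii) is equivalent to $\Spec\OT\cap L^\perp=\{0\}$. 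On the dense chart of $\Spec\OT$, consisting of points $(a_1(x)^{-1},\dots,a_n(x)^{-1})$ with $x$ off the arrangement, the functional $v\mapsto\sum_i a_i(v)a_i(x)^{-1}$ on $V$ is nonzero because its value at $v=x$ is $\sum_i a_i(x)a_i(x)^{-1}=n\neq 0$ in characteristic zero. For a point $0\neq y\in\Spec\OT$ on the boundary, I would choose by the valuative criterion a discrete valuation ring $\mathcal{O}\supseteq k$ with fraction field $F$ and a point $x\in(L\otimes_k F)\cap(F^\times)^n$ such that $x_i^{-1}\in\mathcal{O}$ and $x_i^{-1}\equiv y_i$ modulo the maximal ideal of $\mathcal{O}$; then $\sum_i x_iy_i\in\mathcal{O}$ reduces to $|\{i\mid y_i\neq 0\}|\neq 0$, hence is nonzero in $F$, while $\sum_i x_iy_i$ would vanish if $y\in L^\perp$ because $x\in L\otimes_k F$ and $y\perp L$ over $k$ — a contradiction. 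This gives $\Spec\OT\cap L^\perp=\{0\}$, completing (ii).

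I expect step (ii) for $\OT$, that is, $\Spec\OT\cap L^\perp=\{0\}$, to be the main obstacle: it is the one place where the characteristic-zero hypothesis and the actual geometry of the Orlik--Terao ideal (beyond its Gröbner degeneration to $\SRbc$) genuinely enter, whereas everything else is a packaging of standard facts about Cohen--Macaulay rings, Gröbner degenerations, and shellable matroid complexes. This is, in essence, the content of \cite[Propositions 1 \& 7]{PS}.
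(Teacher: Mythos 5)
The paper does not actually prove this proposition --- it cites \cite[Propositions 1 \& 7]{PS} --- and your argument is essentially a correct reconstruction of the cited proof: Cohen--Macaulayness of the two Stanley--Reisner rings via purity and shellability of the matroid and broken circuit complexes, transferred to $\OT$ through the Gr\"obner degeneration of Theorem \ref{grob}, followed by the verification that the forms $\ell_v$ form a linear system of parameters, which for $\OT$ comes down to showing $\Spec \OT\cap L^\perp=\{0\}$ by a specialization argument in which characteristic zero enters exactly through the nonvanishing of $|\{i\mid y_i\neq 0\}|$. The individual steps check out, including the direction of the depth inequality under passage to an initial ideal and the reduction of the l.s.o.p.\ criterion to facets (columns of an invertible matrix indexed by a basis remain independent on subsets), so this is a faithful, correct version of the argument the paper outsources to \cite{PS}.
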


The main result of this subsection is the following.

\begin{theorem}\label{monomial span}
The ring $\OT$ is spanned over $\Sym(V)$ by elements of the form $u_S$
where $S\subset [n]$ is independent.
\end{theorem}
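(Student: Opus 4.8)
The plan is to deduce the spanning statement from the Gröbner-basis result, Theorem \ref{grob}, together with the freeness statement, Proposition \ref{ranks}. First I would observe that since $\{\fcz\}$ is a (universal) Gröbner basis for $I$ with respect to the degree-lexicographic order in which $u_{\Cbar}$ is the initial term of $\fcz$, the initial ideal $\init(I)$ is generated by the monomials $\{u_{\Cbar}\mid C\text{ a circuit}\}$. Hence the standard monomials for $I$ — those not divisible by any $u_{\Cbar}$ — form a $k$-basis for $\OT$, and they are exactly the monomials $\prod u_i^{d_i}$ whose support contains no broken circuit; in particular, each such standard monomial has \emph{independent} support (if the support contained a circuit $C$, it would contain the broken circuit $\Cbar$). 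So $\OT$ already has a monomial $k$-basis supported on independent sets, though with arbitrary multiplicities.

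The remaining point is to reduce multiplicities: I want to show that the \emph{square-free} monomials $u_S$ with $S$ independent span $\OT$ over $\Sym(V)$. The idea is a downward induction on $|S|$ (equivalently, an upward induction on codimension/degree), using the $\Sym(V)$-module structure. Given a standard monomial $m = u_S\prod_{i\in S}u_i^{d_i}$ with some $d_i>0$, I would pick $i\in S$ with $d_i>0$; since $S$ is independent, the functions $\{a_k\mid k\in S\}$ are linearly independent, so there is $v\in V$ with $a_i(v)=1$ and $a_k(v)=0$ for all $k\in S\smallsetminus\{i\}$. Then the image of $v$ in $k[u_1,\dots,u_n]$ is $u_i+\sum_{k\notin S}a_k(v)u_k$, and multiplying the lower-multiplicity monomial $u_S\prod_{k\in S}u_k^{d_k}/u_i$ by this element expresses $m$, modulo $\Sym(V)$-combinations of monomials supported on $S\cup\{k\}$ for $k\notin S$, in terms of monomials of strictly smaller total $u$-degree-in-$S$-multiplicity. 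Iterating drives all multiplicities down to $1$; the newly introduced supports $S\cup\{k\}$ are handled by the downward induction on $|S|$ (larger independent sets first, the maximal ones giving $u_S$ with $u_S=0$ if dependent, so the base case is trivial). This is exactly the mechanism used in the proof of Lemma \ref{what's in W0}, transplanted to $\OT$ itself.

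The main obstacle, and the step that needs care, is making the two-layered induction genuinely terminate and keeping track of which monomials are produced at each stage: multiplying by $\pi^\vee(x)$ reintroduces monomials on larger supports, so one must order the induction so that "shrink the multiplicities on a fixed $S$" happens inside "handle all $S'\supsetneq S$ first." One also has to confirm that a standard monomial always has independent support — immediate from the broken-circuit description above — and that $u_S=0$ in $\OT$ whenever $S$ is dependent, which is Theorem \ref{grob} again (any dependent $S$ contains a circuit $C$, and $u_C\in I$, so $u_S\in I$). With these bookkeeping points settled the argument is routine; the substance is entirely in Theorems \ref{grob} and the linear-algebra lemma above, both of which are available.
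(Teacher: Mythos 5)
Your first paragraph is fine: the standard monomials for $I$ are exactly those whose support contains no broken circuit, hence they have independent support and give a monomial $k$-basis of $\OT$. The gap is in the second step, and it is fatal as written: you assert that $u_S=0$ in $\OT$ whenever $S$ is dependent, ``since $u_C\in I$ for a circuit $C$.'' This is false. The ideal $I$ is the kernel of $u_i\mapsto a_i^{-1}$, so $u_C\mapsto\prod_{i\in C}a_i^{-1}\neq 0$; the Orlik--Terao relation attached to a circuit is $\fcz=\sum_{i\in C}\eta_i\,u_{C\smallsetminus\{i\}}$, not the monomial $u_C$. (Smallest example: $n=2$, $a_1=a_2$, so $\OT=k[u_1,u_2]/(u_1-u_2)\cong k[u]$ and $u_1u_2=u^2\neq 0$, even though $\{1,2\}$ is a circuit.) Monomials with dependent support vanish in the Stanley--Reisner rings $\SRind$ and $\SRbc$, but not in $\OT$.

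This removes the base case of your downward induction on $|S|$, and with it the termination guarantee for the whole double induction: the multiplicity-reduction step produces monomials on the enlarged supports $S\cup\{k\}$, which may be dependent; these do not die, and rewriting them modulo $I$ via the Gr\"obner basis can shrink the support again while raising multiplicities, so no quantity visibly decreases. This is precisely why the mechanism of Lemma \ref{what's in W0} cannot be ``transplanted to $\OT$ itself'': that argument lives in $\C[u]/\langle u_C\rangle$, where dependent monomials are zero. The paper's proof respects this distinction: it runs your multiplicity-reduction argument inside $\SRbc$ (that is Lemma \ref{independence}, where the base case is available), and then transports the conclusion to $\OT$ along the graded $\Sym(V)$-module isomorphism $\varphi\colon\SRbc\to\OT$ of \cite[3.12]{TP08}. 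The real work is then Lemma \ref{the point}, which uses the compatibility of $\varphi$ with restriction to flats, together with a degree count, to show that $\varphi(u_S)$ is again a combination of square-free monomials on independent sets. Your proposal needs a substitute for that step; I do not see how to avoid it by a purely direct rewriting argument in $\OT$.
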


We begin by proving the analogous statement for $\SRind$ and $\SRbc$,
which will be used in the proof of Theorem \ref{monomial span}.

\begin{lemma}\label{independence}
The rings $\SRind$ and $\SRbc$ are spanned over $\Sym(V)$ by elements of the form $u_S$
where $S\subset [n]$ is independent.\footnote{The proof of this lemma does not require
$k$ to be a field; in particular, it holds over the integers.}
\end{lemma}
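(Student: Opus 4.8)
The plan is to reduce the statement to monomials and then run an induction that trades multiplicity for support size. Recall that for any simplicial complex $\Delta$ on $[n]$, the Stanley--Reisner ring $k[u_1,\ldots,u_n]/I_\Delta$ has $k$-basis given by the monomials $u^{\underline d}$ whose support $\supp(\underline d)$ is a face of $\Delta$, and that every monomial whose support is not a face maps to $0$. Applying this to the independence complex (for $\SRind$) and to the broken circuit complex (for $\SRbc$), and noting that a set containing no broken circuit contains no circuit and is therefore independent, it is enough to prove the following uniform statement: if $S := \supp(\underline d)$ is a face of the relevant complex — so in particular $S$ is independent — then $u^{\underline d}$ lies in the $\Sym(V)$-submodule $M$ spanned by $\{u_T\mid T\subset[n]\text{ independent}\}$.

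I would prove this by induction on the excess $\varepsilon(\underline d):=\deg u^{\underline d}-|S|=\sum_i d_i-|S|$. If $\varepsilon(\underline d)=0$ then $u^{\underline d}=u_S$ with $S$ independent and there is nothing to prove. If $\varepsilon(\underline d)\geq 1$, choose $i\in S$ with $d_i\geq 2$. Since $S$ is independent, the vectors $\{a_k\mid k\in S\}$ are linearly independent in $V^*$, so the map $V\to k^S$, $v\mapsto(a_k(v))_{k\in S}$, is surjective; pick $v\in V$ with $a_i(v)=1$ and $a_k(v)=0$ for all $k\in S\smallsetminus\{i\}$. Writing $p_v:=\sum_{k=1}^n a_k(v)\,u_k$ for the image of $v$ under $\Sym(V)\to k[u_1,\ldots,u_n]$, the choice of $v$ gives $p_v=u_i+\sum_{k\notin S}a_k(v)\,u_k$. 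Multiplying by $u^{\underline d-e_i}$ and reducing modulo the Stanley--Reisner ideal yields
$$u^{\underline d}\;=\;p_v\cdot u^{\underline d-e_i}\;-\;\sum_{k\notin S}a_k(v)\;u_k\,u^{\underline d-e_i}.$$
The monomial $u^{\underline d-e_i}$ has support $S$ (because $d_i\geq 2$) and excess $\varepsilon(\underline d)-1$, so by induction it lies in $M$; since $M$ is a $\Sym(V)$-submodule, so does $p_v\cdot u^{\underline d-e_i}$. For each $k\notin S$ the monomial $u_k\,u^{\underline d-e_i}$ has support $S\cup\{k\}$: if this set is not a face it maps to $0$ in the quotient, and if it is a face then it is independent with excess $\varepsilon(\underline d)-1$, so induction again puts it in $M$. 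Hence $u^{\underline d}\in M$, which completes the induction; the argument applies verbatim to $\SRind$ and $\SRbc$, the only difference being which monomial ideal one works modulo.

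The step I expect to need the most care is precisely the choice of $v$ and the resulting cancellation: the terms with $k\in S\smallsetminus\{i\}$ would have the same excess $\varepsilon(\underline d)$ as $u^{\underline d}$ and would wreck the induction, and they are eliminated exactly because $S$ is independent. The remaining error terms behave well because replacing one copy of $u_i$ by a new variable $u_k$ keeps the degree fixed while enlarging the support, so the excess strictly drops. (The argument also goes through over $\Z$ when the collection $a_1,\ldots,a_n$ is unimodular, since then every independent subset extends to a lattice basis and $v$ may be chosen in $N^\vee$.)
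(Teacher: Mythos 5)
Your proof is correct and is essentially the argument the paper gives: the same substitution of $u_i$ by an element of $\Sym(V)\cdot 1$ chosen to vanish on the other vectors of the independent support, trading one unit of multiplicity for monomials of strictly larger support (or zero), with termination by descent on the excess. The only cosmetic differences are that you make the induction measure explicit and zero $v$ only on $S\smallsetminus\{i\}$ rather than on a full basis containing $S$, neither of which changes the substance.
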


\begin{proof}
First note that $\SRbc$ is a quotient of $\SRind$, so it is sufficient to prove the lemma
only for $\SRind$.
Since $u_S$ vanishes whenever $S$ contains a circuit, it is sufficient to prove that $\SRind$
is spanned over $\Sym(V)$ by square-free monomials.  This is equivalent to showing that $\SRind\otimes_{\Sym(V)}k$
is spanned over $k$ by square-free monomials.

Consider an arbitrary monomial $u^\sigma$ for some $\sigma\in\N^n$
with independent support.
This means that there exists a set $B\subset[n]$ containing the support of $\sigma$
such that $\{a_i\mid i\in B\}$ is a basis for $V^*$.
If $\sigma_i\leq 1$ for all $i$, then we are already done, so let us suppose that there exists
an index $i\in[n]$ for which $\sigma_i>1$.  Consider the element $v\in V$ that pairs to 1
with $a_i$ and to 0 with $a_j$ for all $j\in B\smallsetminus\{i\}$, and let $u_v := v\cdot 1\in \SRind$.
By replacing $u_i$ with $u_i - u_v$ (which has the same image in $\SRind\otimes_{\Sym(V)}k$), 
we replace $u^\sigma$ with a sum of monomials of the form $u^\tau$,
where $\tau_i = \sigma_i-1$, $\tau_j = \sigma_j$ for all $j\in B\smallsetminus\{i\}$, and $\tau_k \leq 1$ for all $k\notin B$.
Applying this procedure recursively, we may express the image of $u^\sigma$ as a sum of square-free monomials.
\end{proof}

Given a subset $S\subset [n]$, let $\langle S \rangle$ be the set of all $i$ such that $a_i$ is contained in the $k$-linear
span of $\{a_j\mid j\in S\}$.  We always have $S\subset \langle S \rangle$; if $\langle S \rangle = S$, then $S$ is called a {\bf flat}.
Given any flat $F$, let $V_F$ be the quotient of $V$ by the elements that vanish on $a_i$ for all $i\in F$.
Then we can regard $\{a_i\mid i\in F\}$ as a set of linear functionals on $V_F$ that span $V_F^*$, which allows
us to define the $\Sym(V_F)$-algebras $(\SRbc)_F$ and $\OT_F$.
When $F= [n]$, we have $V_F = V$, $(\SRbc)_F = \SRbc$, and $\OT_F = \OT$.
  
We have canonical maps
$$\mu_F:\SRbc\to(\SRbc)_F\and \nu_F:\OT\to\OT_F$$ given by setting the variables not in $F$ to zero,
as well as sections $$\a_F:(\SRbc)_F\to\SRbc \and \b_F:\OT_F\to\OT$$ taking $u_i$ to $u_i$ for all $i\in F$.
The following result is proved in \cite[3.12]{TP08}.

\begin{theorem}
We may choose a $\Sym(V)$-module isomorphism $\varphi:\SRbc\to\OT$ and a
$\Sym(V_F)$-module isomorphism $\varphi_F:(\SRbc)_F\to\OT_F$ for every flat $F$
such that the diagram
\[\tikz[->, thick]{
\matrix[row sep=10mm,column sep=10mm,ampersand replacement=\&]{
\node (a) {$\SRbc$}; \& \node (c) {$\OT$};\\
\node (b) {$(\SRbc)_F$}; \& \node (d) {$\OT_F$};\\
};
\draw (a) --node[above]{$\varphi$} (c);
\draw (a) --node[left]{$\mu_F$} (b);
\draw (b) --node[above]{$\varphi_F$}(d);
\draw (c) --node[right]{$\nu_F$} (d);
}\]
commutes.  Furthermore, these choices are unique if we require that $\varphi(1) = 1$. 
\end{theorem}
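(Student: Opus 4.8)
The plan is to prove existence and uniqueness of the full family $(\varphi,\{\varphi_F\})$ simultaneously, by induction on $\rk V$ (equivalently on the rank of the matroid of $a_1,\dots,a_n$); the case $\rk V=0$, where every ring in sight is $k$, is trivial. Two observations organize the induction. First, since each $\mu_F$ is a split surjection of graded $\Sym(V)$-modules (split by the section $\alpha_F$), the square determines $\varphi_F$ uniquely from $\varphi$, and a solution that is an isomorphism exists precisely when $\varphi(\ker\mu_F)=\ker\nu_F$; as $\mu_{[n]}$ and $\nu_{[n]}$ are identities, everything reduces to producing a \emph{unique} graded $\Sym(V)$-module isomorphism $\varphi\colon\SRbc\to\OT$ with $\varphi(1)=1$ and $\varphi(\ker\mu_F)=\ker\nu_F$ for every flat $F$. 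Second, for a proper flat $F$ the rings $(\SRbc)_F$ and $\OT_F$ are the corresponding rings for the smaller-rank matroid $M|_F$ on $\{a_i\mid i\in F\}$, and one checks that $\{\varphi_{F'}\mid F'\subseteq F\}$ is itself a valid family for $M|_F$; so by the inductive hypothesis that sub-family is unique, and in particular the $\varphi_F$ for different proper $F$ agree on the overlaps $(\SRbc)_{F_1\cap F_2}$.

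For existence I would patch the $\varphi_F$ over proper flats and then match what remains ``on top''. Write $N:=\bigcap_{F\subsetneq[n]}\ker\mu_F$ and $N':=\bigcap_{F\subsetneq[n]}\ker\nu_F$ (equivalently, intersections over coatoms). The patched $\varphi_F$ induce a well-defined isomorphism $\SRbc/N\xrightarrow{\ \sim\ }\OT/N'$ compatible with all the $\mu_F,\nu_F$ for $F\subsetneq[n]$. By Proposition \ref{ranks}, $\SRbc$ and $\OT$ are free graded $\Sym(V)$-modules of the same graded rank; one checks the analogous statement for the quotients, so that $N$ and $N'$ are free of equal graded rank (morally the top $h$-number of the broken-circuit complex, i.e.\ the number of no-broken-circuit bases). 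Choosing graded $\Sym(V)$-splittings of $\SRbc\twoheadrightarrow\SRbc/N$ and $\OT\twoheadrightarrow\OT/N'$ and any graded isomorphism $N\to N'$ then yields a $\varphi$; compatibility with all flat restrictions is automatic because $N\subseteq\ker\mu_F$ and $N'\subseteq\ker\nu_F$ for every proper $F$, and a final unipotent adjustment supported on $N$ normalizes $\varphi(1)=1$.

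Uniqueness is the substantive part, and the main obstacle. Given two solutions, $\theta:=\varphi^{-1}\circ\varphi'$ is a graded $\Sym(V)$-automorphism of $\SRbc$ fixing $1$, and the inductive hypothesis applied to each $M|_F$ forces $\mu_F\circ\theta=\mu_F$ for every proper flat $F$, so $\theta-\mathrm{id}$ maps into $N$; since $N$ is concentrated in degrees $\ge\rk V$ this already gives $\theta=\mathrm{id}$ in all lower degrees. What is \emph{not} formal is ruling out a nonzero correction in the top degrees, where the flat restrictions impose no condition at all, so one must use the ring structure of $\SRbc$ rather than just its module structure. Here I would work with a restriction-set $\Sym(V)$-basis of $\SRbc$ arising from a shelling of the broken-circuit complex, compatible with the filtration by flats, and argue that a $\Sym(V)$-linear automorphism fixing $1$ and trivial modulo $N$ must in fact fix such a generating set, hence be the identity. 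This rigidity is exactly the content of \cite[3.12]{TP08}, where it is deduced geometrically from the functoriality of equivariant intersection cohomology under the normally nonsingular inclusions $X_F\hookrightarrow X$ of normal slices; giving a self-contained combinatorial proof of it is where the difficulty concentrates.
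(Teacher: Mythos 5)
The paper does not actually prove this statement: it is quoted verbatim from the earlier work of Braden and the second author, with the one-line attribution ``The following result is proved in \cite[3.12]{TP08}.'' So there is no internal argument to compare yours against, and the honest benchmark is whether your proposal would stand as a self-contained proof. It would not, for two concrete reasons. First, in the existence step, the assertion that the patched family $\{\varphi_F\}_{F\subsetneq[n]}$ ``induces a well-defined isomorphism $\SRbc/N\xrightarrow{\sim}\OT/N'$'' is exactly the point that needs proof: agreement of the $\varphi_F$ on overlaps $(\SRbc)_{F_1\cap F_2}$ does not imply that the tuple $\bigl(\varphi_F(\mu_F(x))\bigr)_F$ lies in the image of $\OT\to\prod_F\OT_F$. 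Identifying the image of $\SRbc$ inside $\prod_F(\SRbc)_F$ with the image of $\OT$ inside $\prod_F\OT_F$ is essentially the whole theorem, and your outline assumes it. (The auxiliary claims that $N$ and $N'$ are free $\Sym(V)$-modules of equal graded rank, and that a ``unipotent adjustment supported on $N$'' can normalize $\varphi(1)=1$ without breaking the other compatibilities, are also left unargued.)

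Second, and more seriously, the uniqueness argument is circular as written: after correctly reducing to showing that a graded $\Sym(V)$-automorphism $\theta$ of $\SRbc$ fixing $1$ and congruent to the identity modulo $N$ must be the identity in top degree, you state that ``this rigidity is exactly the content of \cite[3.12]{TP08}'' and stop. That is the statement you are supposed to be proving, so citing it cannot close the gap. Your structural analysis is sound and matches how one would naturally attack the problem --- reduction to a single $\varphi$ compatible with all flat restrictions, induction on rank, localization of the difficulty to the top graded piece where the flat conditions are vacuous --- but both of the genuinely hard steps (gluing the local isomorphisms into a global one, and top-degree rigidity) are missing. To complete the argument you would need the actual mechanism of \cite{TP08}, e.g.\ an analysis showing that the top-degree generators of $\SRbc$ and $\OT$ are pinned down by their images under the $\Sym(V)$-module structure together with the lower-degree data, or an explicit nbc-basis computation in $\OT$.
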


\begin{lemma}\label{the point}
Let $S\subset [n]$ be independent.
There exist constants $c_{S'}\in k$
for each independent set $S'\subset [n]$ with $\langle S'\rangle = \langle S\rangle$
such that
$$\varphi(u_S) = \sum_{S'} c_{S'} u_{S'} \in \OT.$$
\end{lemma}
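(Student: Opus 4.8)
The plan is to read off $\varphi(u_S)$ from its expansion in the standard--monomial basis of $\OT$, exploiting the compatibility of $\varphi$ with the restriction maps $\nu_G$ for \emph{all} flats $G$, not merely for $F:=\langle S\rangle$. First I would fix the degree--lexicographic term order with $u_1>\dots>u_n$; as noted after Theorem \ref{grob}, the initial term of each $f_{C,0}$ is a scalar multiple of $u_{\Cbar}$, so the standard monomials of $\OT$ are precisely the monomials not divisible by any $u_{\Cbar}$, i.e.\ those whose support contains no broken circuit, and such supports are automatically independent; these monomials form a $k$--basis of $\OT$. The same applies to $\OT_G$ for every flat $G$, with Gr\"obner basis $\{f_{C,0}\mid C\text{ a circuit},\ C\subseteq G\}$; moreover a standard monomial of $\OT$ whose support lies in $G$ is again a standard monomial of $\OT_G$. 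Hence $\nu_G$ carries the standard basis of $\OT$ into that of $\OT_G$: it annihilates those basis monomials whose support is not contained in $G$, and is injective on the span of the rest.

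Next I would write $\varphi(u_S)=\sum_\mu\lambda_\mu\,\mu$ over standard monomials $\mu$; since $\varphi$ is a graded isomorphism and the $u_i$ all have the same positive degree, each $\mu$ occurring is a product of $|S|$ (not necessarily distinct) variables $u_i$, so $|\supp\mu|\le|S|$. For any flat $G$ with $S\not\subseteq G$ one has $\mu_G(u_S)=0$ in $(\SRbc)_G$ (some variable $u_i$ with $i\in S\setminus G$ is set to zero), so the commutative square gives $\nu_G(\varphi(u_S))=\varphi_G(\mu_G(u_S))=0$; by the injectivity just noted, $\lambda_\mu=0$ for every $\mu$ with $\supp\mu\subseteq G$. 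Therefore, whenever $\lambda_\mu\neq0$, no flat avoiding $S$ contains $\supp\mu$, i.e.\ $\langle\supp\mu\rangle\supseteq S$, hence $\langle\supp\mu\rangle\supseteq F$. A rank count then closes the argument: $\supp\mu$ is independent, so $\rk\langle\supp\mu\rangle=|\supp\mu|\le|S|=\rk F\le\rk\langle\supp\mu\rangle$, forcing equality everywhere. Thus $\mu$ is square--free and $S':=\supp\mu$ is a basis of $F$ with $\langle S'\rangle=F=\langle S\rangle$. This gives $\varphi(u_S)=\sum_{S'}\lambda_{u_{S'}}u_{S'}$ with $S'$ ranging over the (no broken circuit) bases of $F$; setting $c_{S'}:=0$ for the remaining independent sets $S'$ with $\langle S'\rangle=\langle S\rangle$ yields the lemma.

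The step I expect to be the main obstacle is the compatibility claim in the first paragraph: that ``being a standard monomial supported in $G$'' is intrinsic, so that $\nu_G(\varphi(u_S))=0$ can be read off coefficient by coefficient. This is exactly what the \emph{universal} Gr\"obner basis property of Theorem \ref{grob} provides, applied to the submatroid on each flat $G$. Once that is in hand the rest is formal, together with the elementary observation that a set containing no broken circuit is independent; in particular no induction on $\rk F$ is needed, and neither Proposition \ref{ranks} nor Lemma \ref{independence} is used.
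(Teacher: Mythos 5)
Your proof is correct, and it reaches the conclusion by the same essential mechanism as the paper: for every flat $G$ with $S\not\subseteq G$, the commutative square gives $\nu_G(\varphi(u_S))=\varphi_G(\mu_G(u_S))=0$, which kills all terms supported in $G$, and a degree/rank count then forces the surviving monomials to be square-free with closure equal to $\langle S\rangle$. The one genuine difference is in how the vanishing $\nu_G(\varphi(u_S))=0$ is converted into vanishing of individual coefficients. The paper starts from an \emph{arbitrary} expansion $\varphi(u_S)=\sum_\sigma c_\sigma u^\sigma$ and uses the section $\b_G$ to pull the relation $\sum_{\supp\sigma\subseteq G}c_\sigma u^\sigma=0$ back to $\OT$, thereby \emph{modifying} the expansion one flat at a time; you instead expand in the standard-monomial basis coming from Theorem \ref{grob} and argue that $\nu_G$ maps standard monomials supported in $G$ injectively to standard monomials of $\OT_G$, so the coefficients are forced to vanish outright. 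Your route trades the sections $\b_F$ for the universal Gr\"obner basis (applied to each restriction), which is a legitimate and arguably cleaner piece of linear algebra; it also has the merit of making explicit the final step (independence of supports of standard monomials plus the rank count yielding square-freeness and $\langle S'\rangle=\langle S\rangle$), which the paper leaves largely implicit. Both arguments rely on $\varphi$ being graded, which is implicit in the cited theorem; you use it in the same place the paper does.
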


\begin{proof}
Start by choosing any constants $c_\sigma$ such that $\varphi(u_S) = \sum_\sigma c_\sigma u^\sigma \in \OT$,
where the sum runs over $\sigma\in\mathbb{N}^n$.
We will show that we can kill all those $c_\sigma$ with $S\not\subset\langle\supp(\sigma)\rangle$ 
without changing the class that it represents in $\OT$.  Since $S$ is independent, this will imply that $u^\sigma = u_{S'}$
for some independent set $S'$ with $\langle S'\rangle = \langle S \rangle$.

Suppose that $F$ is a flat that does not contain $S$.  Then $\mu_F(u_S) = 0$,
so $$\nu_F\circ\varphi(u_S) = \varphi_F\circ\mu_F(u_S) = \varphi_F(0) = 0.$$
This means that $$\sum_{\langle \sigma \rangle\subset F} c_\sigma u^\sigma = 0 \in \OT_F.$$
Applying $\b_F$, this implies that $$\sum_{\langle \sigma \rangle\subset F} c_\sigma u^\sigma = 0 \in \OT.$$
Hence we may assume that $c_\sigma = 0$ for all $\sigma$ such that $\langle \supp(\sigma) \rangle\subset F$.
Since we chose $F$ to be an arbitrary flat not containing $S$, 
this means that we can assume $c_\sigma = 0$ for all $\sigma$ such that $S\not\subset \langle\supp(\sigma)\rangle$.
\end{proof}

\begin{proofmonspan}
By Lemma \ref{independence}, it is sufficient to show that, for all independent $S\subset [n]$, 
$\phi(u_S)$ may be expressed as a linear combination of elements of the form $u_{S'}$ where $S'\subset [n]$ is independent.
This is exactly the content of Lemma \ref{the point}.
\end{proofmonspan}

\subsection{A flat deformation (in the unimodular case)}\label{sec:def}
As in Section \ref{sec:defs}, fix a finite-rank lattice $N$, 
along with a list of (not necessarily distinct) nonzero primitive vectors $a_1,\ldots,a_n\in N$ that span $N$.
Let $V = N\otimes_\Z k$, and consider the associated Orlik-Terao algebra $\OT$.
Again as in Section \ref{sec:defs}, we assume that our collection of vectors is unimodular.
This implies that, for any circuit $C$, there is a decomposition 
$C = C^+\sqcup C^-$ (unique up to swapping $C^+$ and $C^-$) such that $\sum_{i\in C^+}a_i - \sum_{j\in C^-}a_j=0$.
In other words, we may always take the constants $\eta_i$ from the previous section to be $\pm 1$.
We define a {\bf signed circuit} to be a circuit equipped with a choice of decomposition.

\begin{remark}
In Section \ref{sec:defs}, we chose a simple affine hyperplane arrangement, and used this to choose a distinguished
signed circuit for each circuit.  Here we have no such affine arrangement, and there is no distinuguished choice.
\end{remark}

For each signed circuit $C$, let 
$$\fch:= \hbar^{-1}\!\left(\prod_{i\in C^+} u_i\cdot \prod_{j\in C^-} (u_j-\hbar) \;\;-\;\;
\prod_{i\in C^+} (u_i - \hbar)\cdot \prod_{j\in C^-} u_j\right),$$
and consider the $k[\hbar]$-algebra
$$\OTh := k[u_1,\ldots,u_n,\hbar]/\Ih,$$ where 
$$\Ih = \langle \fch\mid \text{$C$ a signed circuit}\rangle.$$
For any $t\in k$, let $I_t\subset k[u_1,\ldots,u_n]$ be the ideal obtained from $\Ih$ by setting $\hbar$ equal to $t$,
and let $$\OT_t := k[u_1,\ldots,u_n]/I_t.$$
In particular, we have $I_0= I$, and therefore $\OT_0$ is equal to the Orlik-Terao algebra $\OT$.

\begin{remark}\label{same OT} 
If $C$ and $C'$ are opposite signed circuits, then $\fch = -f_{C'}$, thus it is enough to pick one signed circuit
for each circuit in the definition of $\Ih$.
In particular, this means that $\Ih$ coincides with the ideal $J_1'$ defined in Corollary \ref{RT} when $k=\C$,
and therefore $\OTh$ coincides with $R'_\bT(\tX)$.  Furthermore, the $\Sym(V)[\hbar]$-algebra structure on $\OTh$
coincides with the $A_\C[\hbar]$-algebra structure on $R'_\bT(\tX)$.
\end{remark}

\begin{theorem}\label{thm-apx}
The algebra $\OTh$ is a free module over $k[\hbar]$, and is thus a flat deformation of $\OT_0$.
\end{theorem}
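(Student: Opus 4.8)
The plan is to exhibit an explicit $k[\hbar]$-module basis for $\OTh$, using the Gröbner basis of Theorem \ref{grob} as a guide. Recall that by Theorem \ref{grob} the polynomials $\{f_{C,0} \mid C \text{ a circuit}\}$ form a universal Gröbner basis for $I = I_0$, and that (up to scale) $\init(f_{C,0}) = u_{\Cbar}$, so that $\SRbc = k[u_1,\ldots,u_n]/\langle u_{\Cbar} \rangle$ is the associated initial algebra; in particular the standard monomials (those not divisible by any $u_{\Cbar}$, equivalently the monomials $u^\sigma$ whose support contains no broken circuit) form a $k$-basis of $\OT_0 = \OT$. I would first check that $\init(f_{C,\hbar}) = u_{\Cbar}$ as well, with respect to the degree-lexicographic order used in Section \ref{sec:spanning}: the polynomial $\hbar f_{C,\hbar}$ is a difference of two products, $\prod_{C^+} u_i \prod_{C^-}(u_j - \hbar) - \prod_{C^+}(u_i-\hbar)\prod_{C^-} u_j$, in which the pure $u$-term $\prod_{i\in C} u_i$ cancels, so every surviving monomial of $f_{C,\hbar}$ is $\pm \hbar^{\ge 1}$ times a proper submonomial of $u_C$ of the form $u_{C\setminus\{i\}}$ up to lower-degree corrections — and among these the lex-largest is $u_{\Cbar}$ exactly as in the classical case. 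Hence, by the standard Gröbner-over-a-PID argument (division works because the leading coefficients are units in $k[\hbar]$), the standard monomials $u^\sigma$ (no broken circuit in the support) span $\OTh$ over $k[\hbar]$.

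The key point is then that these standard monomials are also $k[\hbar]$-\emph{linearly independent} in $\OTh$, which gives freeness. For this I would run a Hilbert-series comparison fiber by fiber. Setting $\hbar = t$ gives $\OT_t = k[u]/I_t$; for $t = 0$ we get $\OT$, whose graded dimension is the $h$-polynomial of the broken circuit complex times $(1-s)^{-\rk N}$ by Proposition \ref{ranks}. For $t \ne 0$, the substitution $u_i \mapsto u_i$ on $C^+$ and $u_j \mapsto t - u_j$ on $C^-$ (a triangular, invertible affine change of coordinates once one works flat by flat, or simply scaling $\hbar$ to $1$ by the $\cs$-grading) identifies $I_t$ with a fixed ideal $I_1$, so $\dim_k \OT_t^{(m)}$ is independent of $t$ for $t \ne 0$; but it is also bounded above by the number of standard monomials in degree $m$, which equals $\dim_k \OT_0^{(m)}$. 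So $\dim_k \OT_t^{(m)} \le \dim_k \OT_0^{(m)}$ for all $t$, and the number of standard monomials in degree $m$ equals $\dim_k \OT_0^{(m)}$; since the standard monomials span $\OTh$ over $k[\hbar]$ and specialize to a spanning set of each $\OT_t$, a vanishing $k[\hbar]$-linear relation among them would have to specialize to a nonzero relation at some $t$ (comparing counts forces the specialized images to be a basis at every $t$, including $t=0$), a contradiction. Therefore the standard monomials are a free $k[\hbar]$-basis of $\OTh$, and $\OTh \otimes_{k[\hbar]} k[\hbar]/(\hbar) \cong \OT_0$, so $\OTh$ is a flat deformation of $\OT_0$.

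The main obstacle, I expect, is making the Gröbner-division argument genuinely rigorous over $k[\hbar]$ rather than over a field: one must confirm that Buchberger's criterion (all $S$-polynomials reduce to zero) still holds for $\{f_{C,\hbar}\}$, equivalently that no new relations are created upon adjoining $\hbar$. The cleanest route is probably to avoid reproving Buchberger and instead argue purely by specialization and dimension count: the standard monomials span $\OTh$ over $k[\hbar]$ (this only needs that their images span after specialization, plus graded Nakayama over the local ring $k[\hbar]_{(\hbar)}$ together with finite generation), and then freeness follows from the fact that $\dim_k (\OTh/\hbar\,\OTh)^{(m)} = \dim_k \OT_0^{(m)}$ equals the number of standard monomials in degree $m$ — so the spanning set is already minimal mod $\hbar$, hence a basis. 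This sidesteps the $S$-polynomial bookkeeping entirely; the only residual care needed is the identification $\dim_k \OT_t = \dim_k \OT_0$ for $t \ne 0$, which is immediate from the $\cs$-equivariance that rescales $\hbar$.
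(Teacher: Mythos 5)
There is a genuine gap, and it sits exactly at the one point where the theorem has real content. Your division argument correctly shows that the standard monomials (those whose support contains no broken circuit) span $\OTh$ over $k[\hbar]$ — the leading coefficients $\pm1$ of the $\fch$ are units, so reduction works — and hence that they span each fiber $\OT_t$ over $k$. This gives the upper bound $\dim_k\OT_t^{(m)}\le\dim_k\OT_0^{(m)}$ on graded pieces. But freeness requires the matching \emph{lower} bound for $t\ne 0$ (equivalently, that $\OTh$ has no $\hbar$-torsion, i.e.\ that $\Ih$ is saturated with respect to $\hbar$), and neither of your two routes delivers it. ``Comparing counts forces the specialized images to be a basis at every $t$'' is circular: your counts only bound $\dim_k\OT_t^{(m)}$ from above, and a priori the generic fiber could be strictly smaller than the special fiber — that is precisely the failure mode of flatness you are trying to rule out. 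Likewise, ``the spanning set is minimal mod $\hbar$, hence a basis'' is false for graded $k[\hbar]$-modules: $k[\hbar]\oplus k[\hbar]/(\hbar)$ is minimally generated by two elements whose images form a basis mod $\hbar$, yet it is not free; $\hbar$-torsion is exactly what graded Nakayama cannot see. Your first route — verifying Buchberger's criterion for $\{\fch\}$ over $k[\hbar]$ — would genuinely suffice, but you explicitly decline to carry it out, and the proposed sidestep does not replace it.

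The paper closes this gap with a geometric input for which your proposal has no substitute: it proves $\dim\Spec\OT_1\ge\rk N$ directly, by introducing the binomial algebra $\widetilde{\OT}_1$ in $2n$ variables $u_i,v_i$ and exhibiting a finite-to-one map from a torus of dimension $\rk N+n$ to $\Spec\widetilde{\OT}_1$ (this is where unimodularity enters), then cutting down by the $n$ equations $v_i=u_i-1$. Combined with the reverse inequality and the fact that $\Spec\OT_0$ is reduced and irreducible of dimension $\rk N$, this shows that the $\hbar$-saturation $\Ih'$ of $\Ih$ agrees with $\Ih$ modulo $\hbar$, whence $\Ih'=\Ih$. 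To repair your proof you need one of: this dimension count, a completed Buchberger verification for the $\fch$, or some other independent argument that $\dim_k\OT_1^{(m)}\ge\dim_k\OT_0^{(m)}$.
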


\begin{proof}
Let $$\Ih' := \{f\mid \hbar^kf\in\Ih\; \text{for some $k\in\N$}\}\and \OTh' := k[u_1,\ldots,u_n,\hbar]/\Ih';$$
then $\OTh'$ is a flat deformation of $\OT'_0$.
It is clear that $\OT_1 = \OT'_1$ and that we have a surjection $\OT_0\twoheadrightarrow\OT'_0$,
and therefore a closed inclusion $\Spec\OT'_0\subset\Spec\OT_0$.
Theorem \ref{thm-apx} is equivalent to the statement that this inclusion is an isomorphism.  Since $\Spec\OT_0$
is reduced and irreducible of dimension $\rk N$, it is sufficient to show that $\dim\Spec\OT'_0 = \rk N$.
Since $\OTh'$ is flat, this is equivalent to showing that $\dim\Spec\OT_1 = \rk N$.
Since we already know one inequality, we need only show that $\dim\Spec\OT_1 \geq \rk N$.

Consider the ideal $\tilde{I}_1\subset k[u_1,v_1,\ldots,u_n,v_n]$ generated by elements of the form
$$\prod_{i\in C^+} u_i\cdot \prod_{j\in C^-} v_j \;\;-\;\;
\prod_{i\in C^+} v_i\cdot \prod_{j\in C^-} u_j$$
for each signed circuit $C$, and let $\widetilde\OT_1 := k[u_1,v_1,\ldots,u_n,v_n]/\tilde{I}_1$.
Since $\Spec\OT_1$ is cut out of $\Spec\widetilde\OT_1$ by the $n$ equations $v_i = u_i - 1$
and intersects the regular locus of $\Spec\widetilde\OT_1$ nontrivially, 
it is sufficient to show that $\dim\Spec\widetilde\OT_1\geq \rk N + n$.

Consider the lattice $L := N\oplus\Z^n$
and the elements $r_i = (a_i,e_i) \in L$ and $s_i = (-a_i,e_i) \in L$.
Define a map from $\widetilde\OT_1$ to $k\{q^\ell\mid\ell\in L\}$ by sending $u_i$ to $q^{r_i}$ and $v_i$ to $q^{s_i}$.  
Since $\{r_1,s_1,\ldots,r_n,s_n\}$ spans a finite index sublattice of $L$, the induced map from the torus
$T_L := \Spec k\{q^\ell\mid\ell\in L\}$ to $\Spec\widetilde\OT_1$ is finite-to-one.  Since $\dim T_L = \rk N + n$,
this completes the proof.
\end{proof}

\begin{remark}
The quotient $$AOT_0 := k[u_1,\ldots,u_n]\;\Big{/}\;I+\langle u_1^2,\ldots,u_n^2\rangle$$
is called the {\bf Artinian Orlik-Terao algebra}.
Moseley \cite[4.5]{Moseley} studied the ring
$$AOT_\hbar := k[u_1,\ldots,u_n,\hbar]\;\Big{/}\;\Ih+\langle u_1(u_1-\hbar),\ldots,u_n(u_n-\hbar)\rangle,$$
and showed that $AOT_\hbar$ is a flat deformation of $AOT_0$ into the Varchenko-Gelfand algebra.
This is the Artinian analogue of Theorem \ref{thm-apx}, but neither result follows from the other.
\end{remark}

\bibliography{./symplectic}
\bibliographystyle{amsalpha}

\end{document}